\newsavebox{\@brx}
\newcommand{\llangle}[1][]{\savebox{\@brx}{\(\m@th{#1\langle}\)}%
  \mathopen{\copy\@brx\kern-0.5\wd\@brx\usebox{\@brx}}}
\newcommand{\rrangle}[1][]{\savebox{\@brx}{\(\m@th{#1\rangle}\)}%
  \mathclose{\copy\@brx\kern-0.5\wd\@brx\usebox{\@brx}}}
\newtheorem{theorem}{Theorem}[section]
\newtheorem{corollary}[theorem]{Corollary}
\newtheorem{lemma}[theorem]{Lemma}
\newtheorem{proposition}[theorem]{Proposition}
\theoremstyle{definition}
\newtheorem{conjecture}[theorem]{Conjecture}
\newtheorem{problem}[theorem]{Problem}
\newtheorem{definition}[theorem]{Definition}
\newtheorem{example}[theorem]{Example}
\newtheorem{remark}[theorem]{Remark}
\numberwithin{equation}{subsection}
\newtheorem*{ack}{Acknowledgement}
\newcommand{\Brun}{\mathrm{Brun}}
\newcommand{\id}{\mathrm{id}}
\begin{document}
\title{Brunnian planar braids and simplicial groups}

\author{Valeriy G. Bardakov} 
\address{Sobolev Institute of Mathematics, Acad. Koptyug ave. 4, 630090 Novosibirsk, Russia.}
\address{Novosibirsk State Agrarian University, Dobrolyubova str., 160, 630039 Novosibirsk, Russia.}
\address{Regional Scientific and Educational Mathematical Center of Tomsk State University, Lenin ave. 36, 634009 Tomsk, Russia.}
\email{bardakov@math.nsc.ru}

\author{Pravin Kumar}
\address{Department of Mathematical Sciences, Indian Institute of Science Education and Research (IISER) Mohali, Sector 81, SAS Nagar, P O Manauli, Punjab 140306, India} 
\email{pravin444enaj@gmail.com} 

\author{Mahender Singh}
\address{Department of Mathematical Sciences,Indian Institute of Science Education and Research (IISER) Mohali, Sector 81, SAS Nagar, P O Manauli, Punjab 140306, India} 
\email{mahender@iisermohali.ac.in} 

\subjclass[2020]{Primary 20F55, 20F36; Secondary 18N50}
\keywords{Brunnian twin, Cohen twin, doodle, homotopy group, Milnor construction, pure twin group, simplicial group, twin group}

\begin{abstract}
Twin groups are planar analogues of Artin braid groups and play a crucial role in the Alexander-Markov correspondence for the isotopy classes of immersed circles on the 2-sphere without triple and higher intersections. These groups admit diagrammatic representations, leading to maps obtained by the addition and deletion of strands. This paper explores Brunnian twin groups, which are subgroups of twin groups composed of twins that become trivial when any of their strands are deleted. We establish that Brunnian twin groups consisting of more than two strands are free groups. Furthermore, we provide a necessary and sufficient condition for a Brunnian doodle on the 2-sphere to be the closure of a Brunnian twin. Additionally, we delve into two generalizations of Brunnian twins, namely, $k$-decomposable twins and Cohen twins, and prove some structural results about these groups. We also investigate a simplicial structure on pure twin groups that admits a simplicial homomorphism from Milnor's construction of the simplicial 2-sphere. This gives a possibility to provide a combinatorial description of homotopy groups of the 2-sphere in terms of pure twins.
\end{abstract}

\maketitle
\section{Introduction}
The twin group, or the planar braid group $T_n$ on $n \ge 2$ strands, is a right angled Coxeter group generated by $n-1$ involutions that admit only far commutativity relations. These groups appeared in the work of Khovanov \cite{MR1386845} on real $K(\pi, 1)$ subspace arrangements and were further investigated in \cite{MR1370644}. Twin groups have a geometrical interpretation  similar to the one for Artin braid groups \cite{MR1370644, MR1386845}. We fix parallel lines $y = 0$ and $y = 1$ on the plane $\mathbb{R}^2$ with $n$ marked points on each line. Consider the set of configurations of $n$ strands in the strip $\mathbb{R} \times [0,1]$ connecting the $n$ marked points on the line $y=1$ to those on the line $y=0$ such that each strand is monotonic and no three strands have a point in common. Two such configurations are equivalent if one can be deformed into the other by a homotopy of strands, keeping the end points fixed throughout the homotopy. Such an equivalence class is called a \textit{twin}. Placing one twin on top of another and rescaling the interval turns the set of all twins on $n$ strands into a group isomorphic to $T_n$. The generators $t_i$ of $T_n$ can be geometrically represented by configurations as shown in Figure~\ref{fig1}. 
 \begin{figure}[!ht]
 \begin{center}
\includegraphics[height=3cm]{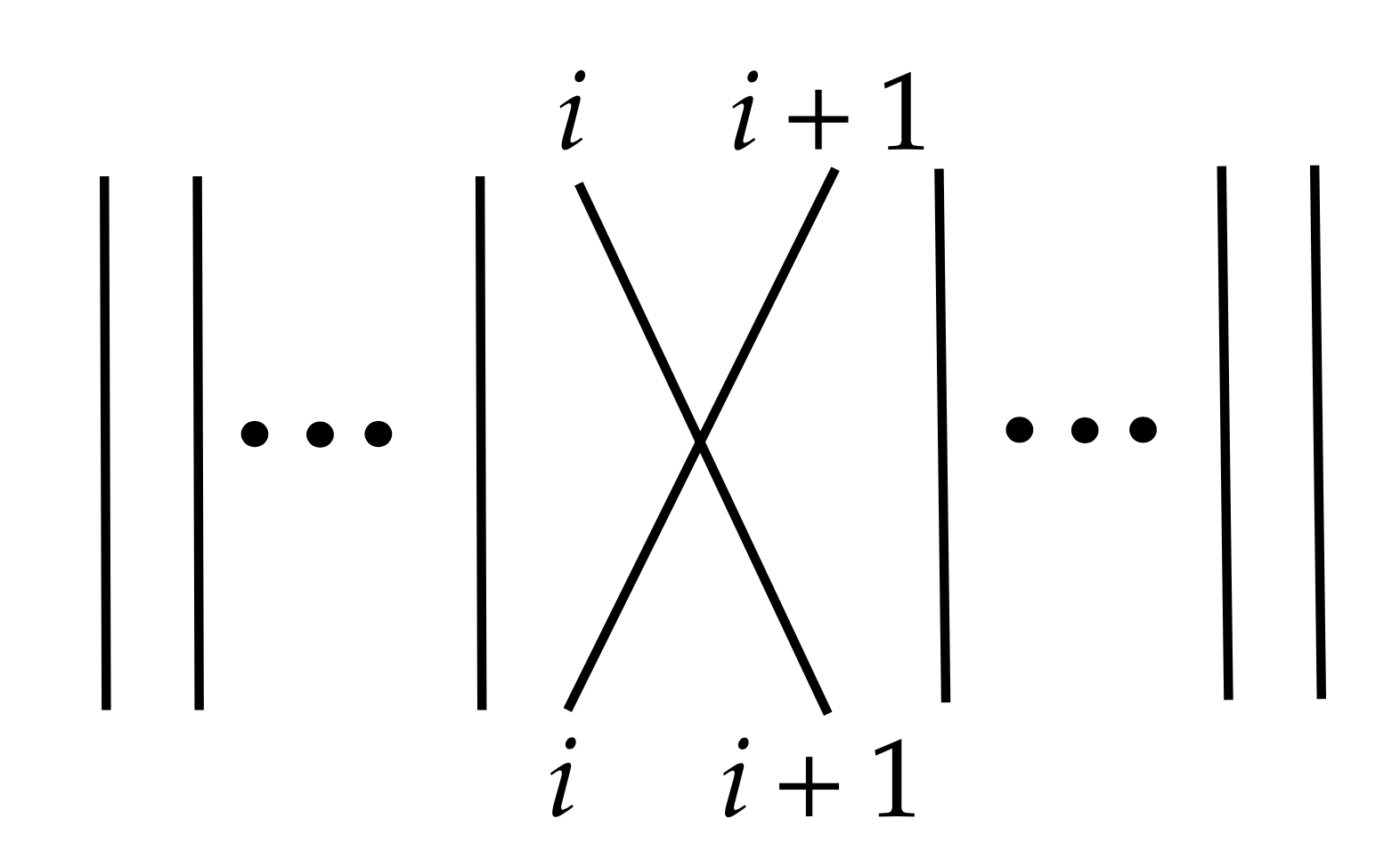}
\end{center}
\caption{The generator $t_i$ of $T_n$.} \label{fig1}
\end{figure}

Analogous to classical knot theory, it is evident that the closure of a twin gives a {\it doodle} on the 2-sphere. In general, a doodle on a closed surface is a collection of finitely many piecewise-linear closed curves without triple intersections. These objects first appeared in the work of Fenn and Taylor \cite{MR0547452}.  In \cite{MR1370644}, Khovanov proved that every oriented doodle on the 2-sphere is the closure of a twin.  A Markov theorem for doodles on the 2-sphere has been established by Gotin \cite{Gotin}, although the idea has been implicit in \cite{Khovanov1990}.  These constructions have been generalised by Bartholomew-Fenn-Kamada-Kamada \cite{MR3876348}, where they consider a collection of immersed circles in closed oriented surfaces of arbitrary genus.
\par 
The pure twin  group, denoted as $PT_n$, is defined as the kernel of the natural homomorphism from the twin group $T_n$ to the symmetric group $S_n$, which maps the twin $t_i$ to the transposition $(i, i + 1)$. A nice topological interpretation of $PT_n$ is  known due to Khovanov. Consider the space
$$
X_n = \mathbb{R}^n \setminus \big\{ (x_1, \ldots, x_n) \in \mathbb{R}^n  \, \mid \, x_i = x_j = x_k~\textrm{for}~ i \not= j \not= k \not= i \big\},
$$
which is the complement of triple diagonals $x_{i} = x_{j} = x_{k}$. In \cite{MR1386845}, Khovanov proved that the fundamental group $\pi_1(X_n)$ is isomorphic to $PT_n$. Prior to this, Bj\"{o}rner and Welker \cite{MR1317619} had investigated the cohomology of these spaces, establishing that each $H^i(X_n, \mathbb{Z})$ is free.
\par 

Simplicial structures on braid groups are connected with homotopy groups of some manifolds \cite{MR2188127,MR3152716,MR3180740, MR2551462, MR3035327}. Notably, they provide a description of elements in homotopy groups of the $2$-sphere in terms of Brunnian braids \cite{MR2188127}, with a generalization to higher dimensional spheres \cite{MR3035327}. A set of generators for the Brunnian braid group of a surface other than the 2-sphere and the projective plane has been provided in \cite{MR2966697}. Furthermore, Brunnian subgroups of mapping class groups have been considered in \cite{MR3108834}. In this paper, we explore  simplicial structures on pure twin groups. The geometrical interpretation of elements in twin groups allows us to define face and degeneracy maps obtained by the deletion and addition of strands, thereby transforming the family of pure twin groups into a simplicial group. We adopt the approach introduced by Cohen and Wu in \cite{MR2853222} for Artin pure braid groups.
\par

The paper is organised as follows. In Section \ref{section 2}, we prove that the natural maps of deletion and addition of strands turn the sequence $\{ T_n \}_{n \ge 1}$ into a  bi-$\Delta$-set, whereas the sequence $\{PT_n\}_{n \ge 1}$ is turned into a  bi-$\Delta$-group (Proposition \ref{bi-delta-set on tn}). In Section \ref{section 3}, we investigate Brunnian twins, which are twins that become trivial when any one of their strands is removed. We prove that the group $\Brun(T_n)$ of Brunnian twins on $n$ strands is free for $n\geq 3$ (Proposition \ref{brun tn free}), and give an infinite free generating set for $\Brun(T_4)$ (Theorem \ref{brun t4}). In Section \ref{section 4}, we consider two generalisations of Brunnian twins, namely,  $k$-decomposable twins and Cohen twins. A twin is $k$-decomposable if it becomes trivial after removing any $k$ of its strands. We give a complete description of $k$-decomposable twins on $n \ge 4$ strands (Proposition \ref{prop-D2}). A twin on $n$ strands is said to be Cohen if the twins obtained by removing any one of its strands are all the same. We give a characterisation for a twin to be Cohen  (Theorem \ref{P1pure}).  In Section \ref{section 5}, we consider Brunnian doodles on the 2-sphere, and prove that an $m$-component Brunnian doodle on the 2-sphere is the closure of a Brunnian twin if and only if its twin index is $ m$ (Theorem \ref{brunnian doodle twin index}). In Section \ref{section 6}, we observe that pure twin groups admit the structure of a simplicial group $SPT_*$. We relate it with the well-known Milnor's construction for simplicial spheres by establishing a homomorphism $\Theta : F[S^2]_* \longrightarrow  SPT_*$ of simplicial groups. We also identify some low degree terms of the image of $\Theta$ as free groups (Theorem \ref{k3 and k4}). A complete description of the image of $\Theta$ gives a possibility to provide a combinatorial description of homotopy groups of the 2-sphere in terms of pure twins.
\bigskip


\section{Bi-$\Delta$-set structure on twin and pure twin groups}\label{section 2}

For $n \ge 2 $, the {\it twin group} $T_n$ on $n$ strands is generated by $\{t_1, \ldots, t_{n-1} \}$ and it is defined by the following relations:
\begin{eqnarray*}
 t_{i}^2 & =& 1  \quad \mbox{for} \quad 1 \le i \le n-1
\end{eqnarray*}
and
\begin{eqnarray*}
  t_{i} t_{j} & =& t_{j} t_{i} \quad \mbox{for} \quad |i-j|\geq 2.
\end{eqnarray*}
Clearly, each $T_n$ is a right angled Coxeter group. Further, there is a surjective homomorphism  $\nu: T_n \to S_n$, that sends the generator $t_i$ to the transposition $\tau_i=(i,i+1)$ in the symmetric group $S_n$. It's kernel, denoted as $PT_n$, is called the pure twin group. It is not difficult to see that $PT_2$ is trivial and $PT_3$ is the infinite cyclic group generated by the pure twin $(t_1 t_2)^3$ \cite{MR4027588}. Figure \ref{pure twin in pt3} represents the pure twin  $(t_1 t_2)^3$.

 \begin{figure}[!ht]
 \begin{center}
\includegraphics[height=4.5cm]{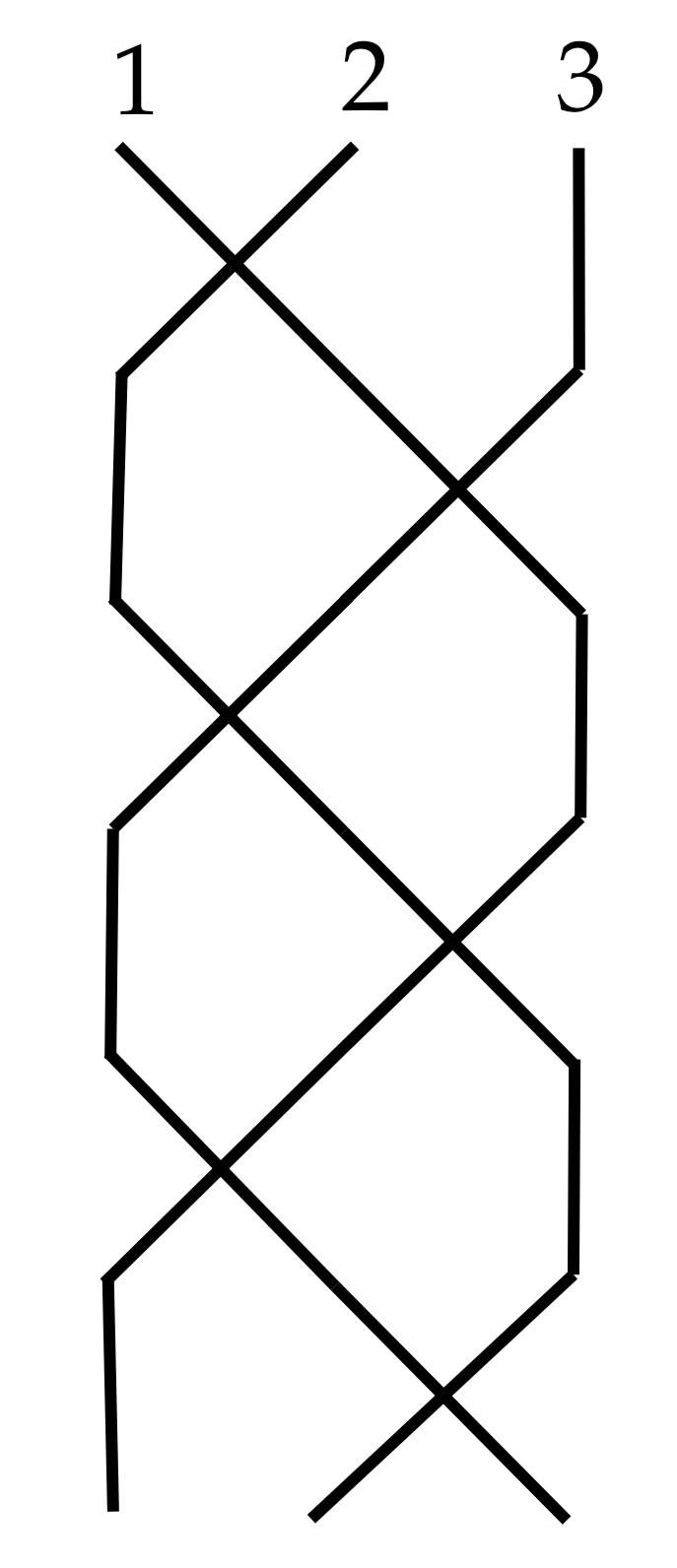}
\end{center}
\caption{The pure twin $(t_1 t_2)^3$.} 
\label{pure twin in pt3}
\end{figure}

\par

Let us consider the following definitions \cite[p.16]{MR2203532}.

\begin{definition}
A sequence of sets $\left\{G_n\right\}_{n \geq 0}$ is called a {\it $\Delta$-set} if there are maps $d_i: G_n \rightarrow G_{n-1}$ for each $0 \leq i \leq n$
such that
\begin{equation}\label{face map identity}
d_j d_i=d_i d_{j+1}
\end{equation}
 for all $j \geq i$. The maps $d_i$ are called face maps. If each $G_n$ is a group and each face map is a group homomorphism, then $\left\{G_n\right\}_{n \geq 0}$ is called a $\Delta$-group.
\end{definition}

\begin{definition}
A sequence of sets $\left\{G_n\right\}_{n \geq 0}$ is called a {\it bi-$\Delta$-set} if there are face maps $d_i: G_n \rightarrow G_{n-1}$ and coface maps $d^i: G_{n-1} \rightarrow G_n$ for each $0 \le i \le n$ such that the following identities  hold:
\begin{enumerate}
\item $d_j d_i=d_i d_{j+1}$ for  $j \geq i$,
\item $d^j d^i=d^{i+1} d^j$ for  $j \leq i$,
\item $ d_j d^i= d^{i-1} d_j$ for  $j<i$,
\item  $ d_j d^i=\id$ for $j=i$, 
\item  $ d_j d^i=  d^i d_{j-1}$ for $ j>i$.
\end{enumerate}
Moreover, if each $G_n$ is a group and each face and coface map is a group homomorphism, then $\left\{G_n\right\}_{n \geq 0}$ is called a bi-$\Delta$-group.
\end{definition}

We define a bi-$\Delta$-set structure on  twin groups that would induce a  bi-$\Delta$-group structure  on pure twin groups. For geometrical reasons, we take $G_n=T_{n+1}$ or $PT_{n+1}$ for each $n \ge 0$. For each $0 \leq i \leq n$, define the map $$d_i: T_{n+1}\to T_{n}$$ that deletes the $(i+1)$-th strand from the diagram of a twin on $n+1$ strands. Note that, $d_i$ is not a group homomorphism, but it  satisfies
\begin{equation}\label{di not homo on tn}
d_i(u w)= d_i(u)d_{\nu(u)(i+1)-1}(w)
\end{equation}
for all $u, w \in T_{n+1}$, where $\nu: T_{n+1} \to S_{n+1}$ is the natural surjection. On the other hand, we have $d_i(PT_{n+1})\subseteq PT_{n}$ for each  $0 \leq i \leq n$.  Further, it follows from \eqref{di not homo on tn} that $d_i: PT_{n+1} \to PT_{n}$ is a surjective group homomorphism  for each  $0 \leq i \leq n$. 

\begin{remark}
The  homomorphism $d_i: PT_{n+1} \to PT_{n}$ has an alternative interpretation. Consider the space
$$
X_{n}=\mathbb{R}^{n} \setminus \left\{\left(x_1, \ldots, x_n\right) \in \mathbb{R}^{n}~ \mid~ x_{i}=x_{j}=x_{k}, \quad i \neq j \neq k \neq i\right\},
$$
which is the complement of triple diagonals $x_{i}=x_{j}=x_{k}$ in $\mathbb{R}^{n} $. For each $1 \leq i \leq n+1$, let
$$
(p_i)_{\#}: \pi_1(X_{n+1}) \rightarrow \pi_1(X_n)
$$
be the group homomorphism induced by the coordinate projection $p_i:X_{n+1} \rightarrow X_n$, where 
$$p_i\left(x_1, \ldots, x_{n+1}\right)=\left(x_1, \ldots, x_{i-1}, x_{i+1}, \ldots, x_{n+1}\right).
$$
By \cite[Proposition 3.1]{MR1386845}, we identify $\pi_1(X_n)$ with $PT_n$, and observe that $(p_i)_{\#}=d_{i-1}$ for each $1 \leq i \leq n+1$.
\end{remark}
 
\par
In analogy with \cite[Example 1.2.8]{MR2203532}, we define a bi-$\Delta$-group structure on the family of twin groups.

\begin{proposition}\label{bi-delta-set on tn}
Consider the sequence of groups $\{ T_n \}_{n \ge 1}$. For each  $0 \leq i \leq n$, let $d_i: T_{n+1}\to T_{n}$ be the map satisfying \eqref{di not homo on tn} and $d^i: T_n \rightarrow T_{n+1}$ the map defined by
$$
d^i (t_j)=\left\{\begin{array}{ccc}
t_j & \text { for } \quad j<i, \\
t_{i+1} t_{i} t_{i+1} & \text { for }\quad j=i, \\
t_{j+1} & \text { for } \quad j>i.
\end{array}\right.
$$
Then  $\{ T_n, d_i, d^i \}_{n \ge 1}$ is a  bi-$\Delta$-set and $\{ PT_n, d_i, d^i \}_{n \ge 1}$ is a  bi-$\Delta$-group.
\end{proposition}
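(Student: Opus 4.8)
The plan is to verify first that each coface map $d^i$ is a well-defined group homomorphism, then to check the five bi-$\Delta$ identities as maps of sets on $\{T_n\}$, and finally to observe that restriction to the pure twin subgroups upgrades the bi-$\Delta$-set to a bi-$\Delta$-group. For the well-definedness of $d^i: T_n \to T_{n+1}$, I would confirm that the assignment on generators respects the defining relations of $T_n$. The involution relations are immediate; for example $(t_{i+1} t_i t_{i+1})^2 = 1$ reduces to $t_{i+1}^2 = t_i^2 = 1$. For the far-commutativity relations $t_j t_k = t_k t_j$ with $|j-k| \ge 2$ one runs through the finitely many cases determined by the position of $i$ relative to $j$ and $k$; in every case the images are generators (or the length-three word $t_{i+1} t_i t_{i+1}$) whose indices remain at distance $\ge 2$, so far commutativity is preserved.

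For the bi-$\Delta$ identities I would treat the three types separately. Identity (1) concerns only the face maps and is the geometric statement that $d_j d_i$ and $d_i d_{j+1}$ delete the same unordered pair of strands from a twin on $n+1$ strands; tracking the renumbering of strands after the first deletion shows that, whenever $j \ge i$, both composites remove the strands originally in positions $i+1$ and $j+2$. Identity (2) involves only the coface maps, which are honest homomorphisms, so it suffices to evaluate $d^j d^i$ and $d^{i+1} d^j$ on each generator $t_m$. This is a finite case analysis, and in the delicate subcase $m = i$ the two sides agree only after invoking the Coxeter relations (for instance $t_{i+2}$ commutes with $t_i$ together with $t_{i+2}^2 = 1$), so the computation genuinely uses the group structure rather than matching words verbatim.

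The mixed identities (3), (4) and (5) are the crux, since here a coface homomorphism is composed with a face map $d_j$ that is \emph{not} a homomorphism on $T_n$; consequently one must verify them on arbitrary words, not merely on generators. I would carry this out as an identity of set maps using the twisted product rule \eqref{di not homo on tn}, keeping careful track of the permutation $\nu(u)$ that governs each index shift. Geometrically these identities record the commutation of ``doubling strand $i+1$'' with ``deleting strand $j+1$'': when the doubled and deleted strands are distinct one recovers the appropriately shifted face and coface maps on the other side, whereas for $j = i$ deleting one copy of the doubled strand restores the original twin, giving $d_i d^i = \id$. For instance, applying \eqref{di not homo on tn} to $d_i(t_{i+1} t_i t_{i+1})$ with $\nu(t_{i+1}) = (i+1, i+2)$ yields $d_i(t_{i+1}) = 1$, and then $d_{i+1}(t_i t_{i+1}) = t_i$, so $d_i d^i(t_i) = t_i$ as required. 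The main obstacle is precisely this bookkeeping, namely confirming that the permutation tracking in \eqref{di not homo on tn} reproduces the shifted indices on the right-hand side for every word.

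Finally, the bi-$\Delta$-group statement for $\{PT_n\}$ follows formally. Having been established as maps of sets on $\{T_n\}$, the five identities restrict to the subsets $PT_n$; it remains to observe that there all the structure maps are homomorphisms. Indeed, $d_i: PT_{n+1}\to PT_n$ is a homomorphism by \eqref{di not homo on tn}, since on a pure twin $u$ the permutation $\nu(u)$ is trivial and the index shift disappears, while each $d^i$ is already a homomorphism on $T_n$ and carries $PT_n$ into $PT_{n+1}$ because doubling a strand of a pure twin again produces a pure twin. Hence $\{PT_n, d_i, d^i\}_{n\ge 1}$ is a bi-$\Delta$-group, completing the proof.
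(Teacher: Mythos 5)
Your proposal is correct and follows essentially the same route as the paper: verify that each $d^i$ respects the Coxeter relations of $T_n$, check identity (2) by direct evaluation on generators, appeal to the geometric interpretation of strand deletion and doubling for identities (1) and (3)--(5), and then restrict to the pure twin subgroups where \eqref{di not homo on tn} makes the face maps homomorphisms. Your explicit remark that the mixed identities must be verified on arbitrary words (since $d_j$ is not a homomorphism on $T_n$) is a point the paper passes over silently, but it does not change the argument.
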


\begin{proof}
For each $0\leq i \leq n$, the map $d_i: T_{n+1} \rightarrow T_n$ clearly satisfies \eqref{face map identity}. Next, consider the maps
$$
d^i (t_j)=\left\{\begin{array}{ccc}
t_j & \text { for } \quad j<i, \\
t_{i+1} t_{i} t_{i+1} & \text { for }\quad j=i, \\
t_{j+1} & \text { for } \quad j>i.
\end{array}\right.
$$
See Figure \ref{Geometrical interpretation of a coface map} for a geometrical interpretation of these coface maps. A direct computation yields
$$
\begin{aligned}
d^j d^i (t_k) = d^{i+1} d^j(t_k)&=\left\{\begin{array}{ccc}
t_k & \text { for } \quad k<j\leq i, \\
t_{k+2} t_{k+1} t_{k}t_{k+1} t_{k+2} & \text { for }\quad j=k=i, \\
t_{k+1} t_{k} t_{k+1} & \text { for }\quad j=k < i, \\
t_{k+2} t_{k+1} t_{k+2} & \text { for }\quad j< k = i, \\
t_{k+1} & \text { for } \quad j < k < i, \\
t_{k+2} & \text { for } \quad j \leq i < k,
\end{array}\right.\\
\end{aligned}
$$
 for all $j \leq i$. This proves the identity (2). The identities (3)-(5) follows from the geometrical interpretation of $d_i$ and $d^i$. Hence, $\{ T_n, d_i, d^i \}_{n \ge 1}$ is a  bi-$\Delta$-set.
 \par
 
 We already noticed that, for each $0 \leq i \leq n$,  $d_i(PT_{n+1})\subseteq PT_{n}$ and  $d_i:PT_{n+1} \to PT_{n}$ is a group homomorphism. The inclusion $d^{i}(PT_{n})\subseteq PT_{n+1}$ follows from the geometrical interpretation of the map $d^i$. Alternatively, for each $n \ge 1$, let  $\eta^i:S_n\to S_{n+1}$ be the map defined by 
 $$
\eta^i (\tau_j)=\left\{\begin{array}{ccc}
\tau_j & \text { for } \quad j<i, \\
\tau_{i+1} \tau_{i} \tau_{i+1} & \text { for }\quad j=i, \\
\tau_{j+1} & \text { for } \quad j>i.
\end{array}\right.
$$
As with the case of $d^i$, each $\eta^i$ satisfies the far commutativity and involutory relations of generators of $S_n$. For braid relations, we see that

$$
\begin{aligned}
\eta^i(\tau_k)\eta^i(\tau_{k+1})\eta^i(\tau_k)&=\left\{\begin{array}{ccc}
\tau_k \tau_{k+1} \tau_{k} & \text { for } \quad k+1 < i, \\
 \tau_{k}\tau_{k+1}\tau_{k+2}\tau_{k+1}\tau_{k} & \text { for } i = k, k+1, \\
\tau_{k+1}\tau_{k+2}\tau_{k+1} & \text { for }\quad k > i,
\end{array}\right.\\
&=\eta^i(\tau_{k+1})\eta^i(\tau_{k})\eta^i(\tau_{k+1}),
\end{aligned}
$$
and hence each $\eta^i$ is a group homomorphism. Then the inclusion $d^{i}(PT_{n})\subseteq PT_{n+1}$ also follows from the commutativity of the following diagram
$$
\begin{tikzcd}
	{T_n} & {T_{n+1}} \\
	{S_n} & {S_{n+1}}
	\arrow["{d^{i}}", from=1-1, to=1-2]
	\arrow["{\eta^i}", from=2-1, to=2-2]
	\arrow["\nu"', from=1-1, to=2-1]
	\arrow["\nu", from=1-2, to=2-2].
\end{tikzcd}
$$
Finally, we prove that each $d^i$ is group homomorphism at the level of twin groups itself. Clearly,  $(d^i(t_k))^2=1$ for all $i$ and $k$. Further, for $k<\ell$ with $|k-\ell|\geq 2$, we have
$$
\begin{aligned}
d^i(t_k)d^i(t_\ell)&=\left\{\begin{array}{ccc}
t_kt_\ell & \text {for} \quad \ell < i, \\
t_kt_{\ell+1}t_\ell t_{\ell+1} & \text {for} \quad \ell=i, \\
t_{k} t_{\ell+1} & \text {for}\quad  k < i<\ell, \\
t_{k+1} t_{k} t_{k+1}t_{\ell+1} & \text {for}\quad k = i, \\
t_{k+1}t_{\ell+1} & \text {for} \quad i < k,
\end{array}\right.\\
&=d^i(t_\ell)d^i(t_k).
\end{aligned}
$$
This proves that $\{ PT_n, d_i, d^i \}_{n \ge 1}$ is a  bi-$\Delta$-group.
 \end{proof}

 \begin{figure}[H]
\includegraphics[width=4.5cm]{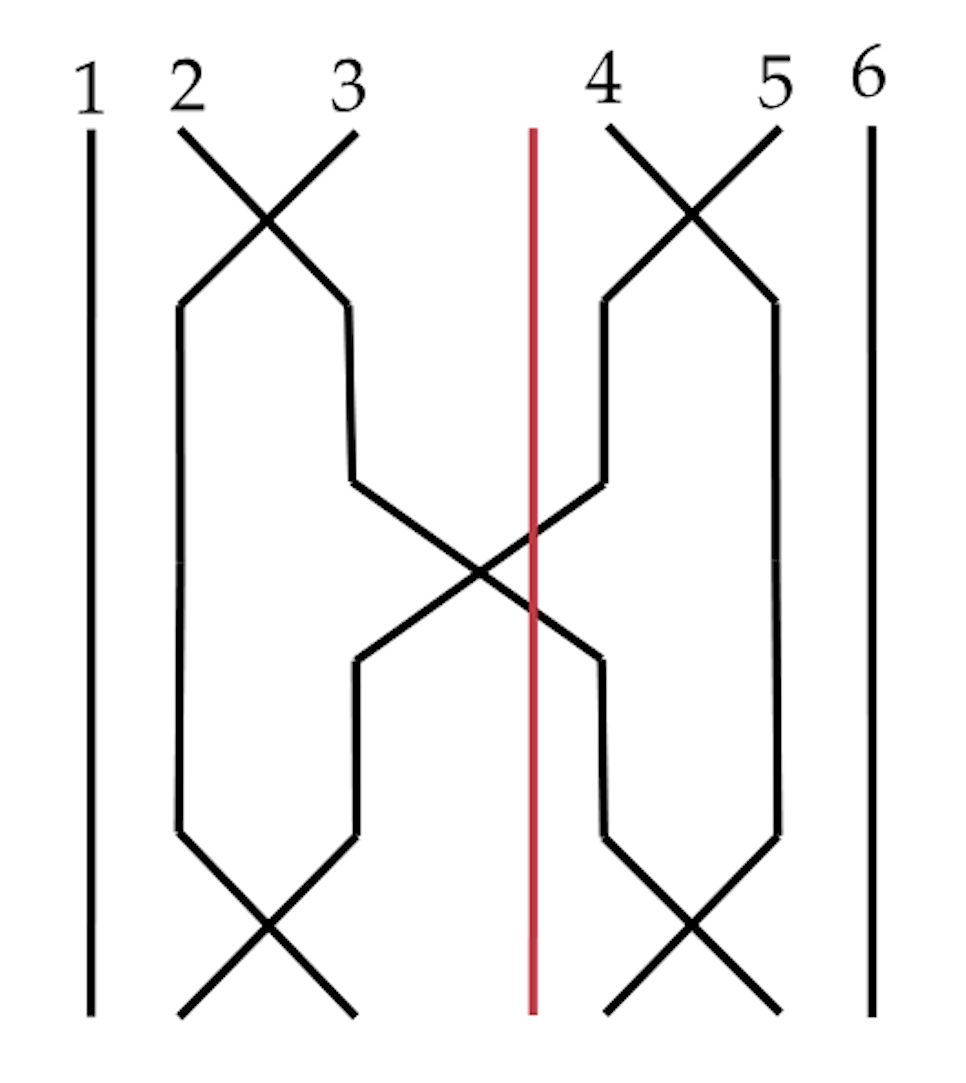}
\caption{The geometrical interpretation of a coface map.}
\label{Geometrical interpretation of a coface map}
\end{figure}

 \begin{remark}\label{another definition of di}
For each $0\leq i \leq n$, we can also define the coface maps  $d^i: T_n \rightarrow T_{n+1}$ by
$$
d^i (t_j)=\left\{\begin{array}{ccc}
t_j & \text { for } \quad j<i, \\
t_{i} t_{i+1} t_{i} & \text { for }\quad j=i, \\
t_{j+1} & \text { for } \quad j>i.
\end{array}\right.
$$
It can be verified that the analogue of Proposition \ref{bi-delta-set on tn} holds with these coface maps.
\end{remark} 
\bigskip

We now use the bi-$\Delta$-set structure on $\{T_n\}_{n \ge 1}$ to give a new presentation for $T_{n+1}$. We use the coface maps $d^i$ as defined in Proposition \ref{bi-delta-set on tn}.
\begin{proposition}
Let $q_k:=d^{n}d^{n-1}\cdots d^{k}(t_{k})$ for $1 \le k\le n-1$ and $q_n:=d^{n-2}(t_{n-1})$. Then $T_{n+1}$ admits a presentation with generating set $\{q_1, \ldots, q_n\}$ and the following defining relations:
\begin{enumerate}
\item $q_i^2=1$ for all $i$,
\item $[ q_{i+1} q_i q_{i+1}, q_n] = 1$ for $i < n-1$,
\item $[q_{i+1}q_iq_{i+1},q_{j+1}q_jq_{j+1}]$ for $|i-j|>2$ and $i,j \leq n-1$.
\end{enumerate}
\end{proposition}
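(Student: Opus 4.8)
The plan is to show that the passage from the standard Coxeter generators $t_1,\dots,t_n$ of $T_{n+1}$ to the elements $q_1,\dots,q_n$ is an invertible change of generators, and then to transport the defining relations of $T_{n+1}$ across it by Tietze transformations. The engine of the whole argument is a single recursion. Write $q_k = \delta\big(d^k(t_k)\big)$, where $\delta$ denotes the composite of all the coface maps occurring in $q_k$ except the innermost one $d^k$, so that $q_{k+1}=\delta(t_{k+1})$. By Proposition \ref{bi-delta-set on tn} each coface map, and hence $\delta$, is a homomorphism of twin groups; moreover $d^k(t_k)=t_{k+1}t_kt_{k+1}$, and every coface map appearing in $\delta$ fixes $t_k$ (since its index exceeds $k$), so $\delta(t_k)=t_k$. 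Applying $\delta$ therefore gives
\begin{equation*}
q_k \;=\; \delta(t_{k+1})\,\delta(t_k)\,\delta(t_{k+1}) \;=\; q_{k+1}\,t_k\,q_{k+1}, \qquad 1\le k\le n-1,
\end{equation*}
while a direct evaluation gives $q_n = d^{n-2}(t_{n-1}) = t_n$.

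Since each $q_k$ is a conjugate of the involution $t_k$, relation (1) holds automatically, and the recursion inverts to
\begin{equation*}
t_k \;=\; q_{k+1}\,q_k\,q_{k+1}\ \ (1\le k\le n-1), \qquad t_n = q_n .
\end{equation*}
In particular every $t_i$ lies in $\langle q_1,\dots,q_n\rangle$, so $\{q_1,\dots,q_n\}$ generates $T_{n+1}$; conversely each $q_k$ is by construction a word in the $t_i$. Thus the two generating sets are related by mutually inverse monomial substitutions (the double occurrence of $q_{k+1}$ being harmless because $q_{k+1}^{-1}=q_{k+1}$ modulo relation (1)), which is exactly the setting in which a presentation may be rewritten by Tietze transformations.

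I would then substitute $t_i\mapsto q_{i+1}q_iq_{i+1}$ (for $i<n$) and $t_n\mapsto q_n$ into the standard presentation $\langle t_1,\dots,t_n \mid t_i^2=1,\ [t_i,t_j]=1\ \text{for}\ |i-j|\ge 2\rangle$. The involution relations $t_i^2=1$ collapse to relation (1) by the conjugacy above, and each far-commutativity relation $[t_i,t_j]=1$ becomes a commutator of the expressions $q_{i+1}q_iq_{i+1}$ and $q_{j+1}q_jq_{j+1}$ (or of $q_{i+1}q_iq_{i+1}$ and $q_n$ when $j=n$); these are the commutators recorded in relations (2) and (3). Running the substitution in the opposite direction verifies that (1)--(3) themselves hold in $T_{n+1}$, so the two relation sets have the same normal closure and the presentations agree.

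The step I expect to be the main obstacle is the last one: checking that relations (2) and (3) reproduce \emph{exactly} the family of far-commutativity relations $[t_i,t_j]=1$ with $|i-j|\ge 2$, neither omitting any nor adding redundant ones. This requires careful bookkeeping of the index ranges --- separating the commutations involving the exceptional generator $t_n=q_n$ (which take the form of relation (2)) from those among $t_1,\dots,t_{n-1}$ (which take the form of relation (3)) --- and confirming that every admissible pair $(i,j)$ is accounted for by precisely one of the two clauses. Once this correspondence of relations is pinned down, the equality of presentations follows.
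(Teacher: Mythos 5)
Your derivation of the recursion $q_k=q_{k+1}t_kq_{k+1}$ from the homomorphism property of the coface maps is correct, and it is a slicker route to the key identity $t_k=q_{k+1}q_kq_{k+1}$ than the paper's, which first computes the closed form $q_k=t_nt_{n-1}\cdots t_{k+1}t_kt_{k+1}\cdots t_{n-1}t_n$ by iterating the $d^i$ and then checks the conjugation directly. (Like the paper's proof, you are implicitly correcting an off-by-one in the statement: the composite must be $d^{n-1}\cdots d^{k}$, not $d^{n}\cdots d^{k}$, for $q_k$ to lie in $T_{n+1}$ and for $q_n=\delta(t_n)$ to hold when $k=n-1$.) Everything through the generation claim is sound and agrees with the paper.

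However, the step you defer as ``careful bookkeeping'' is a genuine gap, and doing the bookkeeping shows it cannot be closed for the statement as printed. Rewriting $[t_i,t_j]=1$, $|i-j|\ge 2$, under $t_i\mapsto q_{i+1}q_iq_{i+1}$ ($i\le n-1$) and $t_n\mapsto q_n$ yields clause (2) for the pairs with $j=n$, but for pairs with $i,j\le n-1$ it yields commutators for all $|i-j|\ge 2$, whereas clause (3) only records those with $|i-j|>2$. The omitted relations, those with $|i-j|=2$ and $i,j\le n-1$, are not consequences of (1)--(3): for $n=4$ clause (3) is vacuous, the missing relation is $[q_2q_1q_2,\,q_4q_3q_4]=1$ (the image of $[t_1,t_3]=1$), and the assignment $q_1\mapsto(1\,2)$, $q_2\mapsto(2\,3)$, $q_3\mapsto(3\,4)$, $q_4\mapsto e$ into $S_4$ satisfies (1)--(3) while sending that commutator to $[(1\,3),(3\,4)]\ne e$. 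So the group presented by (1)--(3) surjects onto $T_5$ with nontrivial kernel, and your claim that the rewritten far-commutativity relations ``are the commutators recorded in relations (2) and (3)'' is false as stated. The fix is that the condition in (3) should read $|i-j|\ge 2$; with that correction your Tietze argument completes, and it then coincides with the paper's own (equally unelaborated) final step.
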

\begin{proof}
Using the simplicial identity $d^j d^i = d^{i+1} d^j$ for  $j \le i$, we can assume that $i_{n-1}>i_{n-2}>\cdots>i_1$ in the composite map $d^{i_{n-1}}d^{i_{n-2}} \cdots d^{i_{1}}$. We see that 
$$q_k=d^{n-1}d^{n-2}\cdots d^{k}(t_k )=t_nt_{n-1}\cdots t_{k+1}t_kt_{k+1}\cdots t_{n-1} t_n~\text {for}~1 \le k\le n-1$$
and
$$q_n=d^{n-2}(t_{n-1})=t_n.$$
A direct check gives $t_k=q_{k+1}q_k q_{k+1}$ for each $1 \le k \le n-1$, and hence $\{q_1, \ldots, q_n\}$ generates $T_{n+1}$. Further, the defining relations of $T_{n+1}$ in terms of the Coxeter generating set $\{t_1, \ldots, t_n \}$ gives the defining relations for the new generating set as follows:
\begin{enumerate}
\item $q_i^2=1$ for all $i$,
\item $[ q_{i+1} q_i q_{i+1}, q_n] = 1$ for $i < n-1$,
\item $[q_{i+1}q_iq_{i+1},q_{j+1}q_jq_{j+1}]$ for $|i-j|>2$ and $i,j \leq n-1$.
\end{enumerate}
\end{proof}
\medskip

\section{Brunnian twins}\label{section 3}
In the influential work \cite{MR2188127}, a connection has been established between certain quotients of the Brunnian braid groups of the 2-sphere and its higher homotopy groups.

\begin{definition}
A pure twin is said to be \textit{Brunnian} if it becomes trivial after removing any one of its strands. 
\end{definition}

Let $\Brun(T_n)$ denote the set of all Brunnian twins on $n$ strands.

\begin{proposition}\label{BrNormalInTn}
$\Brun(T_n)$ is a normal subgroup of $PT_n$.
\end{proposition}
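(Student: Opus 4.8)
The plan is to realise $\Brun(T_n)$ as an intersection of kernels of the strand-deletion maps and then invoke the elementary fact that such an intersection is automatically a normal subgroup. First I would unwind the definition: a pure twin $\beta \in PT_n$ is Brunnian precisely when it becomes trivial after the deletion of any single strand, that is, when $d_i(\beta) = 1$ for every $0 \le i \le n-1$, where $d_i : T_n \to T_{n-1}$ is the face map of Proposition \ref{bi-delta-set on tn} that deletes the $(i+1)$-th strand. This exhibits $\Brun(T_n)$ as $\bigcap_{i=0}^{n-1} \ker\!\big(d_i|_{PT_n}\big)$, which in particular already shows it is a subgroup.

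The key input, recorded just before Proposition \ref{bi-delta-set on tn}, is that although $d_i$ is not a homomorphism on all of $T_n$ (it obeys only the twisted rule \eqref{di not homo on tn}), its restriction $d_i : PT_n \to PT_{n-1}$ \emph{is} a genuine group homomorphism with image in $PT_{n-1}$. Hence each $\ker(d_i|_{PT_n})$ is normal in $PT_n$, and an arbitrary intersection of normal subgroups is again normal, giving the claim at once. Equivalently, and perhaps more transparently, I would check normality directly by conjugation: for $g \in PT_n$ and $\beta \in \Brun(T_n)$ the homomorphism property gives $d_i(g\beta g^{-1}) = d_i(g)\,d_i(\beta)\,d_i(g)^{-1} = d_i(g)\cdot 1\cdot d_i(g)^{-1} = 1$ for every $i$, so $g\beta g^{-1} \in \Brun(T_n)$.

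There is no serious obstacle here; the only point that genuinely uses the structure established earlier is that the deletion maps restrict to \emph{honest} group homomorphisms on the pure subgroup. On all of $T_n$ the map $d_i$ satisfies only \eqref{di not homo on tn}, so the clean intersection-of-kernels (or conjugation) argument applies at the level of $PT_n$, which is exactly the setting of the statement.
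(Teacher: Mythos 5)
Your proposal is correct and is essentially identical to the paper's own proof: both exhibit $\Brun(T_n)$ as $\bigcap_{i=0}^{n-1}\ker\big(d_i|_{PT_n}\big)$ and use the fact that each restricted $d_i$ is a genuine group homomorphism, so the intersection of their kernels is normal. (If anything, the paper's version asserts normality in $T_n$ while only citing the homomorphism property on $PT_n$; your argument cleanly establishes the stated claim of normality in $PT_n$.)
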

\begin{proof}
For each $0 \leq i \leq n-1$, let $d_i: PT_n\to PT_{n-1}$ be the face map of Proposition \ref{bi-delta-set on tn}. Since each $d_i$ is a group homomorphism and $$\Brun(T_n) = \bigcap_{i=0}^{n-1} \ker (d_i),$$
it follows that $\Brun(T_n)$ is a normal subgroup of $T_n$.
\end{proof}

Next, we attempt to understand the groups of Brunnian twins.

\begin{proposition}
For $n \geq 4$, $\Brun(T_n)$ does not contain any element from $\{(t_it_{i+1})^3 \mid 1 \le i \le n-1 \}$.
\end{proposition}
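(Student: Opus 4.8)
The plan is to show that each element $(t_i t_{i+1})^3$ fails to lie in the kernel of at least one face map $d_k$, and hence cannot belong to $\Brun(T_n) = \bigcap_{k=0}^{n-1} \ker(d_k)$. The guiding observation is that $(t_i t_{i+1})^3$ only ``sees'' the three consecutive strands $i, i+1, i+2$, whereas for $n \ge 4$ there is always a strand outside this support whose deletion leaves the element nontrivial.

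First I would record that the pure twin $w := (t_i t_{i+1})^3$ is supported on strands $\{i, i+1, i+2\}$, meaning it lies in the parabolic subgroup $\langle t_i, t_{i+1}\rangle \le T_n$ (note this already forces $i \le n-2$ for the element to be defined, so the support sits inside $\{1, \ldots, n\}$). By the standard theory of right-angled Coxeter groups, this parabolic is isomorphic to $T_3$, an infinite dihedral group, inside which $(t_i t_{i+1})^3$ generates the infinite cyclic group $PT_3$. In particular $w \neq 1$, and more generally any shifted copy $(t_a t_{a+1})^3$ sitting in some $T_m$ is nontrivial for the same reason.

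Next, since $n \ge 4$ and the support has only three elements, at least one strand lies outside $\{i, i+1, i+2\}$: if $i \ge 2$ then strand $1$ is outside, while if $i = 1$ then strand $n$ is outside. Deleting such a strand, that is, applying $d_0$ in the first case or $d_{n-1}$ in the second, leaves the crossings among the three support strands untouched. Using that $d_k$ restricts to a genuine group homomorphism on $PT_n$ (Proposition \ref{bi-delta-set on tn}), the image $d_k(w)$ is again a copy $(t_a t_{a+1})^3$ of the $PT_3$-generator inside $T_{n-1}$, with $a = i-1$ in the first case and $a = 1$ in the second. By the previous paragraph this image is nontrivial, so $w \notin \ker(d_k)$, and since $\Brun(T_n) \subseteq \ker(d_k)$ the claim follows.

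The only delicate point is making precise the phrase ``deleting a strand outside the support leaves the element unchanged up to an index shift.'' I would justify this either directly from the geometric interpretation of the face maps (the deleted strand never participates in any crossing of $w$, so its removal merely reindexes the surviving generators), or, if a formal check is preferred, by unwinding the twisted Leibniz identity \eqref{di not homo on tn} along the word $t_i t_{i+1} t_i t_{i+1} t_i t_{i+1}$ and verifying that, when the deleted strand has index outside $\{i, i+1, i+2\}$, each letter $t_i$ and $t_{i+1}$ is carried to the expected generator $t_a$ or $t_{a+1}$. This bookkeeping is the main technical obstacle, but it is routine once the support and the chosen $d_k$ are fixed.
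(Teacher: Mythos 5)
Your argument is correct and is essentially the paper's own proof: the paper simply observes that for $n \ge 4$ the twin $(t_i t_{i+1})^3$ has a strand not involved in any crossing, and removing that strand yields a non-trivial twin, so the element fails to be Brunnian. Your additional justifications (the parabolic subgroup $\langle t_i, t_{i+1}\rangle \cong T_3$ being infinite dihedral, and the bookkeeping via the identity \eqref{di not homo on tn}) only flesh out what the paper leaves implicit.
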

\begin{proof}
For $n \ge 4$,  removing a trivial strand from $(t_it_{i+1})^3$ gives a non-trivial twin, and hence the assertion follows.
\end{proof}

\begin{proposition}\label{brun t3 is pt3}
 $\Brun(T_3) \cong PT_3 \cong \mathbb{Z}$.
\end{proposition}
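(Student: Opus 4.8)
The plan is to leverage the identification, established within the proof of Proposition~\ref{BrNormalInTn}, that
$$
\Brun(T_n) = \bigcap_{i=0}^{n-1} \ker(d_i),
$$
where each $d_i\colon PT_n \to PT_{n-1}$ is the strand-deletion homomorphism from Proposition~\ref{bi-delta-set on tn}. Since $\Brun(T_3)$ is by definition a subgroup of $PT_3$, and since $PT_3 \cong \mathbb{Z}$ is already recorded in the excerpt (generated by $(t_1 t_2)^3$), it suffices to prove the reverse inclusion $PT_3 \subseteq \Brun(T_3)$. Unwinding the definitions, this amounts to checking that every pure twin on three strands becomes trivial after the deletion of any one of its strands.

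The key observation I would use is that deleting a strand from a pure twin on three strands produces a pure twin on \emph{two} strands, and $PT_2$ is trivial. Concretely, for each $0 \le i \le 2$ the homomorphism $d_i$ takes values in $PT_2 = 1$, so $\ker(d_i) = PT_3$ for every $i$. Intersecting over $i$ then gives
$$
\Brun(T_3) = \bigcap_{i=0}^{2} \ker(d_i) = \bigcap_{i=0}^{2} PT_3 = PT_3.
$$

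Combining this equality with the known isomorphism $PT_3 \cong \mathbb{Z}$ finishes the proof. I do not anticipate any real obstacle here: the entire content is the triviality of $PT_2$, which automatically forces \emph{every} pure three-strand twin to be Brunnian, so the statement reduces to quoting the already-established structure of $PT_3$. The only point worth stating carefully is that strand deletion does indeed land in $PT_2$ (i.e.\ $d_i(PT_3) \subseteq PT_2$), which is exactly the content guaranteeing that each $d_i$ is a well-defined homomorphism into the trivial group.
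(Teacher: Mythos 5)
Your proposal is correct and essentially matches the paper's proof: both reduce the statement to the known fact that $PT_3$ is infinite cyclic generated by $(t_1t_2)^3$ and to the observation that every three-strand pure twin trivializes upon strand deletion. The only cosmetic difference is that you derive this from the triviality of $PT_2$ (so that every $\ker(d_i)$ is all of $PT_3$), whereas the paper simply notes that the single generator $(t_1t_2)^3$ is visibly Brunnian; the underlying content is the same.
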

\begin{proof}
We already have $\Brun(T_3) \subseteq PT_3$. By \cite[Theorem 2]{MR4027588}, we know that $PT_3$ is the infinite cyclic group generated by $(t_1t_2)^3$ (see Figure \ref{pure twin in pt3}), and clearly $(t_1t_2)^3 \in \Brun(T_3)$.
\end{proof}

In contrast, it is proved in \cite{MR362287} that $\Brun(B_3)$ is the commutator subgroup of the Artin pure braid group $P_3$. 

\begin{theorem}\label{brun t4}
$\Brun(T_4)$ is a free group of infinite rank.
\end{theorem}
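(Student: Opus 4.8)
The plan is to exhibit $\Brun(T_4)$ as a nontrivial normal subgroup of infinite index inside a finitely generated nonabelian free group, and then invoke the classical fact that such a subgroup is forced to be free of infinite rank. The explicit infinite basis is then a bookkeeping refinement on top of this structural skeleton.

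First I would pin down the ambient group. The only far-commutation among $t_1,t_2,t_3$ is $t_1t_3=t_3t_1$, so the defining graph of $T_4$ is the disjoint union of the edge $\{1,3\}$ and the isolated vertex $2$; consequently $T_4 \cong (\mathbb{Z}/2 \times \mathbb{Z}/2) * \mathbb{Z}/2$ is a free product of finite groups, hence virtually free. The kernel $PT_4$ of $\nu : T_4 \to S_4$ has finite index $|S_4| = 24$ and is torsion-free: every finite-order element of a Coxeter group is conjugate into a finite standard parabolic subgroup, and here each such subgroup --- namely $\langle t_i\rangle$ or $\langle t_1,t_3\rangle$ --- sends every nonidentity element to a nonidentity permutation, so no nontrivial torsion element can lie in $\ker\nu$. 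A torsion-free virtually free group is free (Stallings--Swan, Karrass--Pietrowski--Solitar), whence $PT_4$ is free, and multiplicativity of the Euler characteristic, $\chi(PT_4) = 24\,\chi(T_4) = 24\left(\tfrac14 + \tfrac12 - 1\right) = -6$, shows it has rank $7$. In particular $PT_4$ is nonabelian.

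Next I would locate $\Brun(T_4)$ inside this free group. By Proposition \ref{bi-delta-set on tn} each face map $d_i : PT_4 \to PT_3$ is a surjective homomorphism, and by Proposition \ref{brun t3 is pt3} the target $PT_3 \cong \mathbb{Z}$ is abelian; hence every $d_i$ factors through the abelianization of $PT_4$ and kills its commutator subgroup. Since $\Brun(T_4) = \bigcap_{i=0}^{3}\ker d_i$, this yields $[PT_4,PT_4] \subseteq \Brun(T_4)$, so $\Brun(T_4)$ is nontrivial because $PT_4$ is nonabelian free. On the other hand $\Brun(T_4) \subseteq \ker d_0$ and $PT_4/\ker d_0 \cong \mathbb{Z}$ is infinite, so $\Brun(T_4)$ has infinite index in $PT_4$; and by Proposition \ref{BrNormalInTn} it is normal.

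Finally, $\Brun(T_4)$ is a subgroup of the free group $PT_4$, hence free by the Nielsen--Schreier theorem; and a nontrivial finitely generated normal subgroup of a free group has finite index, so a nontrivial normal subgroup of infinite index cannot be finitely generated. Therefore $\Brun(T_4)$ is free of infinite rank. To upgrade this to the explicit infinite free basis promised by the statement, I would fix a concrete free basis of $PT_4$ (seven pure twins), compute the matrix of $\Phi = (d_0,d_1,d_2,d_3) : PT_4 \to \mathbb{Z}^4$ on these generators to identify the free abelian quotient $PT_4/\Brun(T_4)$, choose a Schreier transversal indexed by that quotient, and run Reidemeister--Schreier rewriting. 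The conceptual argument above is short; the genuine labour --- and the main obstacle --- is this last step: writing down a usable basis of $PT_4$, evaluating the strand-deletion maps on it, and organizing the resulting infinite Schreier generating set into a clean closed form.
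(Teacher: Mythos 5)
Your argument is correct, and it reaches the conclusion by a genuinely different route from the paper. For the freeness and rank of $PT_4$, the paper simply cites the known result that $PT_4$ is free of rank $7$ with an explicit basis $x_1,\dots,x_7$, whereas you rederive it structurally from the decomposition $T_4\cong(\mathbb{Z}/2\times\mathbb{Z}/2)*\mathbb{Z}/2$, torsion-freeness of the kernel of $\nu$, Stallings--Swan, and the Euler characteristic; both are valid, though yours duplicates work the paper outsources. The real divergence is in how infinite rank is obtained. The paper computes the images of the seven generators under each $d_i$, describes each $\ker(d_i)$ by exponent-sum conditions, identifies $PT_4/\Brun(T_4)\cong\mathbb{Z}^4$, and then runs Reidemeister--Schreier on a transversal $\{x_1^{k_1}x_2^{k_2}x_3^{k_3}x_4^{k_4}\}$ to produce an explicit infinite free basis. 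You instead observe that each $d_i$ lands in $PT_3\cong\mathbb{Z}$, so $[PT_4,PT_4]\subseteq\Brun(T_4)$ (nontriviality), that $\Brun(T_4)\subseteq\ker(d_0)$ forces infinite index, and then invoke normality (Proposition \ref{BrNormalInTn}) together with the classical fact that a nontrivial finitely generated normal subgroup of a free group has finite index. This is shorter and cleaner for the bare statement of Theorem \ref{brun t4}, but it is purely existential: the explicit generating set that the paper advertises (and uses, e.g., to see $PT_4/\Brun(T_4)\cong\mathbb{Z}^4$ and later $CPT_4=\Brun(T_4)\rtimes\langle(t_1t_2)^3\rangle$) only comes from the computational route, which you correctly identify as the remaining labour rather than a gap in the logic.
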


\begin{proof}
By \cite[Theorem 2]{MR4027588}, $PT_4$ is a free group of rank $7$ generated by 
$$x_1=(t_1t_2)^3, \quad x_2=((t_1t_2)^3)^{t_3}, \quad  x_3=((t_1t_2)^3)^{t_3t_2}, \quad  x_4=((t_1t_2)^3)^{t_3t_2t_1},$$
$$ x_5=(t_2t_3)^3, \quad  x_6=((t_2t_3)^3)^{t_1}, \quad  x_7=((t_2t_3)^3)^{t_1t_2}.$$ 
Denote the generator $(t_1t_2)^3$  of $PT_3$ by $y$. Direct computations show that images of $x_i$'s under  the face maps $d_i$'s are as follows:
\begin{eqnarray*}
d_0(x_1)=d_0(x_2)=d_0(x_3)= d_0(x_6)=d_0(x_7)=1, && d_0(x_4)=d_0(x_5)=y,\\
d_1(x_1)=d_1(x_2)=d_1(x_4)= d_1(x_5)=d_1(x_7)=1, && d_1(x_3)=d_1(x_6)=y,\\ 
d_2(x_1)=d_2(x_3)=d_2(x_4)= d_2(x_5)=d_2(x_6)=1, && d_2(x_2)=d_2(x_7)=y,\\
d_3(x_2)=d_3(x_3)=d_3(x_4)= d_3(x_5)=d_3(x_6)=d_3(x_7)=1, && d_3(x_1)=y.
\end{eqnarray*}
For each generator $x_i$, let $\log_i(w)$ denote the sum of the powers of $x_i$ in the word $w$. Then it follows that
\begin{eqnarray*}
\ker(d_0) &=& \lbrace w \in PT_4~\mid~ \log_{4}(w)+ \log_{5}(w) = 0 \rbrace,\\
\ker(d_1) &=& \lbrace w \in PT_4~\mid~ \log_{3}(w)+ \log_{6}(w) = 0 \rbrace,\\
\ker(d_2) &=& \lbrace w \in PT_4~\mid~ \log_{2}(w)+ \log_{7}(w) = 0 \rbrace,\\
\ker(d_3) &=& \lbrace w \in PT_4~\mid~ \log_{1}(w) = 0 \rbrace,
\end{eqnarray*}
and hence
\begin{eqnarray*}
&& \Brun(T_4)\\
&=& \bigcap_{i=0}^3\ker(d_i) \\
&=& \Big \lbrace w \in PT_4~\mid~  \log_{4}(w)+ \log_{5}(w) =  \log_{3}(w)+ \log_{6}(w) = \log_{2}(w)+ \log_{7}(w) =\log_{1}(w) = 0\Big \rbrace.
\end{eqnarray*}
Clearly, $\Brun(T_4)$ is  free being a subgroup of the free group $PT_4$. We now find an infinite free basis for $\Brun(T_4)$. It follows from the preceding description of $\Brun(T_4)$ that the commutator subgroup of $PT_4$ is contained in $\Brun(T_4)$. In fact, the containment is strict since $x_4x_5^{-1} \in \Brun(T_4)$, but  $x_4x_5^{-1} \not\in PT_n^\prime$. Thus, $PT_4/\Brun(T_4)$ is a non-trivial abelian group. Let $q:PT_4 \to PT_4/\Brun(T_4)$
be the quotient map with $q(x_i)=y_i$ for  $1 \le i \le 7$. Since $x_2x_7^{-1} , x_3x_6^{-1}, x_4x_5^{-1} \in \Brun(T_4)$, the  group $PT_4/\Brun(T_4)$ is generated by the set $\{y_1, y_2, y_3, y_4\}$. Note that $x_ix_j^{-1} \not\in \Brun(T_4)$ for all $i \neq j \in \{1,2,3,4\}$ and $x_i^k \not\in \Brun(T_4)$ for $k>0$. Thus, by the fundamental theorem for finitely generated abelian groups, $PT_4/\Brun(T_4)$ is a free abelian group of rank 4.
\par
Consider the short exact sequence
$$
1\to \Brun(T_4) \to PT_4 \to \mathbb Z^4 \to 1.
$$
We fix a Schreier system $\{x_1^{k_1} x_2^{k_2} x_3^{k_3} x_4^{k_4} \mid k_1, k_2, k_3, k_4 \in \mathbb Z\}$ of coset representatives of $\Brun(T_4)$ in $PT_4$. This gives a free basis for $\Brun(T_4)$ consisting of elements of the  form
\begin{eqnarray*}
x_1^{k_1} x_2^{k_2} x_3^{k_3} x_4^{k_4} x_1 (x_1^{k_1+1} x_2^{k_2} x_3^{k_3} x_4^{k_4})^{-1}, && x_1^{k_1} x_2^{k_2} x_3^{k_3} x_4^{k_4} x_2 (x_1^{k_1} x_2^{k_2+1} x_3^{k_3} x_4^{k_4})^{-1},\\
x_1^{k_1} x_2^{k_2} x_3^{k_3} x_4^{k_4} x_3 (x_1^{k_1} x_2^{k_2} x_3^{k_3+1} x_4^{k_4})^{-1}, && x_1^{k_1} x_2^{k_2} x_3^{k_3} x_4^{k_4} x_4 (x_1^{k_1} x_2^{k_2} x_3^{k_3} x_4^{k_4+1})^{-1},\\
x_1^{k_1} x_2^{k_2} x_3^{k_3} x_4^{k_4} x_5 (x_1^{k_1} x_2^{k_2} x_3^{k_3} x_4^{k_4+1})^{-1}, && x_1^{k_1} x_2^{k_2} x_3^{k_3} x_4^{k_4} x_6 (x_1^{k_1} x_2^{k_2} x_3^{k_3+1} x_4^{k_4})^{-1},\\
x_1^{k_1} x_2^{k_2} x_3^{k_3} x_4^{k_4} x_7 (x_1^{k_1} x_2^{k_2+1} x_3^{k_3} x_4^{k_4})^{-1}, &&
\end{eqnarray*}
for $k_1, k_2, k_3, k_4 \in \mathbb Z$. This completes the proof.
\end{proof}

\begin{remark}
$PT_n/\Brun(T_n)$ is non-abelian for $n\geq 5$ since $PT_n^{\prime} \not\subseteq \Brun(T_n)$ for $n\geq 5$. For example,  if $w=[(t_1t_2)^3, (t_2t_3)^3]$, then $d_4(w)\neq 1$.
\end{remark}

\begin{proposition}
$PT_n/\Brun(T_n)$ is torsion free for each $n \ge 4$ and $PT_4/\Brun(T_4) \cong \mathbb{Z}^4$.
\end{proposition}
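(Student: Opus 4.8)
The plan is to split the statement into its two assertions, disposing of the isomorphism $PT_4/\Brun(T_4)\cong\mathbb Z^4$ first. This is essentially already contained in the proof of Theorem \ref{brun t4}: there one exhibits the short exact sequence $1\to\Brun(T_4)\to PT_4\to\mathbb Z^4\to 1$ and identifies the quotient, via the abelianization analysis of the seven free generators $x_1,\dots,x_7$ of $PT_4$, as a free abelian group of rank $4$. So for this part I would simply invoke that computation; no new work is required.

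For the torsion-freeness of $PT_n/\Brun(T_n)$, the key observation is that the face maps assemble into an embedding. Recall from the proof of Proposition \ref{BrNormalInTn} that $\Brun(T_n)=\bigcap_{i=0}^{n-1}\ker(d_i)$, where each $d_i\colon PT_n\to PT_{n-1}$ is a group homomorphism by Proposition \ref{bi-delta-set on tn}. I would then consider the product homomorphism
$$
\Phi=(d_0,d_1,\dots,d_{n-1})\colon PT_n\longrightarrow (PT_{n-1})^{\,n}.
$$
Its kernel is exactly $\bigcap_{i}\ker(d_i)=\Brun(T_n)$, so $\Phi$ induces an injection $PT_n/\Brun(T_n)\hookrightarrow (PT_{n-1})^{\,n}$. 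Since a finite direct product of torsion-free groups is torsion-free and a subgroup of a torsion-free group is torsion-free, the whole claim reduces to showing that $PT_{n-1}$ is torsion-free.

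I would establish the latter by proving, by induction on $m$, that $PT_m$ is torsion-free for all $m\ge 2$. The base cases $PT_2=1$ and $PT_3\cong\mathbb Z$ (Proposition \ref{brun t3 is pt3}) are clear. For the inductive step, assume $PT_{m-1}$ is torsion-free. The embedding above (taken with $n=m$) shows $PT_m/\Brun(T_m)$ is torsion-free, while $\Brun(T_m)$ is free by Proposition \ref{brun tn free}, hence also torsion-free. Now I use the elementary fact that an extension $1\to N\to G\to Q\to 1$ with $N$ and $Q$ both torsion-free is itself torsion-free: any finite-order $g\in G$ maps to a finite-order element of the torsion-free group $Q$, hence $g\in N$, and then $g=1$ since $N$ is torsion-free. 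Applying this to $1\to\Brun(T_m)\to PT_m\to PT_m/\Brun(T_m)\to 1$ yields that $PT_m$ is torsion-free, completing the induction; specializing to $m=n-1$ finishes the proof of the proposition.

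The argument is self-contained given the earlier results, and the only genuine point of substance is the realization that torsion-freeness of the quotient cannot be extracted in isolation and must be bootstrapped together with torsion-freeness of all pure twin groups; the extension lemma combined with the freeness of $\Brun(T_m)$ is what makes the induction close. I expect no serious obstacle beyond verifying that $\ker\Phi$ is precisely $\Brun(T_n)$ (immediate from the defining intersection) and the routine bookkeeping in the extension lemma. Should one wish to avoid citing Proposition \ref{brun tn free}, the torsion-freeness of $PT_m$ admits an independent Coxeter-theoretic proof: every torsion element of the right-angled Coxeter group $T_m$ is conjugate into a finite standard parabolic of the form $(\mathbb Z/2)^k$, on which the projection $\nu\colon T_m\to S_m$ is injective because the corresponding generators map to pairwise disjoint transpositions, forcing any torsion element of $PT_m=\ker\nu$ to be trivial.
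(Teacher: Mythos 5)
Your argument is correct, and its core step --- the product homomorphism $\Phi=(d_0,\dots,d_{n-1})$ inducing an embedding $PT_n/\Brun(T_n)\hookrightarrow (PT_{n-1})^{n}$, followed by the observation that subgroups and finite products of torsion-free groups are torsion-free --- is exactly the paper's proof; the identification $PT_4/\Brun(T_4)\cong\mathbb{Z}^4$ is likewise delegated to the proof of Theorem \ref{brun t4} in both treatments. The one genuine divergence is how torsion-freeness of $PT_{n-1}$ is obtained: the paper simply cites \cite[Theorem 3]{MR4027588}, whereas you bootstrap it by induction on $m$, closing the inductive step with the freeness of $\Brun(T_m)$ (Proposition \ref{brun tn free}) together with the standard fact that an extension of a torsion-free group by a torsion-free group is torsion-free. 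This makes your version self-contained at the cost of an extra dependence on Proposition \ref{brun tn free}; that dependence is non-circular, since the freeness of $\Brun(T_m)$ is proved via the embedding into pure virtual twin groups and never uses torsion-freeness of $PT_m$. Your closing Coxeter-theoretic alternative --- every torsion element of the right-angled Coxeter group $T_m$ is conjugate into a finite standard parabolic $(\mathbb{Z}/2)^k$ generated by pairwise far-commuting $t_i$'s, on which $\nu$ is injective because the images are disjoint transpositions --- is also valid and is arguably the cleanest independent route to the torsion-freeness of the pure twin groups.
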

\begin{proof}
The homomorphisms $d_i: PT_n \rightarrow PT_{n-1}$ induce an injective homomorphism
$$
PT_n / \Brun(T_n) \hookrightarrow  \underbrace{PT_{n-1} \times \cdots \times PT_{n-1}}_{n~times}.
$$
By \cite[Theorem 3]{MR4027588}, $PT_n$ is torsion-free for $n \ge 3$. Hence, $PT_{n-1} \times \cdots \times PT_{n-1}$ is torsion free, and therefore  $PT_n / \Brun(T_n)$ is so. The second assertion is proven in the proof of Theorem \ref{brun t4}. 
\end{proof}
\par

\begin{problem}
Describe the structure of the group $PT_n/\Brun(T_n)$  for $n \ge 5$.
\end{problem}
\medskip

Recall from \cite{MR4027588, MR4651964} that  the {\it virtual twin group} $VT_n$ on $n \ge 2$ strands is the group generated by $\{ t_1, \ldots, t_{n-1}, \rho_1, \ldots, \rho_{n-1}\}$ and having the following defining relations:
\begin{eqnarray*}
t_i^{2} &=&1 \quad \textrm{for} \quad1 \le i \le  n-1, \label{1}\\ 
t_i t_j &=& t_j t_i\quad \textrm{for} \quad |i - j| \geq 2,\label{2}\\
\rho_i^{2} &=& 1\quad \textrm{for} \quad 1 \le i \le n-1, \label{3}\\
\rho_i\rho_j &=& \rho_j\rho_i \quad \textrm{for} \quad |i - j| \geq 2, \label{4}\\
\rho_i\rho_{i+1}\rho_i &=& \rho_{i+1}\rho_i\rho_{i+1} \quad\textrm{for} \quad 1 \le i \le n-2, \label{5}\\
\rho_i t_j &=& t_j\rho_i \quad \textrm{for} \quad |i - j| \geq 2, \label{6}\\
\rho_i\rho_{i+1} t_i &=& t_{i+1} \rho_i \rho_{i+1} \quad \textrm{for} \quad 1 \le i \le n-2. \label{7}
\end{eqnarray*}

The group $VT_n$ plays the role of virtual braid groups in the Alexander-Markov correspondence for the planar analogue of virtual knot theory. There is a surjective homomorphism $\mu:VT_n  \to S_n$ given by $$\mu(t_i) = \mu(\rho_i) = (i, i+1)$$
for all $1\leq i \leq n-1$. The kernel $PVT_n$ of this surjection is called the \textit{pure virtual twin group} on $n$ strands.
\par 

For each $n\geq 2$, we  have surjective homomorphisms $d_{n-1}: PT_n \to PT_{n-1}$ and $\overline{d}_{n-1}: PVT_n \to PVT_{n-1}$ that delete the $n$-th strand from the diagram of a pure twin and pure virtual twin. In the reverse directions, we have homomorphisms $d^{n-1}: PT_{n-1}\to PT_n$ and $\overline{d}^{n-1}: PVT_{n-1}\to PVT_n$ that add a trivial strand to the right side of the diagram. Further, we have $d_{n-1} \, d^{n-1}= \id_{PT_{n-1}}$ and $\overline{d}_{n-1} \, \overline{d}^{n-1}= \id_{PVT_{n-1}}$. Setting $U_n=\ker(d_{n-1})$ and $V_n=\ker(\overline{d}_{n-1})$, we have split short exact sequences
$$
1 \to U_n \to PT_n \to PT_{n-1} \to 1
$$
and
$$
1 \to V_n \to PVT_n \to PVT_{n-1} \to 1.
$$

In other words, $PT_n \cong U_n \rtimes PT_{n-1}$ and $PVT_n \cong V_n \rtimes PVT_{n-1}$.
\par

\begin{proposition}\label{brun tn free}
$\Brun(T_n)$ is free for all $n\geq 3$.
\end{proposition}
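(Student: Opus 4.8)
The plan is to reduce the statement to the freeness of a single, more tractable group, namely $U_n = \ker(d_{n-1})$. By the computation in the proof of Proposition~\ref{BrNormalInTn} we have $\Brun(T_n) = \bigcap_{i=0}^{n-1}\ker(d_i)$, and since the range of $i$ includes $i = n-1$, this gives the inclusion $\Brun(T_n) \subseteq \ker(d_{n-1}) = U_n$. Consequently, \emph{once we know that $U_n$ is free}, the Nielsen--Schreier theorem immediately yields that its subgroup $\Brun(T_n)$ is free as well. The base cases $n = 3, 4$ are already settled by Proposition~\ref{brun t3 is pt3} and Theorem~\ref{brun t4}, so the entire proposition rests on establishing that $U_n$ is free for every $n \ge 3$.

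To prove that $U_n$ is free I would exploit the geometric meaning of its elements. A pure twin lies in $U_n$ precisely when deleting the last strand produces the trivial twin; hence the sub-twin carried by the first $n-1$ strands is trivial, and all of the nontrivial weaving is concentrated in the way the $n$-th strand threads through the other $n-1$ (straightened) strands. Reading off, from top to bottom, the crossings of the $n$-th strand with strands $1,\dots,n-1$, the monotonicity of the strand together with the prohibition on triple points should collapse every such weaving to a normal form, so that $U_n$ is identified with the fundamental group of a one-dimensional complex and is therefore free. For the small values of $n$ for which $PT_n$ happens to be free, $U_n$ is automatically free as a subgroup of a free group; the real task is to make the argument uniform in $n$, via the splitting $PT_n \cong U_n \rtimes PT_{n-1}$ established just above.

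The main obstacle is exactly this uniform freeness of $U_n$. For large $n$ the group $PT_n$ is no longer free, so one cannot simply invoke that $U_n$ is a subgroup of a free group, and the freeness must be extracted intrinsically. The natural topological route, reading $d_{n-1}$ as the map $(p_n)_\#$ induced by the coordinate projection $p_n\colon X_n \to X_{n-1}$ and trying to identify $U_n$ with the $\pi_1$ of a fibre, is delicate because $p_n$ is \emph{not} a fibration: the fibre over a configuration is $\mathbb{R}$ with finitely many punctures, and the number of punctures jumps whenever two coordinates collide, so the fibres are disconnected and change homotopy type. I would therefore avoid the fibration machinery altogether and argue either combinatorially, producing an explicit free generating set for $U_n$ by a Reidemeister--Schreier computation relative to the splitting $PT_n \cong U_n \rtimes PT_{n-1}$, or cohomologically, by showing that the covering space of $X_n$ determined by $U_n$ deformation retracts onto a graph, so that $U_n$ has cohomological dimension one and is free by the Stallings--Swan theorem. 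Surmounting this point is the crux of the proof; the passage from $U_n$ free to $\Brun(T_n)$ free is then purely formal.
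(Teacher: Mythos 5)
Your reduction is exactly the one the paper uses: $\Brun(T_n)\subseteq U_n=\ker(d_{n-1}\colon PT_n\to PT_{n-1})$, so by Nielsen--Schreier the whole proposition hinges on $U_n$ being free. But that is precisely the step you do not carry out, and neither of the two routes you sketch obviously closes it. A Reidemeister--Schreier computation relative to the splitting $PT_n\cong U_n\rtimes PT_{n-1}$ produces a presentation of $U_n$ whose relations are inherited from those of $PT_n$; since $PT_n$ is not free for $n\geq 6$, there is no reason the resulting presentation is visibly free, and extracting freeness from it is real work, not a formality. The cohomological route is essentially circular: $X_n$ is a $K(PT_n,1)$, so the cover corresponding to $U_n$ is a $K(U_n,1)$, and showing it retracts onto a graph (equivalently, that $U_n$ has cohomological dimension one) is just a restatement of the claim that $U_n$ is free. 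Your geometric heuristic --- that all the weaving in an element of $U_n$ is carried by the last strand threading through $n-1$ trivial strands --- is suggestive but does not by itself rule out relations among these weavings.

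The paper closes the gap by a different and much shorter device: the natural map $t_i\mapsto t_i$ embeds $T_n$ into the pure virtual twin group's ambient group $VT_n$, restricting to an embedding $\psi_n\colon PT_n\to PVT_n$ compatible with the strand-deletion maps, so that $U_n=\ker(d_{n-1})$ embeds into $V_n=\ker(\overline{d}_{n-1}\colon PVT_n\to PVT_{n-1})$. The group $V_n$ is known to be free by a cited structural result on $PVT_n$, hence $U_n$ is free as a subgroup of a free group, and $\Brun(T_n)\subseteq U_n$ is free. In other words, the paper imports freeness from a larger, better-understood group rather than proving it intrinsically. Your write-up correctly identifies where the difficulty lies and correctly handles the formal part, but as it stands the crux is a statement of intent rather than a proof.
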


\begin{proof}
The map $t_i\mapsto t_i$ gives an embedding of $T_n$ into $PVT_n$ \cite[Corollary 3.5]{NNS2}. Restricting to $PT_n$, this gives an inclusion $\psi_n: PT_n \to PVT_n$ such that the following diagram commutes
$$
\begin{tikzcd}
PT_n \arrow[r, "d_{n-1}"] \arrow[d, "\psi_n"]
& PT_{n-1} \arrow[d, "\psi_{n-1}"] \\
PVT_n \arrow[r, "\overline{d}_{n-1}"] &  PVT_{n-1}.
\end{tikzcd}
$$

This gives  $U_n \cong \psi_n(U_n) = \psi_n (\ker(d_{n-1})) \leq   \ker(\overline{d}_{n-1}) = V_n$. Since $V_n$ is free for $n \ge 2$ \cite[Theorem 4.1]{MR4651964}, it follows that $U_n$ is also free. Note that the subgroup $U_i=\ker(d_i)$ is conjugate to $U_n$ by the element $t_{n-1}t_{n-2}\cdots t_{i+1}$. Thus, $U_i$ is free group for each $1 \le i \le n$, and hence $\Brun(T_n)=\cap_{i=1}^{n} U_i$ is a free group. 
\end{proof}

At this juncture, the ensuing problem naturally arises.

\begin{problem}
Determine a free generating set for $\Brun(T_n)$ for $n \ge 5$.
\end{problem}

We conclude the section with a consequence of the Decomposition Theorem for bi-$\Delta$-groups in our setting \cite[Proposition 1.2.9]{MR2203532}.

\begin{proposition}\label{prop:ptndecomposition}
The pure twin group $PT_{n+1}$ is the iterated semi-direct product of subgroups 
$$
\big\{ d^{i_k} d^{i_{k-1}} \cdots d^{i_1}(\Brun(T_{n-k+1})) ~\mid~ 0 \leq i_1<i_2<\cdots<i_k \leq n~\textrm{and}~0 \leq k \leq n \big\}
$$
with the lexicographic order on the indexing set $$ \big\{ ({i_k}, {i_{k-1}}, \ldots, {i_1},\underbrace{{i_0},{i_0},\ldots,{i_0}}_{n-k \text{ times }}) \mid 0 \leq i_1<i_2<\cdots<i_k \leq n~\textrm{and}~0 \leq k \leq n \big\}$$ from the left, where ${i_0}$ is the blank symbol considered smaller than all other indices.
\end{proposition}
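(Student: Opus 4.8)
The plan is to recognize this statement as a direct specialization of the general Decomposition Theorem for bi-$\Delta$-groups, \cite[Proposition 1.2.9]{MR2203532}, to the bi-$\Delta$-group $\{PT_n, d_i, d^i\}_{n \ge 1}$ produced in Proposition \ref{bi-delta-set on tn}. So the first step is to confirm that the hypotheses of that theorem hold in our setting. The essential structural input is the relation $d_i d^i = \id$ (identity $(4)$ of the bi-$\Delta$-group axioms), which says that every face map $d_i\colon PT_{n+1} \to PT_n$ is a split surjection with section the coface map $d^i$. Writing $G_m := PT_{m+1}$ so that the family becomes an abstractly indexed bi-$\Delta$-group, the Brunnian subgroup appearing in the theorem is the intersection of the kernels of all face maps out of a fixed level, and in our notation this is exactly $\bigcap_{i=0}^{m}\ker(d_i\colon PT_{m+1}\to PT_m) = \Brun(T_{m+1})$, the identification already used in the proof of Proposition \ref{BrNormalInTn}.

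Second, I would recall how the iterated semi-direct product is assembled, since this is what dictates the indexing set and its order. Each splitting $d_n d^n = \id$ yields an internal semi-direct decomposition $G_n = \ker(d_n)\rtimes d^n(G_{n-1})$; applying the injective coface operators to the analogous decompositions of the lower groups $G_{n-1}, G_{n-2}, \ldots$ while simultaneously peeling off the remaining face maps $d_{n-1}, \ldots, d_0$ from the successive kernels, one expresses $G_n$ as an internal iterated semi-direct product whose factors are precisely the images $d^{i_k}\cdots d^{i_1}(\Brun(T_{n-k+1}))$ of the Brunnian subgroups under the composites of coface operators with $0 \le i_1 < \cdots < i_k \le n$. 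The normality required at each stage, and the fact that the chosen factors generate $G_n$ and meet in the prescribed trivial way, are exactly what the cited theorem provides abstractly; in our instance the coface maps are injective group homomorphisms by Proposition \ref{bi-delta-set on tn}, so these factors are honest subgroups isomorphic to the corresponding Brunnian groups.

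Finally, I would translate the abstract conclusion back into twin-group notation and match the orderings. Under $G_m = PT_{m+1}$ the factor indexed by $k$ coface operators becomes $\Brun(T_{n-k+1})$, giving the factors displayed in the proposition. The order in which these factors multiply is governed by the lexicographic order on the padded multi-indices $(i_k, \ldots, i_1, i_0, \ldots, i_0)$ read from the left, where the $n-k$ copies of the blank symbol $i_0$ record the face maps not yet applied and are treated as smallest; this is precisely the order in which the recursive peeling selects the factors, and all such tuples have common length $n$ so the comparison is well defined. Since the genuine mathematics is contained entirely in \cite[Proposition 1.2.9]{MR2203532}, the main obstacle is purely bookkeeping: verifying that the dimension shift $G_m = PT_{m+1}$, the identification of the Brunnian pieces, and the left-lexicographic convention on the padded index tuples line up with the ordering produced by the recursive splitting, so that the internal iterated semi-direct product obtained is exactly the one asserted.
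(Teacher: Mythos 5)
Your proposal is correct and follows exactly the route the paper takes: the paper offers no independent argument but simply invokes the Decomposition Theorem for bi-$\Delta$-groups \cite[Proposition 1.2.9]{MR2203532} applied to the bi-$\Delta$-group $\{PT_n, d_i, d^i\}$ of Proposition \ref{bi-delta-set on tn}, with $\Brun(T_{m+1})=\bigcap_i\ker(d_i)$ as in Proposition \ref{BrNormalInTn}. Your additional bookkeeping on the dimension shift, the injectivity of the coface maps, and the left-lexicographic order on the padded index tuples matches the recursive peeling the paper illustrates in its $PT_4$ example.
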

\begin{example}
For $n=3$, we have
\begin{eqnarray*}
d^0(PT_3) = \langle (t_2t_3)^3 \rangle, && d^1(PT_3) = \langle (t_2t_1t_2t_3)^3 \rangle,\\
d^2(PT_3) = \langle (t_1t_3t_2t_3)^3 \rangle , && d^3(PT_3) = \langle (t_1t_2)^3 \rangle.
\end{eqnarray*}
\begin{figure}[H]
\includegraphics[width=13cm]{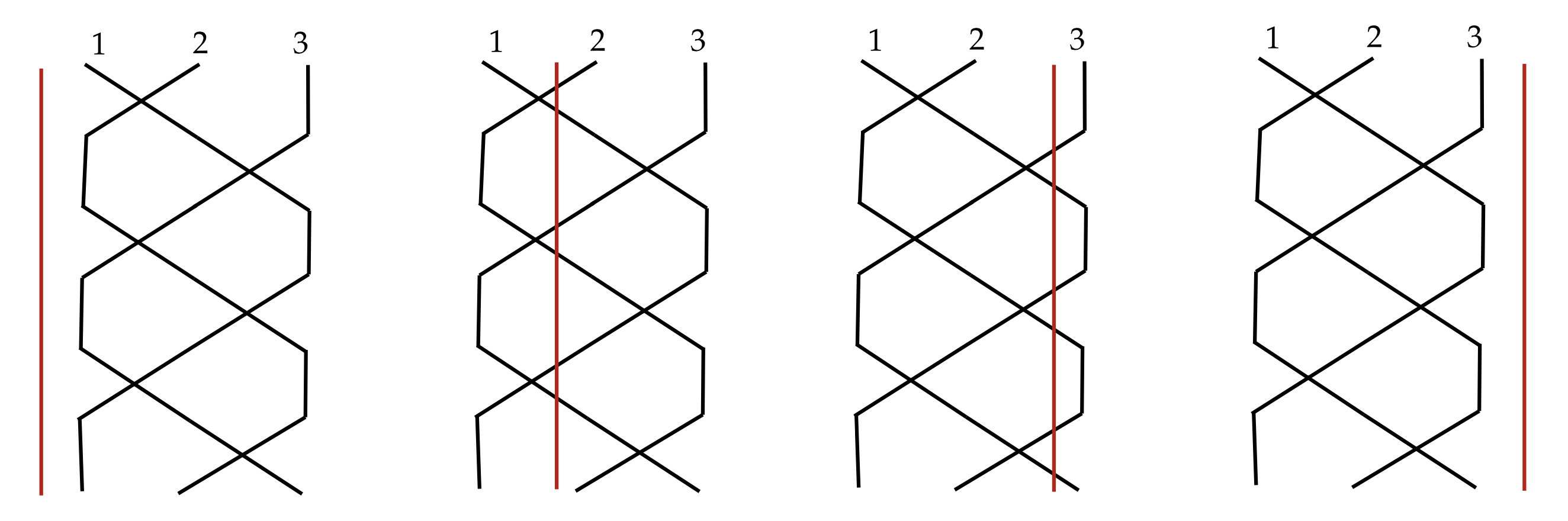}
\caption{Images of $(t_1t_2)^3$ under the coface maps $d^0, d^1, d^2$ and $d^3$.} 
\end{figure}
Observing the proof of \cite[Proposition 1.2.9]{MR2203532}, we get
$$PT_4 = \ker(d_3)\rtimes \langle (t_1t_2)^3 \rangle,$$
where $\ker(d_3)$ is normal in $PT_4$ and $\langle (t_1t_2)^3$ acts on $\ker(d_3)$ via conjugation. At the second stage, we obtain $$\ker(d_3) = (\ker(d_2)\cap \ker (d_3)) \rtimes \langle (t_1t_3t_2t_3)^3 \rangle,$$
where $\ker(d_2)\cap \ker (d_3)$ is normal in $\ker(d_3)$ and the subgroup $\langle (t_1t_3t_2t_3)^3 \rangle = \langle d^2((t_1t_2)^3) \rangle \leq \ker(d_3)$ acts on $\ker(d_2)\cap \ker (d_3)$ via conjugation. At the third stage, we get
$$
\ker(d_2)\cap \ker (d_3) = (\ker(d_1) \cap \ker(d_2)\cap \ker (d_3)) \rtimes \langle (t_2t_1t_2t_3)^3 \rangle,
$$
where $\ker(d_1)\cap \ker(d_2)\cap \ker (d_3)$ is normal in $\ker(d_2)\cap \ker(d_3)$ and the subgroup $\langle (t_2t_1t_2t_3)^3 \rangle = \langle d^1((t_1t_2)^3) \rangle \leq \ker(d_2)\cap\ker(d_3)$ acts on $\ker(d_1)\cap \ker(d_2)\cap \ker (d_3)$ via conjugation. Finally, we have
$$
\ker(d_1)\cap\ker(d_2)\cap \ker (d_3) = \Brun(T_4) \rtimes \langle (t_2t_3)^3\rangle,
$$
where $\Brun(T_4)$ is normal and the subgroup $\langle (t_2t_3)^3\rangle= \langle d^0((t_1t_2)^3) \rangle$ acts on $\Brun(T_4)$ via conjugation. Thus, we obtain the following decomposition of $PT_4$ as  an iterated semi-direct product
$$
PT_4 =\left(\left(\left(\Brun(T_4) \rtimes \langle (t_2t_3)^3\rangle\right) \rtimes \langle (t_2t_1t_2t_3)^3 \rangle\right) \rtimes \langle (t_1t_3t_2t_3)^3 \rangle\right) \rtimes \langle (t_1t_2)^3 \rangle.
$$
Similarly, there are 16 non-trivial terms in the decomposition of $PT_5$ with the leftmost term being the Brunnian subgroup $\Brun(T_5)$.
\end{example}
\medskip


\section{$k$-decomposable twins and Cohen twins}\label{section 4}
In this section, we consider two generalisations of Brunnian twins.

\subsection{$k$-decomposable twins} We begin with the following definition.

\begin{definition}
A pure twin on $n$ strands is said to be \textit{$k$-decomposable} if it becomes trivial after removing any $k$ of its strands.
\end{definition}

Clearly, a 1-decomposable twin is simply a Brunnian twin. Further, the set of all $k$-decomposable twins on $n$ strands forms a normal subgroup of $PT_n$ and we denote this subgroup by $D_{k,n}$. For $w\in PT_n$ and $1\leq i<j< k\leq n$, let $w_{i,j,k}$ be the pure twin obtained from $w$ by deleting all the strands except those indexed $i,j,k$. We can still view each $w_{i,j,k}$ as an element of $PT_n$ by adding trivial $(n-3)$ strands on its right. See Figure \ref{Decomposing a pure twin into Brunnian twins} for an example for $n=4$. Using ideas from \cite{MR324684}, we prove the following result.

\begin{proposition}\label{prop-D1}
For $n \ge 4$, $$D_{n-3,n}= \big\{ w \prod_{1\leq i<j< k\leq n} (w_{i, j, k}^{-1})^{c_{i,j,k}}~\mid ~w\in PT_n \big\},$$
where $c_{i,j,k} \in T_n$ is a coset representative of the permutation in $T_n/PT_n\cong S_n$ which takes $i , j, k$ to $1,2,3$, respectively, and fix everything else.
\end{proposition}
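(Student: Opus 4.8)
The plan is to realize $D_{n-3,n}$ as a common kernel of restriction maps and to show that the asserted set is exactly the image of a correction operator. For a triple $1\le a<b<c\le n$, let $\delta_{a,b,c}\colon PT_n\to PT_3$ denote the composite of those face maps $d_i$ that delete every strand other than the $a$-th, $b$-th and $c$-th. By Proposition \ref{bi-delta-set on tn} each $d_i$ is a homomorphism on pure twins, and the $\Delta$-identity \eqref{face map identity} makes this composite independent of the order of deletion; hence each $\delta_{a,b,c}$ is a well-defined homomorphism and, writing $w_{a,b,c}$ for the image of $\delta_{a,b,c}(w)$ under the standard inclusion $PT_3\hookrightarrow PT_n$, the definition of $k$-decomposability gives
$$
D_{n-3,n}=\bigcap_{1\le a<b<c\le n}\ker(\delta_{a,b,c}).
$$
I would then set $\Phi(w)=w\prod_{1\le i<j<k\le n}(w_{i,j,k}^{-1})^{c_{i,j,k}}$ and prove that $\{\Phi(w)\mid w\in PT_n\}$ equals this intersection.

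The computational heart is to evaluate $\delta_{a,b,c}$ on a single correction factor $(w_{i,j,k}^{-1})^{c_{i,j,k}}$. Geometrically $w_{i,j,k}$ is a pure twin whose crossings occur only among the first three strands, so $(w_{i,j,k})^{c_{i,j,k}}=c_{i,j,k}^{-1}w_{i,j,k}c_{i,j,k}$ is a twin whose crossings are confined to the strands indexed $i,j,k$. The key claim is that deletion of strands commutes with conjugation by the permutation twin $c_{i,j,k}$ up to relabeling the surviving strands and an inner automorphism of the target. If $\{a,b,c\}\neq\{i,j,k\}$, then at least one of the strands $i,j,k$ is deleted, leaving a twin active on at most two strands; since $PT_2$ is trivial, $\delta_{a,b,c}\big((w_{i,j,k})^{c_{i,j,k}}\big)=1$. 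If $\{a,b,c\}=\{i,j,k\}$, restriction recovers $w_{i,j,k}$ up to conjugacy in $PT_3$, and because $PT_3\cong\mathbb{Z}$ is abelian this ambiguity disappears, yielding $\delta_{a,b,c}\big((w_{a,b,c})^{c_{a,b,c}}\big)=\delta_{a,b,c}(w)$. In particular both the computation and the whole statement are independent of the chosen coset representatives $c_{i,j,k}$.

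For the inclusion of the image of $\Phi$ into $D_{n-3,n}$, fix a triple $\{a,b,c\}$ and apply $\delta_{a,b,c}$ to $\Phi(w)$. By the previous paragraph every factor indexed by $\{i,j,k\}\neq\{a,b,c\}$ maps to $1$, so only the factor indexed by $\{a,b,c\}$ survives and
$$
\delta_{a,b,c}(\Phi(w))=\delta_{a,b,c}(w)\,\delta_{a,b,c}(w)^{-1}=1.
$$
As the triple was arbitrary, $\Phi(w)\in D_{n-3,n}$. Conversely, if $v\in D_{n-3,n}$ then $v_{i,j,k}=\delta_{i,j,k}(v)=1$ for every triple, so each correction factor in $\Phi(v)$ is trivial and $\Phi(v)=v$; thus $v$ lies in the image of $\Phi$. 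The two inclusions give the claimed equality.

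The only delicate point is the commutation of strand deletion with conjugation by the non-pure twin $c_{i,j,k}$ used in the second paragraph. Although geometrically transparent, a rigorous justification requires propagating the twisted homomorphism identity \eqref{di not homo on tn} through the entire composite $\delta_{a,b,c}$, so that deleting a strand of $c_{i,j,k}^{-1}w_{i,j,k}c_{i,j,k}$ corresponds, after relabeling by $\nu(c_{i,j,k})$, to deleting the matching strand of $w_{i,j,k}$ and conjugating by the induced permutation twin on the three surviving strands. The facts that $PT_2$ is trivial and $PT_3$ is abelian are precisely what collapse this bookkeeping to the clean formula, so I would isolate this commutation as a lemma before assembling the proof.
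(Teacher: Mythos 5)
Your argument is correct and follows essentially the same route as the paper: the paper likewise observes that each $w_{i,j,k}\in \Brun(T_3)=PT_3$, asserts via ``a direct check'' that $\phi(w)=w\prod(w_{i,j,k}^{-1})^{c_{i,j,k}}$ lands in $D_{n-3,n}$, and then notes that $\phi$ restricts to the identity on $D_{n-3,n}$ (your converse step) to conclude surjectivity. Your write-up simply makes the omitted ``direct check'' explicit — including the key observations that restriction to a triple other than $\{i,j,k\}$ kills a correction factor since $PT_2=1$, and that the residual conjugator on the three surviving strands is a pure twin (as $c_{i,j,k}$ preserves the order of $i<j<k$), so abelianness of $PT_3$ removes the ambiguity.
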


\begin{proof}
In view of Proposition \ref{brun t3 is pt3}, we have $w_{i,j,k}\in \Brun(T_3)$. A direct check shows that for any $w \in PT_n$, the pure twin $$w \prod_{1\leq i<j< k\leq n} (w_{i, j, k}^{-1})^{c_{i,j,k}}$$ is a $(n-3)$-decomposable twin on $n$ strands. Note that the map $\phi:PT_n \to  D_{n-3,n}$ given by $$\phi(w)=  w \prod_{1\leq i<j< k\leq n} (w_{i, j, k}^{-1})^{c_{i,j,k}}$$ is a retraction, that is, the restriction of $\phi$ on $D_{n-3,n}$ is the identity map. Hence, it follows that each element of  $D_{n-3,n}$ arises in this fashion.
\end{proof}

\begin{corollary}\label{prop-BT4}
$\Brun (T_4)= \big\{ww_{1,2,3}^{-1} (w_{1,2,4}^{-1})^{t_3}(w_{1,3,4}^{-1})^{t_2t_3}(w_{2,3,4}^{-1})^{t_1t_2t_3} ~\mid ~ w\in PT_4 \big\}$.
\end{corollary}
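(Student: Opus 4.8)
The plan is to recognize Corollary \ref{prop-BT4} as nothing more than the specialization of Proposition \ref{prop-D1} to the case $n = 4$. Since a $1$-decomposable twin is by definition a Brunnian twin, we have $\Brun(T_4) = D_{1,4} = D_{n-3,n}$ when $n = 4$, so the general description furnished by Proposition \ref{prop-D1} applies verbatim. All that then remains is bookkeeping: enumerate the index set $\{(i,j,k) : 1 \le i < j < k \le 4\}$, which consists of the four triples $(1,2,3)$, $(1,2,4)$, $(1,3,4)$, $(2,3,4)$, and for each of them exhibit an explicit coset representative $c_{i,j,k} \in T_4$ of the permutation carrying $i,j,k$ to $1,2,3$ while fixing the one remaining index.

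First I would settle the permutation computation. For $(1,2,3)$ the relevant permutation is the identity, so $c_{1,2,3}$ may be taken to be the trivial twin and the corresponding factor is simply $w_{1,2,3}^{-1}$. For the three remaining triples I would write down the minimal-length words $t_3$, $t_2 t_3$, and $t_1 t_2 t_3$, and verify under the surjection $\nu : T_4 \to S_4$ (which sends $t_i$ to $\tau_i = (i,i+1)$, with words composed along the diagram from left to right) that $\nu(t_3)$ carries $1,2,4$ to $1,2,3$, that $\nu(t_2 t_3)$ carries $1,3,4$ to $1,2,3$, and that $\nu(t_1 t_2 t_3)$ carries $2,3,4$ to $1,2,3$, in each case fixing the leftover strand. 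This identifies the four conjugating elements as $c_{1,2,3} = 1$, $c_{1,2,4} = t_3$, $c_{1,3,4} = t_2 t_3$, and $c_{2,3,4} = t_1 t_2 t_3$. Substituting them into the product $\prod_{1 \le i < j < k \le 4} (w_{i,j,k}^{-1})^{c_{i,j,k}}$ and reading off the factors in lexicographic order then yields exactly
\[
w\, w_{1,2,3}^{-1}\, (w_{1,2,4}^{-1})^{t_3}\, (w_{1,3,4}^{-1})^{t_2 t_3}\, (w_{2,3,4}^{-1})^{t_1 t_2 t_3},
\]
which is the asserted description of $\Brun(T_4)$.

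Because the statement is a direct corollary, there is no genuine obstacle; the only point demanding care is the choice of coset representatives, where one must fix a consistent composition convention for $\nu$ so that the words $t_3$, $t_2 t_3$, $t_1 t_2 t_3$ realize the intended permutations (the opposite convention would instead produce their inverses). I would therefore declare the convention explicitly at the outset, confirm each of the four permutation actions by hand, and then the identification with Proposition \ref{prop-D1} is immediate.
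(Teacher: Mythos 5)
Your proposal is correct and matches the paper, which states this corollary without further argument precisely because it is the specialization of Proposition \ref{prop-D1} to $n=4$ with the coset representatives $c_{1,2,3}=1$, $c_{1,2,4}=t_3$, $c_{1,3,4}=t_2t_3$, $c_{2,3,4}=t_1t_2t_3$ that you identify. Your verification of the permutations under $\nu$ and your remark about fixing the composition convention are exactly the bookkeeping the paper leaves implicit.
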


Next, we describe a process of constructing $D_{k-1,n}$ from $D_{k,n}$. Let $w \in D_{k,n}$ and $1\leq i_1<i_2\cdots< i_{n-k+1}\leq n$. Let $w_{i_1,i_2,\ldots,i_{n-k+1}}$ be the pure twin obtained from $w$ by removing the $k-1$ strands except those indexed $i_1,i_2,\ldots,i_{n-k+1}$. Since $w \in D_{k,n}$, we have $w_{i_1,i_2,\ldots,i_{n-k+1}} \in \Brun(T_{n-k+1})$. The following result can be proved along the lines of Proposition \ref{prop-D1}.

\begin{proposition}\label{prop-D2}
For $n \ge 4$, 
$$D_{k-1,n}= \big\{ w \prod_{1\leq i_1<i_2\cdots< i_{n-k+1}\leq n} (w_{i_1,i_2,\ldots,i_{n-k+1}}^{-1})^{c_{i_1,i_2,\ldots,i_{n-k+1}}} ~\mid~ w \in D_{k,n} \},$$
where $c_{i_1,i_2,\ldots,i_{n-k+1}} \in T_n$ is a coset representative of the permutation in $T_n/PT_n\cong S_n$ which takes $i_1,i_2,\ldots,i_{n-k+1}$ to $1,2,\ldots, n-k+1$, respectively, and fix everything else.
\end{proposition}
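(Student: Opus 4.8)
The plan is to mimic the proof of Proposition~\ref{prop-D1} by realizing the right-hand set as the image of an explicit retraction $\phi \colon D_{k,n} \to D_{k-1,n}$. First I would record why the formula even makes sense: for $w \in D_{k,n}$, keeping the $n-k+1$ strands indexed by $I=\{i_1<\cdots<i_{n-k+1}\}$ amounts to deleting $k-1$ strands, and deleting one further strand (a total of $k$) trivializes $w$; hence $w_{i_1,\ldots,i_{n-k+1}}\in\Brun(T_{n-k+1})$, exactly as observed just before the statement. I then set
$$
\phi(w)=w\prod_{1\le i_1<\cdots<i_{n-k+1}\le n}(w_{i_1,\ldots,i_{n-k+1}}^{-1})^{c_{i_1,\ldots,i_{n-k+1}}},
$$
with the product taken in some fixed order, and note that each conjugated factor $(w_I^{-1})^{c_I}$ is supported only on the strands indexed by $I$, since $c_I$ carries $I$ onto $\{1,\ldots,n-k+1\}$ while fixing everything else.

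The key step is to verify $\phi(w)\in D_{k-1,n}$, that is, that keeping any $n-k+1$ strands indexed by a set $J$ trivializes $\phi(w)$. Writing $d_J\colon PT_n\to PT_{n-k+1}$ for the deletion homomorphism that keeps the strands in $J$ (a composition of the face maps $d_i$, hence a group homomorphism by Proposition~\ref{bi-delta-set on tn}), I would apply $d_J$ to the displayed product, so that it distributes factor by factor, and analyze each $d_J\big((w_I^{-1})^{c_I}\big)$ separately. For $I\neq J$ (both of size $n-k+1$) the set $I\setminus J$ is nonempty, so $d_J$ deletes at least one strand on which $(w_I)^{c_I}$ is genuinely supported; since $w_I$ is Brunnian on its $n-k+1$ strands, deleting even one of them already yields the trivial twin, whence this factor is $1$. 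For $I=J$ the factor recovers the full sub-twin of $w$ on $J$, which is precisely $d_J(w)^{-1}$ under the natural identification. Therefore exactly one factor survives and $d_J(\phi(w))=d_J(w)\,d_J(w)^{-1}=1$; the a~priori worrisome non-commutativity of $PT_n$ causes no trouble, because after applying $d_J$ at most a single factor is nontrivial and the order of the product becomes irrelevant.

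It remains to check that $\phi$ is a retraction onto $D_{k-1,n}$, which supplies the reverse inclusion and hence the asserted equality. If $w\in D_{k-1,n}$, then keeping any $n-k+1$ strands (equivalently, deleting any $k-1$) already gives the trivial twin, so every sub-twin $w_{i_1,\ldots,i_{n-k+1}}$ is trivial and all correction factors vanish; thus $\phi(w)=w$. Combined with the previous paragraph, this shows the image of $\phi$ is exactly $D_{k-1,n}$, yielding the stated description. The only genuinely delicate point is the bookkeeping in the key step, namely identifying the support of each conjugated sub-twin and invoking the Brunnian property to annihilate the off-diagonal terms $I\neq J$; but this is exactly the mechanism already used for Proposition~\ref{prop-D1}, now applied one level higher with $\Brun(T_{n-k+1})$ in place of $\Brun(T_3)$.
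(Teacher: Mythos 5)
Your proposal is correct and follows essentially the same route as the paper: the paper proves Proposition~\ref{prop-D2} ``along the lines of'' Proposition~\ref{prop-D1}, i.e.\ by exhibiting the map $\phi(w)=w\prod_I (w_I^{-1})^{c_I}$ as a retraction of $D_{k,n}$ onto $D_{k-1,n}$, which is exactly your argument. You have merely written out the ``direct check'' (applying the deletion homomorphism $d_J$ factor by factor and using the Brunnian property of each $w_I$ to kill the off-diagonal terms) that the paper leaves implicit.
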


Beginning with $PT_n=D_{n-2,n}=D_{n-1,n}$ and iterating the procedure of constructing $D_{k-1,n}$ from $D_{k,n}$, we can construct all Brunnian twins on $n$ strands.

\begin{figure}[H]
\includegraphics[height=10cm]{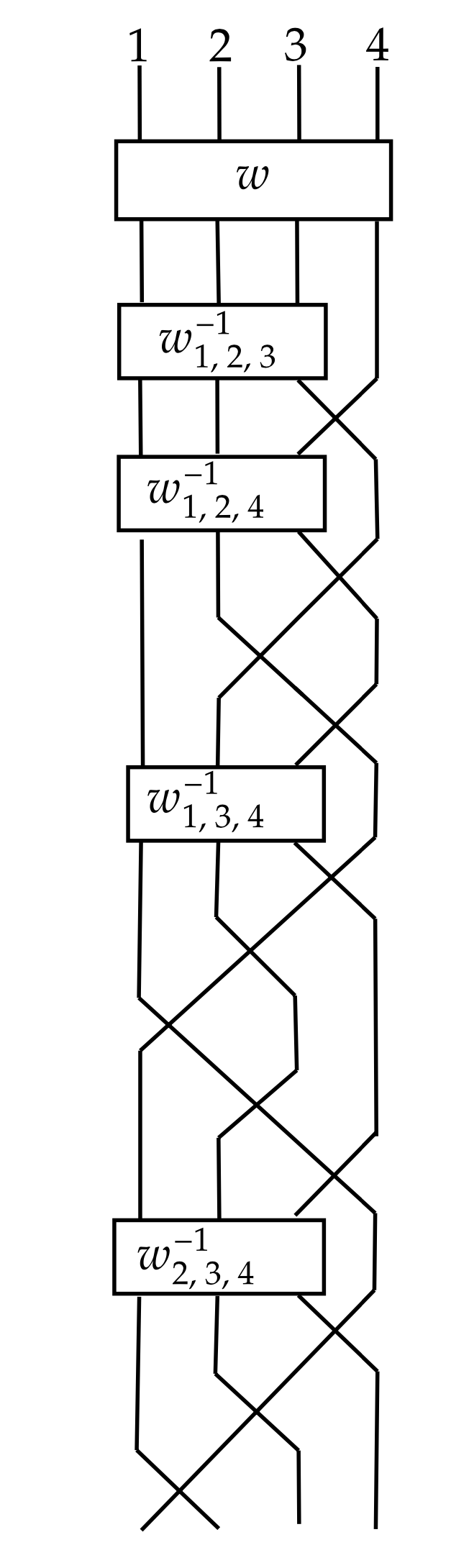}
\caption{Converting a pure twin into a Brunnian twin.}
\label{Decomposing a pure twin into Brunnian twins}
\end{figure}
\medskip

\subsection{Cohen twins}
Next, we consider another generalisation of Brunnian twins motivated by an idea due to Fred Cohen \cite{MR1349129}, and developed further for surface braid groups in \cite{MR3482589}. Recall that, for $0 \le i \le n-1$, the face map $d_i:T_n \to T_{n-1}$ deletes the $(i+1)$-st strand from the diagram of a twin. Although $d_i$ is not a group homomorphism,  it satisfies
\begin{equation}\label{di not homo on tn 2}
d_i(u w)= d_i(u)d_{\nu(u)(i+1)-1}(w),
\end{equation}
where $\nu:T_{n+1} \to S_{n+1}$ is the natural surjection.  For an arbitrary $u \in T_{n-1}$, we ask whether there exists $w \in T_n$ which is a solution of the system of equations
\begin{equation}\label{BTe2}
\left\{\begin{array}{l}
d_0(w)=u, \\
d_1(w)=u, \\
\vdots \\
d_{n-1}(w)=u.
\end{array}\right.
\end{equation}

Taking $u=1$ amounts to $w \in T_n$ being a Brunnian twin. 

\begin{definition}
A twin $w \in T_n$ is called a {\it Cohen twin} if   $d_0 (w)=d_1  (w)=\cdots=d_{n-1} (w)$. 
\end{definition}

For $n \ge 2$, let us set
$$
CT_n=\left\{w \in T_n ~\mid~ d_0 (w)=d_1  (w)=\cdots=d_{n-1} (w) \right\} .
$$
In other words, a twin on $n$ strands lie in $CT_n$ if it gives the same twin on $(n-1)$ strands after removing any one of its strands. For example, the twin $$\delta_n:=(t_1 t_2\cdots t_{n-1})(t_1 t_2\cdots t_{n-2})\cdots (t_1t_2)t_1$$ lies in $CT_n$ for all $n \geq 2$ and $d_0(\delta_n)=\delta_{n-1}$ (see Figure \ref{elements delta and gamma}).  Similarly, we define
$$
CPT_n = CT_n \cap PT_n=\left\{w \in PT_n ~\mid~ d_0 (w)=d_1 (w)=\cdots=d_{n-1} (w) \right\} .
$$
We refer to elements of $CPT_n$  as {\it pure Cohen twins}. For instance, the pure twin $$\gamma_n:=(t_1t_2\cdots t_{n-1})^n$$ lies in $CPT_n$ for all $n \geq 2$ and $d_0(\gamma_n)=\gamma_{n-1}$  (see Figure \ref{elements delta and gamma}). 
\begin{figure}[H]
\includegraphics[width=11cm]{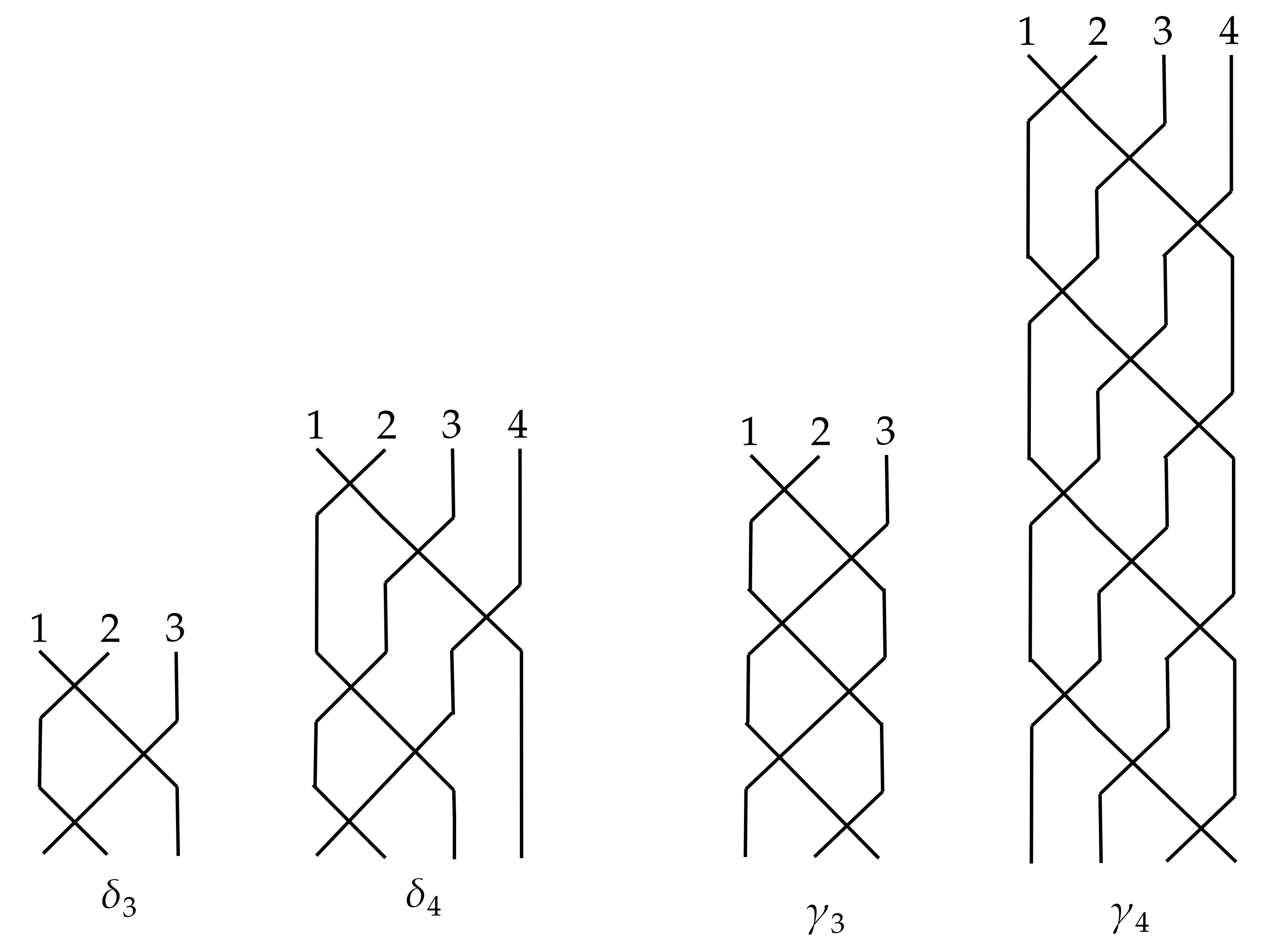}
\caption{Elements $\delta_3, \delta_4$ and $\gamma_3, \gamma_4$.} 
\label{elements delta and gamma}
\end{figure}

If $\phi, \psi: G \to H$ are group homomorphisms, then their {\it equalizer} is the subgroup of $G$ given by 
$$ \{g \in G ~\mid~ \phi(g)=\psi(g)\}.$$
Hence, $CPT_n$  is a subgroup of $PT_n$ being the equalizer of group homomorphisms $d_0, d_1, \ldots, d_{n-1}:PT_n\to PT_{n-1}$.

\begin{proposition}\label{homo from cptn to cptn-1}
The following assertions hold:
\begin{enumerate}
\item For each $0 \le i \le n-1$, $d_i(CPT_n) \subseteq CPT_{n-1}$ and the map  $d_0=d_1=\cdots=d_{n-1}:CPT_n\to CPT_{n-1}$ is a group homomorphism.
\item The set $CT_n$ is a subgroup of $T_n$. Moreover, for each $0 \le i \le n-1$, $d_i(CT_n) \subseteq CT_{n-1}$ and the map  $d_0=d_1=\cdots=d_{n-1}:CT_n\to CT_{n-1}$ is a group homomorphism.
\end{enumerate}
\end{proposition}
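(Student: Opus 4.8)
The plan is to exploit the fact that any $w$ in $CT_n$ (or in $CPT_n$) has all of its face maps $d_0(w), \ldots, d_{n-1}(w)$ equal; write $\bar w$ for this common value. With this notation, both containment statements $d_i(CT_n)\subseteq CT_{n-1}$ and $d_i(CPT_n)\subseteq CPT_{n-1}$ reduce to checking that $\bar w$ itself satisfies the Cohen condition one level down, and both homomorphism statements reduce to identifying $\overline{uw}$ for $u,w$ in the relevant group. Throughout I would use only two inputs from Section~\ref{section 2}: the $\Delta$-set identity $d_j d_i = d_i d_{j+1}$ for $j\ge i$, valid for the $d_i$ as set maps by Proposition~\ref{bi-delta-set on tn}, and the twisted Leibniz rule \eqref{di not homo on tn 2}.

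For part (1), fix $w\in CPT_n$ and set $v:=\bar w = d_i(w)$, which lies in $PT_{n-1}$ since each $d_i$ maps $PT_n$ into $PT_{n-1}$. To see $v\in CPT_{n-1}$ I must show $d_a(v)$ is independent of $a$ for $0\le a\le n-2$. Given $a\ge b$ I would compute
\[
d_a(v) = d_a d_b(w) = d_b d_{a+1}(w) = d_b(v),
\]
where the middle equality is the $\Delta$-identity and the outer two use $d_b(w)=d_{a+1}(w)=v$. Since $a\ge b$ was arbitrary, all $d_a(v)$ coincide, so $v\in CPT_{n-1}$. The homomorphism claim is then immediate: each $d_i$ restricts to a group homomorphism on $PT_n$ by Proposition~\ref{bi-delta-set on tn}, and on $CPT_n$ they all agree, so the common map $CPT_n\to CPT_{n-1}$ is a homomorphism.

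Part (2) is where the real work lies, because on $T_n$ the maps $d_i$ are not homomorphisms but only satisfy \eqref{di not homo on tn 2}. The key computation is that for $u,w\in CT_n$,
\[
d_i(uw) = d_i(u)\,d_{\nu(u)(i+1)-1}(w) = \bar u\,\bar w,
\]
the point being that as $i$ runs through $\{0,\ldots,n-1\}$ the index $\nu(u)(i+1)-1$ runs through the same set, because $\nu(u)$ is a permutation, while $w\in CT_n$ forces $d_j(w)=\bar w$ for every such $j$. This single identity shows simultaneously that $uw\in CT_n$ and that the common map $d_0=\cdots=d_{n-1}$ satisfies $\overline{uw}=\bar u\,\bar w$, that is, it is a homomorphism on $CT_n$. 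Applying the same rule to $u u^{-1}=1$ yields $d_j(u^{-1})=\bar u^{-1}$ for all $j$, so $u^{-1}\in CT_n$; together with $1\in CT_n$ this shows $CT_n$ is a subgroup. Finally, $d_i(CT_n)\subseteq CT_{n-1}$ follows verbatim from the $\Delta$-identity computation of part (1), since that argument never used that $d_i$ is a homomorphism.

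The main obstacle I anticipate is precisely the subgroup and homomorphism statement in part (2): individually the $d_i$ fail to be homomorphisms, and one might worry that the permutation $\nu(u)$ scrambles the face maps of $w$ in an uncontrollable way. The Cohen condition on $w$ is exactly what neutralizes this scrambling, since all face maps of $w$ coincide and the reshuffling by $\nu(u)$ becomes invisible. I would therefore take care to present this cancellation as the conceptual heart of the argument rather than as a routine verification.
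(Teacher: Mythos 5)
Your proposal is correct and follows essentially the same route as the paper: part (1) via the $\Delta$-identity $d_jd_i=d_id_{j+1}$ applied to the common value $\bar w$, and part (2) via the twisted rule \eqref{di not homo on tn 2}, with the Cohen condition on the second factor absorbing the reindexing by $\nu(u)$. The only cosmetic difference is that you verify closure under inverses using $uu^{-1}=1$ (which needs the bijectivity of $j\mapsto\nu(u)(j+1)-1$ that you already noted), whereas the paper uses $u^{-1}u=1$; both are fine.
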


\begin{proof}
Let $w \in CPT_n$ and $0 \le i \le n-1$. Then, using  \eqref{face map identity}, we obtain
\begin{equation}\label{proof equation 1}
d_j (d_i (w) )=d_j (d_0 (w) )=d_0 (d_{j+1} (w) )=d_0 (d_i (w) )
\end{equation}
for each $0 \le j \le n-2$, and hence  $d_i(CPT_n) \subseteq CPT_{n-1}$. That $d_0=d_1=\cdots=d_{n-1}:CPT_n\to CPT_{n-1}$ is a group homomorphism follows from Proposition \ref{bi-delta-set on tn}. 
\par 
For the second assertion, let $u, w \in CT_n$. By \eqref{di not homo on tn 2}, we have
\begin{equation}\label{pe1}
d_i(u w)=d_i(u) d_{\nu(u)(i+1)-1}(w)=d_0(u) d_{\nu(u)(1)-1}(w)=d_0(uw)
\end{equation}
for each $0 \leq i \leq n-1$, and hence $u w \in CT_n$. Further, the equation
$$
1=d_i\left(u^{-1} u\right)=d_i\left(u^{-1}\right) d_{\nu(u^{-1})(i+1)-1}(u)=d_i\left(u^{-1}\right) d_0(u),
$$
gives
$$
d_i\left(u^{-1}\right)=\left(d_0(u)\right)^{-1}
$$
for each $0 \leq i \leq n-1$, and hence $CT_n$ is a subgroup of $T_n$. The proof of $d_i(CT_n) \subseteq CT_{n-1}$  follows from \ref{proof equation 1}. Finally,  \eqref{pe1} also shows that $d_0=d_1=\cdots=d_{n-1}: CT_n \rightarrow CT_{n-1}$ is a group homomorphism.
\end{proof}

\begin{proposition}\label{CPTn<CTn}
$CPT_n$ is an index two subgroup of $CT_n$ for $n \ge 3$.
\end{proposition}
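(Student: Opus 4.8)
The plan is to identify the quotient $CT_n/CPT_n$ with a subgroup of the symmetric group via the second isomorphism theorem, and then show that this subgroup has order exactly two. Since $PT_n$ is normal in $T_n$ (being the kernel of $\nu$) and $CPT_n = CT_n \cap PT_n$, while $CT_n$ is a subgroup of $T_n$ by Proposition \ref{homo from cptn to cptn-1}, restricting $\nu$ to $CT_n$ gives
\[
CT_n/CPT_n = CT_n/(CT_n \cap PT_n) \cong \nu(CT_n) \le S_n.
\]
Thus it suffices to prove $|\nu(CT_n)| = 2$. One containment is immediate: the pure Cohen twin $\gamma_n$ yields $1 \in \nu(CT_n)$, while the Cohen twin $\delta_n$ satisfies $\nu(\delta_n) = \omega_n$, the order-reversing permutation $i \mapsto n+1-i$ (its defining word being the standard reduced expression for the longest element of $S_n$), so that $\{1, \omega_n\} \subseteq \nu(CT_n)$ and $|\nu(CT_n)| \ge 2$.

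For the reverse inequality, I would first record that $\nu$ intertwines the face maps on twins with the combinatorial face maps on permutations: if $\bar{d}_i : S_n \to S_{n-1}$ denotes the map that deletes the $(i+1)$-st entry of the one-line notation and standardises the result, then $\nu \circ d_i = \bar{d}_i \circ \nu$, which is clear from the geometric description of strand deletion. Consequently, any $\sigma = \nu(w)$ with $w \in CT_n$ satisfies $\bar{d}_0(\sigma) = \bar{d}_1(\sigma) = \cdots = \bar{d}_{n-1}(\sigma)$, so the claim reduces to a statement about permutations alone.

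The crux is then a purely combinatorial lemma: for $n \ge 3$, the only permutations $\sigma \in S_n$ all of whose single-point deletion patterns coincide are $\sigma = 1$ and $\sigma = \omega_n$. I would prove this by comparing deletions at adjacent positions. Writing out the two standardised sequences obtained by deleting positions $i$ and $i+1$, one sees that they agree in every entry coming from $\{\sigma(k) : k \ne i,\, i+1\}$ precisely when no value lies strictly between $\sigma(i)$ and $\sigma(i+1)$, and in that case the entry at the merged position matches as well; hence $\bar{d}_{i-1}(\sigma) = \bar{d}_i(\sigma)$ if and only if $|\sigma(i) - \sigma(i+1)| = 1$. Imposing this for every $1 \le i \le n-1$ forces each consecutive pair of values to differ by exactly one, so the sequence $\sigma(1), \sigma(2), \ldots, \sigma(n)$ is strictly monotone (a single change of direction would repeat a value). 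Being a permutation of $\{1, \ldots, n\}$, it must be $1, 2, \ldots, n$ or $n, n-1, \ldots, 1$, that is, $\sigma \in \{1, \omega_n\}$.

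Combining the two directions gives $\nu(CT_n) = \{1, \omega_n\}$, a group of order two, whence $[CT_n : CPT_n] = 2$. I expect the main obstacle to be the combinatorial lemma, although the adjacent-deletion computation reduces it to the elementary observation about strictly monotone value sequences; a secondary point requiring care is verifying the intertwining identity $\nu \circ d_i = \bar{d}_i \circ \nu$ together with $\nu(\delta_n) = \omega_n$.
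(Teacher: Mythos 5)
Your proposal is correct, and it shares the paper's overall skeleton: both reduce the statement to computing $\nu(CT_n)\le S_n$ via the commuting square $\nu\circ d_i=\bar{d}_i\circ\nu$, and both use $\delta_n$ to see that this image is nontrivial. The difference lies in the key step bounding $|\nu(CT_n)|$ from above. The paper sets $CS_n=\nu(CT_n)$ and argues inductively: the restriction $\bar{d}_0\colon CS_n\to CS_{n-1}$ has kernel $\bigcap_i\ker\bar{d}_i$, which is trivial for $n\ge 3$ (a ``Brunnian'' permutation is trivial), so iterating gives an embedding $CS_n\hookrightarrow CS_2\cong\mathbb{Z}_2$. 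You instead prove a sharper, purely combinatorial classification: the only $\sigma\in S_n$ with $\bar{d}_0(\sigma)=\cdots=\bar{d}_{n-1}(\sigma)$ are the identity and the longest element $\omega_n$, obtained by checking that adjacent deletions agree exactly when $|\sigma(i)-\sigma(i+1)|=1$ and that imposing this for all $i$ forces monotonicity. Your computation of the adjacent-deletion criterion is right (every integer strictly between $\sigma(i)$ and $\sigma(i+1)$ is some $\sigma(k)$, so the ranks shift), and the monotonicity conclusion is sound. What your route buys is an explicit identification $\nu(CT_n)=\{1,\omega_n\}$ rather than merely an order bound; what the paper's route buys is brevity, since it delegates the combinatorics to the single observation that $\bigcap_i\ker\bar{d}_i=1$ and lets the group structure do the rest. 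Both are complete proofs.
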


\begin{proof}
The topological interpretation of elements of $T_n$ can be applied to elements of $S_n$ as well by allowing triple intersection points. Thus, for each $0 \leq i \leq n-1$, there is a map $\bar{d_i}: S_n \to S_{n-1}$ (thought of as deleting the $(i+1)$-st strand) such the following diagram commutes
$$
\begin{tikzcd}
	{PT_n} & {T_n} & {S_n} \\
	{PT_{n-1}} & {T_{n-1}} & {S_{n-1}}
	\arrow[hook, from=1-1, to=1-2]
	\arrow[hook, from=2-1, to=2-2]
	\arrow["{\nu_n}",two heads, from=1-2, to=1-3]
	\arrow["{\nu_{n-1}}",two heads, from=2-2, to=2-3]
	\arrow["{d_i}", from=1-1, to=2-1]
	\arrow["{d_i}", from=1-2, to=2-2]
	\arrow["{\bar{d_i}}", from=1-3, to=2-3].
\end{tikzcd}
$$
Set $CS_n:=\nu_n(CT_n)$ for each $n \ge 2$. Note that $CS_2=\nu_2(T_2)=S_2 \cong \mathbb Z_2$. The commutativity of the preceding diagram shows that every $\tau \in CS_n$ satisfy $\bar{d_0}(\tau)=\bar{d_1}(\tau)= \cdots= \bar{d}_{n-1}(\tau)$. By Proposition \ref{homo from cptn to cptn-1}(2), we have $d_0(CT_n)\subseteq CT_{n-1}$. The commutativity of the preceding diagram implies that $\bar{d_0}(CS_n)= \bar{d_0}\nu_n(CT_n)= \nu_{n-1}d_0(CT_n) \subseteq \nu_{n-1}(CT_{n-1})= CS_{n-1}$.  Thus, for $n\geq 3$,  the restriction of the map $\bar{d_0}:S_n \to S_{n-1}$  induces a map $\bar{d_0} :CS_n  \to CS_{n-1}$ such that $\ker(\bar{d_0})=\cap_{i=0}^{n-1} \ker(\bar{d_i})$. Direct computation gives $\ker(\bar{d_0})=1$, and hence the map $\bar{d_0}\cdots \bar{d_0}:CS_n\to CS_2$ is injective. Since $\nu_n(\delta_n)\neq 1$, we have $CT_n/CPT_n \cong CS_n\cong \mathbb Z_2$, and the proof is complete. 
\end{proof}

The following result follows along the lines of \cite[Lemma 2.10]{MR1955357}.

\begin{proposition}\label{surcptn}
For each $1 \leq k \leq n-1$, the map $$\underbrace{d_0 \cdots d_0}_{(n-k)~ \text{times}}:CPT_n \to CPT_{k}$$ is surjective. In particular, 
 $d_0:CPT_n \to CPT_{n-1}$ is surjective for $n \ge 2$.
\end{proposition}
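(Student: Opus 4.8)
The plan is to first reduce the statement to the single-step surjectivity of the bottom face, and then to prove that step by an induction that mirrors the shift (\emph{d\'ecalage}) of a simplicial object. By Proposition~\ref{homo from cptn to cptn-1}(1), for each $m$ the bottom face restricts to a group homomorphism $d_0\colon CPT_m \to CPT_{m-1}$, and the map in the statement is the composite of $n-k$ such homomorphisms. Since a composite of surjections is a surjection, it suffices to show that $d_0\colon CPT_m \to CPT_{m-1}$ is onto for every $m$; the ``in particular'' clause is exactly the case $k=n-1$.

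For the single step, fix $u\in CPT_{m-1}$ and look for a preimage. The coface map $d^0$ is a section of $d_0$ by identity (4) of Proposition~\ref{bi-delta-set on tn}, so $d^0(u)$ already satisfies $d_0(d^0(u))=u$. Using identity (5) in the form $d_j d^0 = d^0 d_{j-1}$ for $j\ge 1$, together with the Cohen relations $d_{j-1}(u)=d_0(u)$, one finds $d_j(d^0(u))=d^0(d_0(u))$ for all $j\ge 1$; thus $d^0(u)$ is Cohen at every face except the $0$-th. I would then correct it multiplicatively: set $c:=u\,(d^0 d_0(u))^{-1}$ and seek $z\in\ker(d_0)$ with $d_1(z)=\cdots=d_{m-1}(z)=c$, so that $w:=z\cdot d^0(u)$ satisfies $d_0(w)=u$ and $d_j(w)=c\cdot d^0 d_0(u)=u$ for $j\ge 1$, i.e. $w\in CPT_m$ maps to $u$. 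A short computation with identity (4) gives $d_0(c)=1$, so $c$ lies in $\ker(d_0)\le PT_{m-1}$, and a further computation (using $d_0(u)\in CPT_{m-2}$) shows that $d_1(c)=\cdots=d_{m-2}(c)$.

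The remaining problem, producing the correction $z$, is the heart of the argument, and I would resolve it by recognising it as the \emph{same} statement one simplicial degree lower. Put $W_m:=\ker(d_0\colon PT_m\to PT_{m-1})$. From $d_0 d_j=d_{j-1}d_0$ for $j\ge 1$ and $d_0 d^{i+1}=d^i d_0$ for $i\ge 0$, one checks that the restrictions $\partial_i:=d_{i+1}$ and $\partial^i:=d^{i+1}$ turn $\{W_m\}$ into a bi-$\Delta$-group with exactly one fewer face at each level, whose Cohen subgroups $CW_m$ consist of those $z\in W_m$ with $\partial_0(z)=\cdots=\partial_{m-2}(z)$. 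The computations of the previous paragraph say precisely that $c\in CW_{m-1}$, and finding $z$ amounts to surjectivity of the bottom face $\partial_0\colon CW_m\to CW_{m-1}$ of this derived bi-$\Delta$-group. I would therefore run the whole argument as an induction on the number of faces: the inductive hypothesis applied to $\{W_m\}$ supplies $z$, and the base case is a bi-$\Delta$-group with a single face, where the Cohen condition is vacuous and the one face is split surjective by its coface.

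The step I expect to demand the most care is the verification that $\{W_m,\partial_i,\partial^i\}$ is genuinely a bi-$\Delta$-group, namely that the restricted faces and cofaces are well defined (land in the correct kernels) and satisfy all five identities with shifted indices, together with the bookkeeping that places $c$ in $CW_{m-1}$. Once this shift is set up, the rest is the formal assembly of $w=z\cdot d^0(u)$ above, and composing the single steps yields the stated surjectivity of $\underbrace{d_0\cdots d_0}_{(n-k)\ \text{times}}$.
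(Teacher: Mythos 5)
Your argument is correct, but it takes a genuinely different route from the paper's. The paper (adapting Wu's Lemma~2.10) fixes $n$ and inducts on $k$, proving surjectivity of the composite $d_{n-k,n}=d_0\cdots d_0$ directly: for $w\in CPT_k$ with $d_0(w)=1$ it exhibits an explicit preimage $w_{k,n}=\prod_{0\le i_1<\cdots<i_{n-k}\le n-1} d^{i_{n-k}}\cdots d^{i_1}(w)$ (product over increasing multi-indices in lexicographic order), and for general $w$ it first corrects in the \emph{target}, replacing $w$ by $w\,d_{n-k,n}(\gamma)^{-1}$ with $\gamma$ supplied by the inductive hypothesis. You instead reduce at the outset to the single face $d_0\colon CPT_m\to CPT_{m-1}$, correct in the \emph{source} (the defect $c=u\,(d^0d_0(u))^{-1}$ of the naive lift $d^0(u)$), and dispose of the resulting kernel problem by recursion through the shifted bi-$\Delta$-group $W_m=\ker(d_0)$. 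Your d\'ecalage does require the verifications you flag --- that $\partial_i=d_{i+1}$ and $\partial^i=d^{i+1}$ preserve the kernels and satisfy the shifted identities, which follow from $d_id_0=d_0d_{i+1}$ and $d_0d^{i+1}=d^id_0$ --- and the induction must be phrased as a strong induction on simplicial degree quantified over \emph{all} bi-$\Delta$-groups, so that the hypothesis may be applied to $W_*$ (whose bottom levels are trivial here, making the base case harmless). Once that is set up, the computation $d_j(c)=d_0(u)\,(d^0d_0d_0(u))^{-1}$ for $j\ge 1$ places $c$ in the derived Cohen subgroup and the assembly $w=z\cdot d^0(u)$ closes the argument. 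What each approach buys: the paper's produces explicit preimages (useful for concrete computations such as the $w_{1,4}$ example it records), while yours is more structural and in fact establishes the statement for an arbitrary bi-$\Delta$-group rather than only for $\{PT_{n+1}\}_{n\ge 0}$.
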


\begin{proof}
Let us set $d_{n-k,n}=\underbrace{d_0 \cdots d_0}_{(n-k)~ \text{times}}$. We use induction on $k$. Clearly, for $k=1$, the map $d_{n-1,n}:CPT_n\to CPT_{1}$ is surjective. Assume that $d_{n-k+1, n}$ is surjective with $k>1$, and let $w \in CPT_k$.
\par 
Case 1: Suppose that $w \in \ker(d_0:CPT_k \to CPT_{k-1})$. Then consider the element
$$
w_{k, n}=\prod_{0 \leq i_1<i_2<\cdots<i_{n-k} \leq n-1} d^{i_{n-k}} d^{i_{n-k-1}} \cdots d^{i_1} (w)
$$
of  $PT_n$ with lexicographic order on the indices from the right. Since $w \in \ker(d_0:CPT_k \to CPT_{k-1})$, a straightforward computation shows that $w_{k,n} \in CPT_n$ and $d_{n-k,n}(w_{n,k})=w$. For instance, taking $n=4$ and $k=1$, we have
$$
w_{1, 4}=\prod_{0 \leq i_1<i_2<i_{3} \leq 3} d^{i_{3}} d^{i_{2}} d^{i_1} (w)
$$
with lexicographic order from the right. Note that $(i_1, i_2, i_3)\in \{(0,1,2), (0,1,3), (0,2,3), (1,2,3)\}$ and  $w_{1,4} = d^2d^1d^0(w)~ d^3d^1d^0(w)~ d^3d^2d^0(w)~ d^3d^2d^1(w).$ Direct computations give
\begin{eqnarray*}
d_0(w_{1,4}) &=& d^1d^0(w)~ d^2d^0(w)~ d^2d^1(w)~ d^2d^1d^0(d_0(w)),\\
d_1(w_{1,4}) &=& d^1d^0(w)~ d^2d^0(w)~ d^2d^1d^0(d_0(w))~ d^2d^1(w),\\
d_2(w_{1,4}) &=& d^1d^0(w)~ d^2d^1d^0(d_0(w))~ d^2d^0(w)~ d^2d^1(w),\\
d_3(w_{1,4}) &=& d^2d^1d^0(d_0(w))~ d^1d^0(w)~ d^2d^0(w)~ d^2d^1(w).
\end{eqnarray*}
Since $w \in \ker(d_0:CPT_k \to CPT_{k-1})$, $d^2d^1d^0(d_0(w))=1$, and hence $w_{1,4}\in CPT_{4}$.

\par
Case 2: Now, suppose that $1 \ne \delta = d_0(w) \in CPT_{k-1}$. By induction hypothesis, there exists $\gamma\in CPT_n$ such that $d_{n-k+1, n}(\gamma)= d_0(d_{n-k,n}(\gamma))=\delta$. Note that $$w \, d_{n-k,n}(\gamma)^{-1} \in \ker(d_0:CPT_k \to CPT_{k-1}).$$
 Thus, by  Case 1, there exists $\lambda \in CPT_n$ such that $$d_{n-k,n}(\lambda)=w\, d_{n-k,n}(\gamma)^{-1},$$ and hence  $d_{n-k,n}(\lambda \gamma) = w$. This proves that the map $d_{n-k, n}$ is surjective. 
\end{proof}

\begin{proposition}
The map $d_0:CT_n \to CT_{n-1}$ is surjective for each $n \ge 2$.
\end{proposition}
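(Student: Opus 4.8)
The plan is to reduce the surjectivity of $d_0 : CT_n \to CT_{n-1}$ to the already-established surjectivity of $d_0 : CPT_n \to CPT_{n-1}$ (Proposition \ref{surcptn}), exploiting the index-two relationship between $CPT_n$ and $CT_n$ from Proposition \ref{CPTn<CTn}. Since the pure case is already in hand, the only genuinely new content is to lift the \emph{non-pure} Cohen twins in $CT_{n-1}$, and for this the explicit element $\delta_n \in CT_n$ with $d_0(\delta_n)=\delta_{n-1}$ will serve as a section over the quotient $\mathbb{Z}_2$.

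Assume first $n \ge 3$. By Proposition \ref{CPTn<CTn}, $CPT_n$ has index two in $CT_n$, and since $\nu_n(\delta_n)\neq 1$ the element $\delta_n$ lies outside $CPT_n$; hence there is a coset decomposition $CT_n = CPT_n \sqcup CPT_n\,\delta_n$, and likewise $CT_{n-1} = CPT_{n-1} \sqcup CPT_{n-1}\,\delta_{n-1}$. Now I would apply $d_0$, recalling from Proposition \ref{homo from cptn to cptn-1}(2) that $d_0 : CT_n \to CT_{n-1}$ is a \emph{group homomorphism}. Combining $d_0(CPT_n)=CPT_{n-1}$ (Proposition \ref{surcptn} together with the inclusion of Proposition \ref{homo from cptn to cptn-1}(1)) with $d_0(\delta_n)=\delta_{n-1}$, it follows that
\[
d_0(CT_n) = d_0(CPT_n) \cup d_0(CPT_n)\,d_0(\delta_n) = CPT_{n-1} \cup CPT_{n-1}\,\delta_{n-1} = CT_{n-1},
\]
which is the desired surjectivity. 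Equivalently, one may run a five-lemma-type diagram chase on the ladder of short exact sequences $1 \to CPT_m \to CT_m \to CS_m \to 1$ for $m=n,\,n-1$, in which the left vertical map $d_0$ is surjective and the right vertical map $\bar{d_0}:CS_n\to CS_{n-1}$ is an isomorphism of copies of $\mathbb{Z}_2$ (it sends $\nu_n(\delta_n)$ to $\nu_{n-1}(\delta_{n-1})\neq 1$).

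For the base case $n=2$ the statement is immediate, as $d_0 : CT_2 = T_2 \to T_1$ maps onto the trivial group. The main conceptual obstacle is precisely the passage from the pure subgroup to the full Cohen twin group, i.e.\ lifting a non-pure target; this is resolved entirely by the single section $\delta_n$. The one point requiring care is to confirm that $\delta_{n-1}$ genuinely represents the non-trivial coset of $CPT_{n-1}$ in $CT_{n-1}$, which holds because $\nu_{n-1}(\delta_{n-1})\neq 1$ forces $\delta_{n-1}\notin PT_{n-1}\supseteq CPT_{n-1}$. Once the multiplicativity of $d_0$ on $CT_n$ (Proposition \ref{homo from cptn to cptn-1}(2)) is invoked, the argument collapses to the one-line coset computation above.
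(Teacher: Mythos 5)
Your proof is correct and follows essentially the same route as the paper: decompose $CT_{n-1}$ into the two cosets of $CPT_{n-1}$ using Proposition \ref{CPTn<CTn}, lift the pure part via Proposition \ref{surcptn}, and lift the non-pure coset using the explicit section $\delta_n$ with $d_0(\delta_n)=\delta_{n-1}$. The only cosmetic difference is that you phrase the final step as a one-line coset computation using the multiplicativity of $d_0$ on $CT_n$, whereas the paper lifts an arbitrary element $w\in\delta_{n-1}CPT_{n-1}$ directly.
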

\begin{proof}
In view of  Proposition \ref{CPTn<CTn}, we can write $CT_{n-1}= CPT_{n-1} \cup \delta_{n-1}CPT_{n-1}$. Let us take $w \in CT_{n-1}$. If $w \in CPT_{n-1}$,  then by Proposition \ref{surcptn}, there exists an $u \in CPT_n$ such that $d_0(u)=w$. If $w \in  \delta_{n-1}CPT_{n-1}$, then again by Proposition \ref{surcptn}, there exists $v \in CPT_n$, such that $d_0(v)=\delta_{n-1}^{-1}w$, and hence $d_{0}(\delta_n v)=w$. This complete the proof.  
\end{proof}

Thus, we obtain the following short exact sequences
$$
1\to \Brun(T_n) \to CT_n \to CT_{n-1} \to 1
$$
and 
$$
1\to \Brun(T_n) \to CPT_n \to CPT_{n-1} \to 1.
$$
Observe that $CPT_2=\Brun(T_2)=PT_2=1$ and $CPT_3=\Brun(T_3)=PT_3=\langle (t_1t_2)^3\rangle \cong \mathbb{Z}$. Thus, the preceding exact sequence gives $CPT_4=\Brun(T_4)\rtimes \langle (t_1t_2)^3 \rangle.$

\begin{theorem}\label{P1pure}
For each $u \in PT_{n-1}$ or $u \in T_{n-1}$, the system of equations
\begin{equation}
\left\{\begin{array}{l}
d_0(w)=u, \\
d_1(w)=u, \\
\vdots \\
d_{n-1}(w)=u,
\end{array}\right.
\end{equation} 
has a solution if and only if $u$ satisfies the condition
$$d_0(u)=d_1(u)=\cdots=d_{n-2}(u).$$
\end{theorem}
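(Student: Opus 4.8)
The plan is to recast the system as a single membership question. A solution $w$ is, by definition, an element of $T_n$ (respectively $PT_n$ when $u \in PT_{n-1}$) satisfying $d_0(w) = \cdots = d_{n-1}(w) = u$; the chain of equalities is precisely the condition $w \in CT_n$ (respectively $w \in CPT_n$), and the common value being $u$ is the single requirement $d_0(w) = u$. Thus the system is solvable if and only if $u$ lies in the image $d_0(CT_n)$ (respectively $d_0(CPT_n)$), and the theorem reduces to the identity $d_0(CT_n) = CT_{n-1}$ together with its pure analogue.

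For necessity, I would assume a solution $w$ exists and verify the stated condition on $u$ directly from the simplicial identity \eqref{face map identity}. For each $0 \le j \le n-2$, the relation $d_j d_0 = d_0 d_{j+1}$ (valid since $j \ge 0$) gives
\begin{equation*}
d_j(u) = d_j(d_0(w)) = d_0(d_{j+1}(w)) = d_0(u),
\end{equation*}
the last equality holding because $d_{j+1}(w) = u$ for $1 \le j+1 \le n-1$. Hence $u \in CT_{n-1}$ (respectively $CPT_{n-1}$), which is exactly the inclusion $d_0(CT_n) \subseteq CT_{n-1}$ recorded in Proposition \ref{homo from cptn to cptn-1}.

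For sufficiency, I would appeal to the surjectivity results proved just above. If $u$ satisfies $d_0(u) = \cdots = d_{n-2}(u)$, then $u \in CT_{n-1}$, and the surjectivity of $d_0 : CT_n \to CT_{n-1}$ (in the pure case, of $d_0 : CPT_n \to CPT_{n-1}$ from Proposition \ref{surcptn}) furnishes $w \in CT_n$ with $d_0(w) = u$. Since membership in $CT_n$ forces $d_i(w) = d_0(w) = u$ for every $0 \le i \le n-1$, this $w$ solves the system.

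The only substantive content lies in the sufficiency direction, namely the surjectivity of $d_0$ onto $CT_{n-1}$; the genuine obstacle there, the explicit construction of a Cohen twin mapping onto a prescribed element, has already been cleared in the surjectivity propositions through the inductive coface-product element. Once the problem is reformulated as the identity $d_0(CT_n) = CT_{n-1}$, the argument is simply an assembly of Proposition \ref{homo from cptn to cptn-1} with that surjectivity, and no new difficulty remains.
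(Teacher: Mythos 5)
Your proposal is correct and follows essentially the same route as the paper: necessity via the inclusion $d_i(CPT_n)\subseteq CPT_{n-1}$ from Proposition \ref{homo from cptn to cptn-1} (you simply unwind the simplicial identity that proves it), and sufficiency via the surjectivity of $d_0$ on $CT_n$ and $CPT_n$ from Proposition \ref{surcptn} and the proposition following it. The reformulation as the identity $d_0(CT_n)=CT_{n-1}$ is a tidy packaging but not a different argument.
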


\begin{proof}
Let $u \in PT_{n-1}$ such that  the system of equations \eqref{BTe2} has a solution. Then there exists $w \in PT_n$ such that $d_0 (w)=\cdots=d_{n-1} (w)=u$. It follows from Proposition \ref{homo from cptn to cptn-1} that $u \in CPT_{n-1}$, and hence $d_0( u)=\cdots=d_{n-2}(u)$. Conversely, suppose that $d_0 (u)=\cdots=d_{n-2} (u)$, that is, $u \in CPT_{n-1}$. By Proposition \ref{surcptn}, $d_0:CPT_n \to CPT_{n-1}$ is surjective, and hence there exists $w \in CPT_n$ which is a solution  to \eqref{BTe2}. The proof for the case when $u \in T_{n-1}$ is similar.
\end{proof}
\medskip

\section{Brunnian doodles on the 2-sphere}\label{section 5}

Note that the closure of a Brunnian braid is a Brunnian link. The converse is not true and there exist Brunnian links that cannot be obtained as the closure of Brunnian braids (see \cite{MR3200492}). The same scenario occurs with doodles on the 2-sphere. Consider the Brunnian doodle on the 2-sphere as shown in Figure \ref{A Brunnian doodle}. The main result of this section will show that this Brunnian doodle cannot be realised as the closure of a Brunnian twin. 
\begin{figure}[H]
\includegraphics[height=3.8cm]{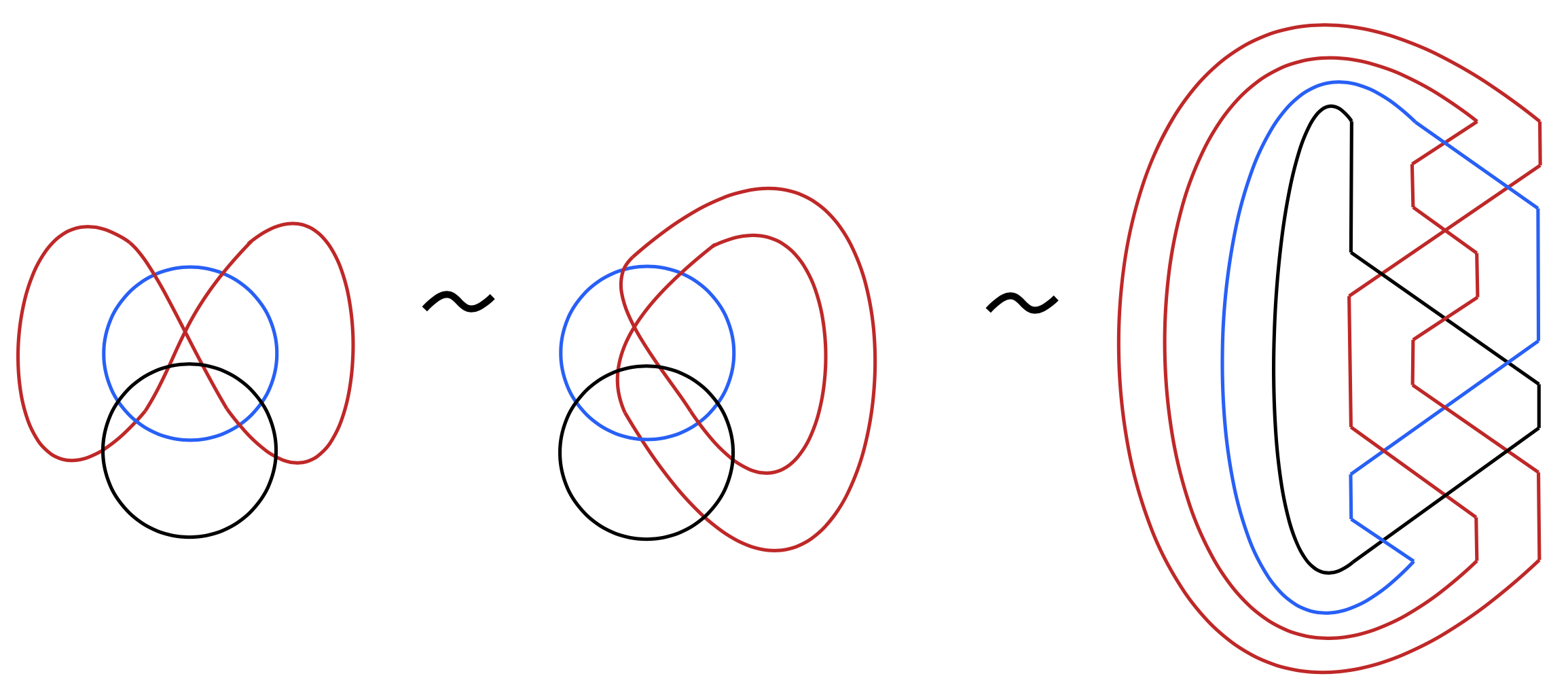}
\caption{A Brunnian doodle which is not the closure of a Brunnian twin.}
\label{A Brunnian doodle}
\end{figure}

\begin{definition}
A doodle diagram on the 2-sphere is called minimal if it has no monogons and bigons.
\end{definition}

\begin{theorem}\cite[Theorem 2.2]{MR1370644}\label{T2.2}
Any doodle has a unique (up to the transformation shown in Figure \ref{doodle move}) minimal doodle diagram with a minimal number of intersection points. Further, this minimal doodle diagram can be constructed from any other doodle diagram by applying Reidemeister moves $R1$ and $R2$ that reduce the number of intersection points.
\end{theorem}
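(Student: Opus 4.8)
The plan is to treat a doodle diagram combinatorially, as a $4$-valent graph $\Gamma$ embedded in $S^2$ whose vertices are the transverse double points and whose complementary regions are open discs. Writing $V$ for the number of crossings, every vertex has degree $4$, so the number of edges is $E = 2V$, and for a connected diagram Euler's formula $V - E + F = 2$ gives $F = V + 2$. A monogon is a face incident to a single edge and a bigon a face incident to two edges; the moves $R1$ and $R2$, read in the reducing direction, delete a monogon and a bigon respectively, each strictly lowering $V$.

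Existence of a minimal diagram is then immediate. Starting from any diagram of the doodle, I would apply $R1$ and $R2$ in the reducing direction; since $V$ is a non-negative integer and each such move strictly decreases it, the process terminates after finitely many steps at a diagram $D_0$ carrying neither monogons nor bigons. Call such a diagram \emph{reduced}.

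The substantive content is that a reduced diagram actually realises the minimum number of crossings within its doodle class, and that it is unique up to ambient isotopy of $S^2$ together with the move of Figure \ref{doodle move}. For minimality I would pass to the immersed-curve viewpoint: a doodle is a generic immersion of a disjoint union of circles, and a reduced diagram is precisely one in which these curves are in minimal position, that is, no two arcs cobound an innermost bigon and no arc cuts off an innermost monogon. The classical innermost-disc argument, in the spirit of Hass--Scott on intersections of curves on surfaces, then shows that any diagram carrying more crossings than the geometric intersection data permit must contain an innermost bigon or monogon, which can be removed by $R2$ or $R1$; hence a reduced diagram attains the minimal $V$. The one feature special to $S^2$ that must be handled separately is the presence of null-homotopic components and of the ``outer'' region, so trivial circles and the global disc structure require a direct check.

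For uniqueness I would take two reduced diagrams $D_0$ and $D_0'$ of the same doodle, realise the doodle equivalence as an ambient homotopy of the underlying curves, and put the two families into general position. Tracking the trace of the homotopy and analysing its innermost complementary discs shows that, after an isotopy of $S^2$, the two diagrams differ only by the crossing-number-preserving move of Figure \ref{doodle move}. The main obstacle is exactly this analysis: both the proof that no hidden reduction is possible (minimality) and the proof that the residual ambiguity is captured by a single move (uniqueness) rest on careful bookkeeping of the complementary regions under homotopy, and it is the non-trivial global topology of the sphere, where an innermost disc may be the complement of another disc, that demands the most care.
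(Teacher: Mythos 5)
First, a point of context: the paper does not prove this statement at all --- it is quoted verbatim from Khovanov's \emph{Doodle groups} (Theorem 2.2 of that paper) and used as a black box, so there is no internal proof to compare yours against. Judged on its own terms, your proposal correctly isolates the three things that need proving (existence of a reduced diagram, minimality of a reduced diagram, uniqueness up to the special move), and the existence step --- monotone reduction of the crossing number terminates --- is complete. But the other two steps, which are the entire content of the theorem, are not proved; they are described. Saying that ``the classical innermost-disc argument, in the spirit of Hass--Scott, then shows\dots'' and that ``tracking the trace of the homotopy and analysing its innermost complementary discs shows\dots'' is a statement of intent, not an argument, and you concede as much when you write that the main obstacle is exactly this analysis.

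There is also a concrete reason the Hass--Scott machinery cannot simply be imported. Those results concern curves on surfaces up to free homotopy and assert that non-minimal position forces an (innermost) embedded monogon or bigon. Doodle equivalence is strictly finer: two diagrams are equivalent only if they are joined by a homotopy that avoids triple points throughout (equivalently, by a sequence of $R1$ and $R2$ moves). A priori, a diagram with no monogon or bigon \emph{faces} might still be reducible via a zig-zag of moves that first increases the crossing number, and ruling this out is precisely what the second sentence of the theorem claims; you cannot invoke minimal-position results for free homotopy to settle a question about the finer equivalence without showing that the reducing homotopy can be chosen to avoid triple points. Likewise, the uniqueness statement requires identifying the residual ambiguity as exactly the one move of the figure (which arises because on $S^2$ a split component can sit in different complementary regions), and your sketch asserts this conclusion rather than deriving it. As it stands the proposal is a plausible outline of Khovanov's strategy with the two essential steps left as gaps.
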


\begin{figure}[H]
\includegraphics[height=3.2cm]{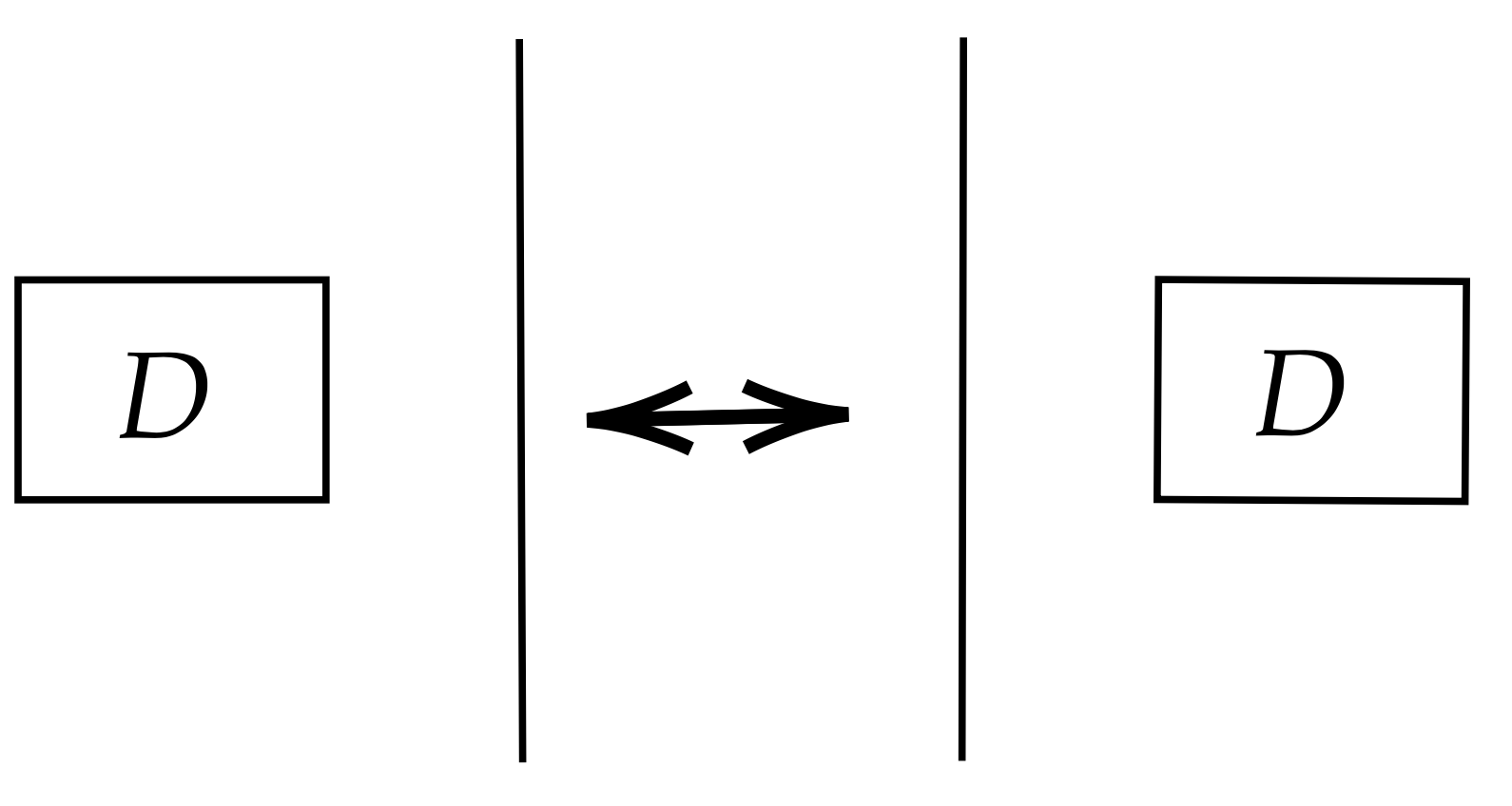}
\caption{Transformation of doodle diagrams.}
\label{doodle move}
\end{figure}

\begin{figure}[H]
\includegraphics[height=3.2cm]{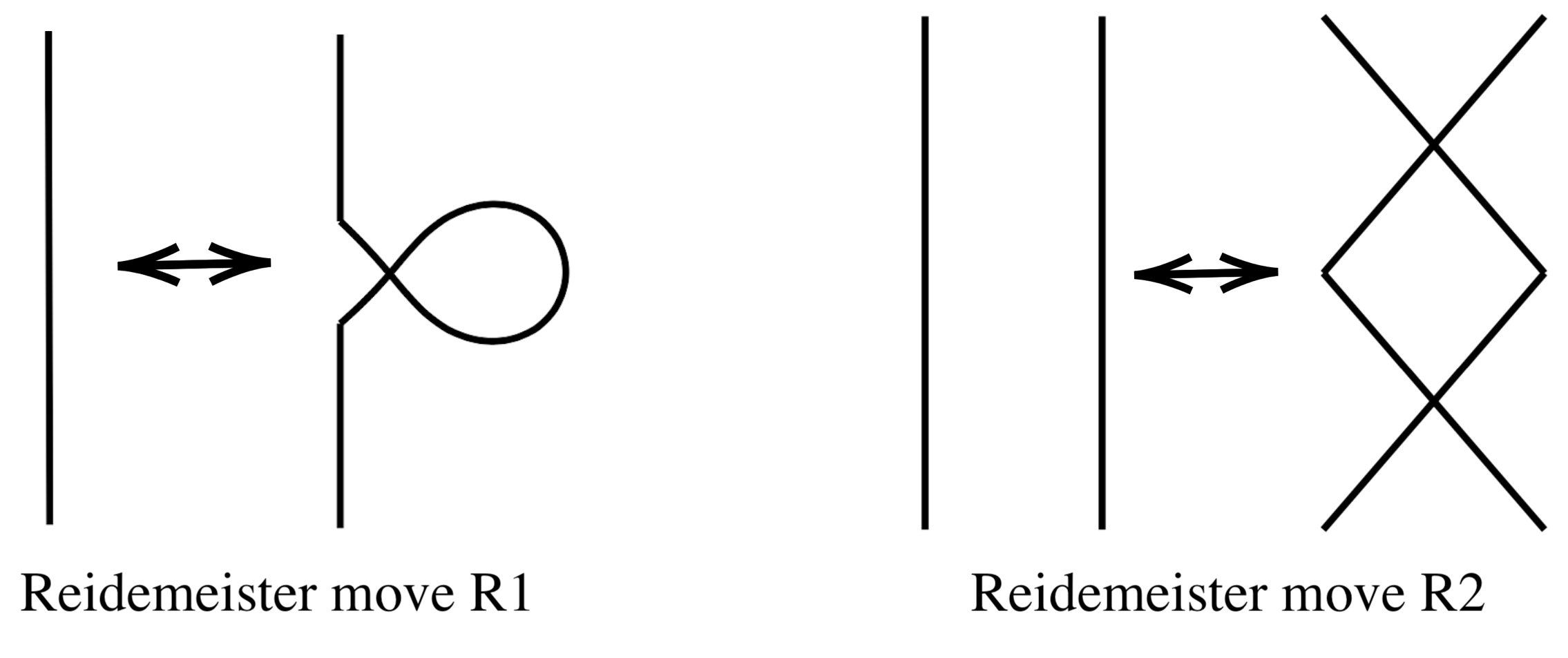}
\caption{Reidemeister moves.}
\label{doodle move}
\end{figure}

For a given reduced word $w=t_{i_1} \dots t_{i_k} \in T_n$, let $\ell(w)=k$ be the \textit{length} of $w$.  For each $1\leq i\leq n-1$, if $\log_{i}(w)$ denote the number of $t_i$'s present in the expression $w$, then
 $$\ell(w) =\sum_{i=1}^{n-1}\log_{i}(w).$$ 

A \textit{cyclic permutation} of a word  $w=t_{i_1}\dots t_{i_k} \in T_n$ (not necessarily reduced) is a word $w'=t_{i_r}t_{i_{r+1}} \dots t_{i_k}t_{i_1} t_{i_2}\cdots t_{i_{r-1}}$ for some $1\leq r\leq k$. It is easy to see that $w$ and $w'$ are conjugate to each other in $T_n$, in fact, $w'=(t_{i_1} t_{i_2}\dots t_{i_{r-1}})^{-1}w(t_{i_1} t_{i_2}\dots t_{i_{r-1}})$. A word $w$ is called \textit{cyclically reduced} if each cyclic permutation of $w$ is reduced. Clearly, a cyclically reduced word is reduced.

\begin{lemma}\label{BDL1}
Let $w \in PT_n$ be a pure twin. Then the following assertions hold:
\begin{enumerate}
\item If $\ell(w)$ is minimal among all the elements in the conjugacy class of $w$, then the closure of $w$ is a minimal doodle diagram.
\item The closure of $w$ is an $n$-component trivial doodle if and only if $w$ is a trivial twin.
 \end{enumerate}
\end{lemma}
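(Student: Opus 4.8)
The plan is to prove the two assertions separately, using the doodle-theoretic tools recalled just above (Theorem~\ref{T2.2}, minimality, cyclic reduction) together with the combinatorics of reduced words in the right-angled Coxeter group $T_n$.

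\textbf{Assertion (1).} First I would observe that minimality of a doodle diagram, by definition, means the absence of monogons and bigons. A monogon in the closure of a word $w$ would correspond to a generator $t_i$ appearing in isolation so that it can be cancelled by an $R1$ move; a bigon corresponds to two consecutive occurrences of the same generator $t_i t_i$ (up to commuting past far-away generators) that can be removed by an $R2$ move. The key point is that both of these configurations reduce the length $\ell$ of the word representing the twin, either directly or after a cyclic permutation (which, as noted in the excerpt, realizes conjugation in $T_n$). Concretely, a monogon/bigon in the \emph{closure} arises from a subword that is cancellable in $w$ or in some cyclic permutation of $w$. Hence I would argue the contrapositive: if the closure of $w$ is not minimal, then it contains a monogon or a bigon, which yields a shortening move $R1$ or $R2$; translating this move back to the algebra produces a conjugate of $w$ of strictly smaller length, contradicting the minimality of $\ell(w)$ in its conjugacy class. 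This uses exactly the structure of $T_n$ as a right-angled Coxeter group: the only ways to shorten a word are involutory cancellation $t_i^2=1$ (after using far-commutativity $t_it_j=t_jt_i$ for $|i-j|\ge 2$ to bring equal generators together). I would phrase the cancellation combinatorics in terms of $\log_i(w)$ and the far-commutativity relations, invoking Tits' solution of the word problem for Coxeter groups to guarantee that a non-cyclically-reduced representative always admits such a length-reducing move.

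\textbf{Assertion (2).} The ``if'' direction is immediate: the closure of the trivial twin is $n$ disjoint simple closed curves, which is the $n$-component trivial doodle. For ``only if,'' suppose the closure of $w\in PT_n$ is the $n$-component trivial doodle. Since $w$ is a \emph{pure} twin, its closure genuinely has $n$ components. Being a trivial doodle, its unique minimal diagram (Theorem~\ref{T2.2}) has no crossings. By part~(1), if I take a representative of minimal length in the conjugacy class of $w$, its closure is already a minimal diagram; by uniqueness of the minimal diagram up to the transformation of Figure~\ref{doodle move}, this minimal-length representative must have closure with zero intersection points, forcing $\ell(w)=0$ on the minimal representative. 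A word of length zero is the identity, so the minimal-length conjugate of $w$ is trivial, whence $w$ itself is trivial. The one subtlety I must handle is that the twin $w$ and its closure a priori live in different equivalence relations (homotopy of strands keeping endpoints fixed, versus doodle moves on the sphere including the extra transformation of Figure~\ref{doodle move}); I would note that for pure twins the closure is injective enough on minimal diagrams that vanishing of crossings in the closure forces triviality of the twin.

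\textbf{Main obstacle.} The hardest step is assertion~(1), specifically the faithful translation between a geometric reduction move ($R1$ removing a monogon, $R2$ removing a bigon) on the closed doodle diagram and an algebraic length-reduction (an application of $t_i^2=1$ after far-commutations) on a cyclic permutation of the word $w$. I must be careful that every monogon or bigon in the \emph{closure} really does come from a cancellable subword in \emph{some} cyclic rotation, and not from a configuration that is invisible in any single linear representative; controlling this requires a careful reading of how arcs close up around the braid axis and an appeal to the normal-form theory for right-angled Coxeter groups to certify that length-minimality in the conjugacy class coincides with cyclic reducedness, which is exactly the property that rules out monogons and bigons in the closure.
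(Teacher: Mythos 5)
Your overall strategy for both parts coincides with the paper's. For (1), the paper likewise identifies length-minimality in the conjugacy class with cyclic reducedness (citing the fact, from Naik--Nanda--Singh, that every element of $T_n$ is conjugate to a cyclically reduced word) and then observes that cyclic reducedness rules out bigons in the closure; for (2), it passes to a length-minimal conjugate (which has the same closure by Gotin's Markov theorem for doodles), applies part (1), and notes that the number of double points of the resulting minimal diagram equals $\ell(w)$, forcing $\ell(w)=0$ and hence $w=1$. Your part (2) is exactly this argument.

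The one step of yours that would fail as written is the treatment of monogons in part (1). You assert that a monogon in the closure ``corresponds to a generator $t_i$ appearing in isolation so that it can be cancelled by an $R1$ move,'' and that it therefore produces a length reduction contradicting minimality. This translation is false: a monogon is a \emph{self}-crossing of a single component, whereas every crossing $t_i$ in a twin diagram involves two strands at adjacent positions, and for a \emph{pure} twin these two strands close up to two distinct components of $\widehat{w}$. Hence no crossing of $\widehat{w}$ is ever a self-crossing, no monogon can occur, and no $R1$ move is available or needed; conversely, a monogon need not force any length reduction at all (the closure of $t_1\in T_2$, a non-pure twin, is a single curve with a monogon even though $t_1$ is already of minimal length in its conjugacy class). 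Since your contrapositive argument needs \emph{every} non-minimal configuration to yield a shortening, the monogon branch as you set it up is a genuine gap. The repair is exactly the paper's one-line observation: because $w$ is pure, its closure has no monogons; with that substitution, your bigon argument carries the rest of part (1) just as in the paper.
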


\begin{proof}
It follows from \cite[Corollary 2.4]{MR4145210} that each word in $T_n$ is conjugate to some cyclically reduced word. Since $\ell(w)$ is minimal among all the elements in the conjugacy class of $w$, it follows that $w$ is a cyclically reduced word. Hence, the closure of $w$ has no bigons. Since $w$ is pure twin, its closure has no monogons, and hence the diagram is minimal.
\par
By Markov Theorem for doodles on the 2-sphere \cite[Theorem 4.1]{Gotin}, conjugate twins have the same closure. Thus, we can assume that $\ell(w)$ is minimal among all the elements in the conjugacy class of $w$. It follows from assertion (1) that the closure of $w$ is a minimal doodle diagram. Note that the number of double points in the closure of the twin $w$ equals $\ell(w)$, and hence $\ell(w)=0$. But, this implies that $w$ is trivial twin. The converse implication in assertion (2) is obvious.
\end{proof}

Let $\widehat{w}$ denote the closure of a twin $w$ on the 2-sphere. By \cite[Theorem 2.1]{MR1370644}, every oriented doodle on the 2-sphere is the closure of a twin. The {\it twin index} $I(D)$ of a doodle $D$ on the 2-sphere is the minimal $n$ such that there is a twin $w \in T_n$ whose closure is equivalent to $D$.

\begin{theorem}\label{brunnian doodle twin index}
 An $m$-component Brunnian doodle $D$ on the 2-sphere is the closure of a Brunnian twin if and only if $I(D) = m$.
\end{theorem}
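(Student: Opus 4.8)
The plan is to prove both implications by relating the component-deletion operation on the doodle $D$ to the strand-deletion face maps $d_i$ on the twin group, using the minimal-length characterization from Lemma~\ref{BDL1}.

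\textbf{The forward direction.} Suppose $D$ is the closure of a Brunnian twin $w \in T_n$. First I would argue that $n = m$: since $w$ is Brunnian, it is in particular a pure twin, so its closure $\widehat{w}$ has exactly $n$ components, one per strand; as $D = \widehat{w}$ has $m$ components, we get $n = m$. It then remains to show $I(D) = m$, i.e. that $D$ cannot be realized as the closure of a twin on fewer than $m$ strands. The key observation is that deleting a strand from $w$ corresponds geometrically to deleting a component of the doodle $D$. Since $w$ is Brunnian, $d_i(w)$ is the trivial twin for every $i$, so deleting any single component of $D$ yields the $(m-1)$-component trivial doodle (by Lemma~\ref{BDL1}(2), applied on $m-1$ strands). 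I would then invoke the Brunnian hypothesis on $D$: an $m$-component Brunnian doodle becomes trivial after deleting any one component, which is consistent, and I would use a component-counting / minimality argument to conclude $I(D)$ cannot drop below $m$ — any realization on $k < m$ strands would force some pair of the $m$ components to lie on a common strand in a way incompatible with purity.

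\textbf{The converse direction.} Suppose $D$ is an $m$-component Brunnian doodle with $I(D) = m$. Choose a twin $w \in T_m$ realizing $D$ as its closure, and (using that conjugate twins have the same closure, via the Markov theorem \cite[Theorem 4.1]{Gotin}) arrange that $\ell(w)$ is minimal in its conjugacy class, so by Lemma~\ref{BDL1}(1) the closure is a minimal doodle diagram. Since $D$ has $m = I(D)$ components and each of the $m$ strands contributes at least one component to $\widehat w$, minimality of the twin index forces $w$ to be a \emph{pure} twin (otherwise two endpoints on distinct strands would be joined into a single component, reducing the component count below the strand count, contradicting $I(D)=m$). Now I would delete the $i$-th strand: the resulting doodle $\widehat{d_i(w)}$ is precisely $D$ with one component removed, which is trivial by the Brunnian hypothesis on $D$. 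By Lemma~\ref{BDL1}(2), a pure twin whose closure is the trivial doodle must itself be trivial, so $d_i(w) = 1$ for every $i$; hence $w \in \Brun(T_m)$, and $D = \widehat{w}$ is the closure of a Brunnian twin.

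\textbf{Main obstacle.} The delicate step is the purity claim in the converse and, correspondingly, the lower bound $I(D) \geq m$ in the forward direction: both hinge on the precise bookkeeping between strands of the twin and components of its closure, and on ruling out that a non-pure twin on exactly $m$ strands could still produce an $m$-component closure. I would handle this by the rule that a twin $w \in T_m$ whose underlying permutation $\nu(w)$ has $c$ cycles has a closure with exactly $c$ components; thus $\widehat w$ has $m$ components precisely when $\nu(w) = \id$, i.e. when $w$ is pure. This converts the purity question into an elementary statement about $\nu(w)$, after which the strand-deletion argument via Lemma~\ref{BDL1} closes both directions cleanly. A secondary point to verify carefully is that strand deletion on the twin genuinely corresponds to component deletion on the doodle, compatibly with the closure operation, so that $\widehat{d_i(w)}$ really is $D$ with the appropriate component removed.
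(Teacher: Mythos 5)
Your proposal is correct and follows essentially the same route as the paper: both directions reduce to the observation that deleting a strand of $w$ deletes the corresponding component of $\widehat{w}$, with Lemma~\ref{BDL1}(2) forcing $d_i(w)=1$, and with the component count (number of cycles of $\nu(w)$) pinning down both the purity of $w$ and the equality $I(D)=m$. Your write-up is in fact somewhat more explicit than the paper's about why the realizing twin must be pure and why $I(D)$ cannot drop below $m$, but these are the same counting facts the paper uses implicitly.
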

\begin{proof}
 If $u$ is a Brunnian twin on $m$ strands, then its closure on the 2-sphere is a Brunnian doodle on $m$ components with $I(\widehat{u})=m$. Conversely, if $D$ is a Brunnian doodle on $m$ components and $I(D)=m$, then there exist $w \in PT_m$ such that $\widehat{w}=D$. Removing any strand from $w$ corresponds to removing a component from $D$. Thus, $\widehat{d_i(w)}$ is a trivial doodle for each $i$. By Lemma \ref{BDL1}, $d_i(w)=1$ for each $i$, and hence $w$ is a Brunnian twin.
\end{proof}

\begin{remark}
An analogue of Theorem \ref{brunnian doodle twin index} for Brunnian links in $S^3$ is proved in \cite[Theorem 2.2]{MR1822143}.
\end{remark}
\medskip


\section{Simplicial  structure on pure twin groups}\label{section 6}
In this section, we discuss simplicial structures on twin and pure twin groups and relate them with Milnor's construction for simplicial spheres.

\subsection{Simplicial sets and simplicial groups}

We recall some basic definitions and constructions \cite{MR0222892, Milnor}.
\begin{definition}
 A sequence of sets $X_* = \{ X_n \}_{n \geq 0}$  is called a {\it simplicial set} if there are face maps
$$
d_i : X_n \longrightarrow X_{n-1} ~\mbox{for}~0 \leq i \leq n
$$
and degeneracy  maps
$$
s_i : X_n \longrightarrow X_{n+1} ~\mbox{for}~0 \leq i \leq n,
$$
which satisfy the following simplicial identities:
\begin{enumerate}
\item $d_i d_j = d_{j-1} d_i$ if $i < j$,
\item $s_i s_j = s_{j+1} s_i$ if $i \leq j$,
\item $d_i s_j = s_{j-1} d_i$ if $i < j$,
\item $d_j s_j = \id = d_{j+1} s_j$,
\item $d_i s_j = s_{j} d_{i-1}$ if $i > j+1$.
\end{enumerate}
\end{definition}

We view $X_n$ geometrically as the set of $n$-simplices including all possible degenerate simplices. Here, a simplex $x$ is {\it degenerate} if $x = s_i (y)$ for some simplex $y$ and degeneracy operator $s_i$, otherwise $x$ is {\it non-degenerate}.  A simplicial set $X_*$ is \textit{pointed} if we fix a basepoint $\star \in X_0$ that creates one and only one degenerate $n$-simplex in each $X_n$ by applying iterated degeneracy operations on it. A \textit{simplicial group}  is a simplicial set $X_*$ such that each $X_n$ is a group and all face and degeneracy maps are group homomorphisms.
 
\begin{remark} 
In the context of braid-type groups (for example, braid group $B_n$, virtual braid group $VB_n$, welded braid group $WB_n$, etc.), the maps $d_i$ usually represents deleting of the $(i+1)$-th strand and $s_i$ represents doubling of the $(i+1)$-th strand.
\end{remark}

\begin{remark}
Note that the defining identities  of a bi-$\Delta$-set and that of a simplicial set are  similar. The only differences are that we don't have  $d_{j+1} s_j= \id$ for bi-$\Delta$-sets, and when viewed as maps from $X_{n- 1} \to X_n$, the number of degeneracy maps is one less than the number of coface maps. We have used the bi-$\Delta$-set structure at three instances in the preceding sections. The first instance of usage of a bi-$\Delta$-set is Proposition \ref{bi-delta-set on tn}, though its arguments can be modified to adapt to a simplicial set structure. The second instance is the proof of Proposition \ref{surcptn}, where we defined the element $w_{k,n}$ and showed that $w_{k,n}\in CPT_n$. In the latter case, a simplicial structure would not be helpful. Finally, using the Decomposition Theorem for  bi-$\Delta$-groups, we have given a decomposition of pure twin groups in Proposition \ref{prop:ptndecomposition} with Brunnian subgroups as constituents.
\end{remark}

Let $G_* = \{ G_n \}_{n \geq 0}$ be a simplicial group. The group of \textit{Moore $n$-cycles} $Z_n(G_*)\leq G_n$ is defined by
$$
Z_n(G_*)=\bigcap_{i=0}^n\mathrm{Ker}(d_i\colon G_n\to G_{n-1})
$$
and the group of \textit{Moore $n$-boundaries} $B_n(G_*)\leq G_n$ is defined by
$$
B_n(G_*)=d_0\left(\bigcap_{i=1}^{n+1}\mathrm{Ker}(d_i\colon G_{n+1}\to G_n)\right).
$$
Simplicial identities guarantees that $B_n(G_*)$ is a (normal) subgroup of $Z_n(G_*)$ (see \cite[Proposition 4.1.3]{MR2188127} or \cite[Example 7.7]{MR2915498}). The $n$-th \textit{Moore homotopy group} $\pi_n(G_*)$ of $G_*$ is defined by
$$
\pi_n(G_*)=Z_n(G_*)/B_n(G_*).
$$
It is a classical result due to Moore \cite{moore} that $\pi_n(G_*) \cong \pi_n(|G_*|)$, where $|G_*|$ is the geometric realisation of $G_*$. A simplicial group $G_*$ is called {\it contractible} if $\pi_n (G_*) = 1$ for all $n>0$.

\medskip

 Milnor's $F[K]$ construction is the adjoint functor to the forgetful functor from the category of pointed simplicial groups to the category of pointed simplicial sets. For a given pointed simplicial set $K_* = \{ K_n, \star \}_{n \geq 0}$, Milnor's $F[K]$ construction is the simplicial group with $F[K]_n = F(K_n \setminus \star)$, the free group on $K_n \setminus \star$, with the face and the degeneracy maps induced from the face and degeneracy maps of $K_*$. It is well-known from \cite{Milnor} that there is weak homotopy equivalence
\begin{equation}\label{milnor weak homotopy}
|F[K]_*| \simeq \Omega\Sigma|K_*|,
\end{equation}
where $|X_*|$ denotes the geometric realisation of a simplicial set $X_*$. Here, $\Omega Z$ is the loop space of all based loops in a pointed topological space $Z$ and $\Sigma Z$  is the reduced suspension of $Z$.
\par

Consider the pointed simplicial 2-sphere $S^2 = \Delta[2] / \partial \Delta[2]$ with
$$
S^2_0 = \{ \star \},~~S^2_1 = \{ \star \},~~S^2_2 = \{ \star, \sigma \},~~S^2_3 = \{ \star, s_0 (\sigma),  s_1(\sigma), s_2 (\sigma)\}, \ldots,  S^2_n = \{ \star, x_{ij}~\mid~0 \leq i < j \leq n-1 \},
\ldots
$$
where $\sigma = (0,1,2)$ is the non-degenerate 2-simplex, $x_{ij} = s_{n-1} \ldots s_{j+1} \widehat{s_j} s_{j-1} \ldots s_{i+1} \widehat{s_i} s_{i-1} \ldots s_0 (\sigma)$ and $\widehat{s_k}$ means that the degeneracy map $s_k$ is omitted. Then $F[S^2]$ construction has the following terms:
\begin{align*}
F[S^2]_0 &= 1,\\
F[S^2]_1 & = 1, \\
F[S^2]_2 & = F(\sigma),\\
F[S^2]_3 & = F(s_0 (\sigma),  s_1 (\sigma), s_2 (\sigma)),\\
F[S^2]_4 & = F(s_1s_0 (\sigma),  s_2s_0 (\sigma), s_3s_0 (\sigma), s_2s_1 (\sigma), s_3s_1 (\sigma), s_3s_2 (\sigma)),\\
\vdots & \\
F[S^2]_n & = F(x_{ij};~~0 \leq i < j \leq n-1),\\
\vdots &
\end{align*}

For each $n \ge 2$, the group $F[S^2]_n$ is a free group of rank $n(n-1)/2$. In this construction of the simplicial 2-sphere, it is convenient to present the degeneracy map $s_i$ is a doubling of the $(i+1)$-th component and the face map $d_i$ as deletion of the $(i+1)$-th component. For example,
$$
s_0 (\sigma) = (0,0,1,2),\quad s_1 (\sigma) = (0,1,1,2), \quad s_2 (\sigma) = (0,1,2,2),
$$
$$
s_1 s_0 (\sigma) = (0,0,0,1,2), \quad s_2 s_0 (\sigma) = (0,0,1,1,2), \quad s_3 s_0 (\sigma) = (0,0,1,2,2),
$$
$$
s_2 s_1 (\sigma) = (0,1,1,1,2), \quad s_3 s_1 (\sigma) = (0,1,1,2,2), \quad s_3 s_2 (\sigma) = (0,1,2,2,2).
$$
The face and degeneracy maps are determined with respect to the standard simplicial identities for simplicial groups. For example, the first non-trivial face maps $d_i : F[S^2]_3 \to F[S^2]_2$ are given by
\begin{align*}
d_0 : s_0 (\sigma)  \mapsto \sigma, \quad & s_1 (\sigma)  \mapsto \star,  \quad s_2(\sigma)  \mapsto \star, \\
d_1 : s_0 (\sigma)  \mapsto \sigma, \quad  & s_1 (\sigma)  \mapsto \sigma,  \quad s_2 (\sigma)  \mapsto \star, \\
d_2 : s_0 (\sigma)  \mapsto \star, \quad  & s_1 (\sigma)  \mapsto \sigma,  \quad s_2 (\sigma)  \mapsto \sigma, \\
d_3 : s_0 (\sigma)  \mapsto \star, \quad  & s_1 (\sigma)  \mapsto \star, \quad s_2(\sigma)  \mapsto \sigma.
\end{align*}

Milnor's construction gives a possibility to define the homotopy groups $\pi_n(S^3)$ combinatorially, in terms of free groups. By \eqref{milnor weak homotopy}, the geometric realisation of $F[S^2]_*$ is weakly homotopically equivalent to the loop space $\Omega S^3$. Thus, the homotopy groups of $S^3$ are isomorphic to the Moore homotopy groups of $F[S^2]$, that is,
\begin{equation}\label{pi1s3 and moore homotopy}
\pi_{n+1}(S^3) \cong Z_n (F[S^2]_*) / B_n (F[S^2]_*).
\end{equation}
\medskip


\subsection{Simplicial pure twin group} \label{sec2}
By \cite[Theorem 2]{MR4027588}, we have $PT_3 = \langle (t_1 t_2)^3 \rangle \cong \mathbb{Z}$ and $PT_4 \cong F_7$, where $F_7$ is the free group on the elements
$$
x_1 = (t_1 t_2)^3,\quad x_2 = \left( (t_1 t_2)^3 \right)^{t_3}, \quad x_3 = \big( (t_1 t_2)^3\big)^{t_3 t_2},\quad x_4 = \left( (t_1 t_2)^3 \right)^{t_3 t_2 t_1},$$
$$ \quad x_5 = (t_2 t_3)^3,  \quad  x_6 = \left((t_2 t_3)^3 \right)^{t_1}, \quad x_7 = \big((t_2 t_3)^3\big)^{t_1 t_2}.
$$

 Let $SPT_*= \{SPT_n\}_{n \ge 0}$, where $SPT_n=PT_{n+1}$ for each $n\geq0$. Following the methodology of~\cite{MR2853222}, consider the sequence of groups
$$
 \ldots\ \begin{matrix}\longrightarrow\\[-3.5mm] \ldots\\[-2.5mm]\longrightarrow\\[-3.5mm]
\longleftarrow\\[-3.5mm]\ldots\\[-2.5mm]\longleftarrow \end{matrix}\ PT_4 \ \begin{matrix}\longrightarrow\\[-3.5mm]\longrightarrow\\[-3.5mm]\longrightarrow\\[-3.5mm]\longrightarrow\\[-3.5mm]\longleftarrow\\[-3.5mm]
\longleftarrow\\[-3.5mm]\longleftarrow
\end{matrix}\ PT_3\ \begin{matrix}\longrightarrow\\[-3.5mm] \longrightarrow\\[-3.5mm]\longrightarrow\\[-3.5mm]
\longleftarrow\\[-3.5mm]\longleftarrow \end{matrix}\ PT_2\ \begin{matrix} \longrightarrow\\[-3.5mm]\longrightarrow\\[-3.5mm]
\longleftarrow \end{matrix}\ PT_1
$$
with face and degeneracy homomorphisms
\begin{align*}
& d_i: SPT_n=PT_{n+1}\to SPT_{n-1}=PT_n,\\
& s_i: SPT_n=PT_{n+1}\to SPT_{n+1}=PT_{n+2},\
\end{align*}
where the face map $d_i$ is the deleting of the $(i+1)$-th strand and the degeneracy map $s_i$ is the doubling of the $(i+1)$-th strand for each $0 \le i \le n$. For example, we prove in the proof of Proposition \ref{brun t4} that $d_3 : PT_4 \to PT_3$ is given by
$$
d_3(x_1) = y \quad \textrm{and} \quad d_3(x_2) = d_3(x_3) = d_3(x_4) = d_3(x_5) = d_3(x_6) = d_3(x_7) =1,
$$
where $y=(t_1 t_2)^3 \in PT_3$. As in the classical case it is not difficult to prove the following result, whose proof is adapted from \cite[Proposition 3.1]{BW2}.

\begin{proposition}
$SPT_*$ is a contractible simplicial group.
\end{proposition}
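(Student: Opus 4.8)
My plan is to first confirm that $SPT_*=\{PT_{n+1}\}_{n\ge 0}$ really is a simplicial group and then to contract it by means of an \emph{extra degeneracy}. For the simplicial group structure I would check that the deletion faces $d_i$ and the doubling degeneracies $s_i$ are group homomorphisms satisfying the five simplicial identities. The face maps are homomorphisms on pure twins by Proposition~\ref{bi-delta-set on tn}, and the face--face identity $d_id_j=d_{j-1}d_i$ (for $i<j$) is just a relabelling of \eqref{face map identity}. That doubling the $(i+1)$-th strand is a homomorphism $PT_{n+1}\to PT_{n+2}$ uses that a pure twin returns each strand to its initial position, so doubling is compatible with stacking; the identities involving the $s_i$ are then read off from the diagrammatic meaning of deletion and doubling, exactly as in the classical braid case \cite{MR2853222}. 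Note also that $SPT_0=PT_1$ and $SPT_1=PT_2$ are trivial, so $\pi_0(SPT_*)=1$ holds automatically.

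The key device will be an extra degeneracy. For each $n\ge 0$ I would introduce the map $s_{-1}\colon PT_{n+1}\to PT_{n+2}$ that adjoins a \emph{trivial} strand on the far left, relabelling the existing strands $1,\dots,n+1$ as $2,\dots,n+2$. Adjoining a trivial strand to a stack of twins is the same as adjoining it to each factor, so $s_{-1}$ is an injective group homomorphism landing in pure twins. From the geometry of deleting and adding strands I would record the identities
$$
d_0\,s_{-1}=\id,\qquad d_i\,s_{-1}=s_{-1}\,d_{i-1}\quad(1\le i\le n+1),
$$
since deleting the newly adjoined leftmost strand recovers the original twin, while deleting any other strand commutes with first adjoining the trivial strand on the left.

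These identities contract the Moore complex directly, with no appeal to a black-box theorem. Fix $n\ge 1$ and let $z\in Z_n(SPT_*)=\bigcap_{i=0}^{n}\ker(d_i)$. For $1\le i\le n+1$ we get $d_i\,s_{-1}(z)=s_{-1}\,d_{i-1}(z)=s_{-1}(1)=1$, because $0\le i-1\le n$ and $z$ lies in every such $\ker(d_{i-1})$; hence $s_{-1}(z)\in\bigcap_{i=1}^{n+1}\ker(d_i)$. Since also $d_0\,s_{-1}(z)=z$, we conclude
$$
z=d_0\big(s_{-1}(z)\big)\in d_0\Big(\bigcap_{i=1}^{n+1}\ker(d_i)\Big)=B_n(SPT_*).
$$
Thus $Z_n(SPT_*)\subseteq B_n(SPT_*)$, and as the reverse inclusion always holds we obtain $\pi_n(SPT_*)=Z_n(SPT_*)/B_n(SPT_*)=1$ for all $n\ge 1$; together with $\pi_0(SPT_*)=1$ this shows $SPT_*$ is contractible. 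The hard part will not be this two-line contraction but the bookkeeping of the first paragraph: verifying faithfully that the doubling maps $s_i$ are well-defined homomorphisms on $PT_{n+1}$ and that, together with the deletion faces and the extra degeneracy, they satisfy all the required identities — that is, translating the planar diagrammatics into algebra without error.
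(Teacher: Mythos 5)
Your proof is correct and follows essentially the same route as the paper: the paper also contracts $SPT_*$ by the extra degeneracy that adjoins a trivial strand on the left (called $\iota_{n+1}$ there, your $s_{-1}$), verifying $d_0\iota_{n+1}=\id$ and $d_j\iota_{n+1}(x)=1$ for $j\ge 1$ when $x$ is a Moore cycle, hence every cycle is a boundary. Your write-up is somewhat more explicit about the identities $d_i s_{-1}=s_{-1}d_{i-1}$ and about checking the simplicial group structure, but the argument is the same.
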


\begin{proof}
Let $x \in Z_n(SPT_*)$ be a Moore $n$-cycle, that is, $x \in SPT_n$ and $d_i(x) = 1$ for all $0 \le i \le n$. Note that $SPT_*$ admits an additional degeneracy map $\iota_{n+1}: SPT_{n} \rightarrow SPT_{n+1}$, which adds a trivial strand on the left of the diagram of the twin. If we set $y = \iota_{n+1}(x) \in SPT_{n+1}$, then we see that
$d_j(y) = 1$ for all $1 \le j \le n+1$ and $d_0(y) = x$. Thus, $x \in B_n(SPT_*)$ is a Moore $n$-boundary, and hence $\pi_n(SPT_*) = 1$ for all $n$.
\end{proof}

We write $U_{n,i} := \mathrm{Ker}(d_i : PT_n \to PT_{n-1})$ for each $0 \le i \le n-1$. Then, we have the following short exact sequence
$$
\begin{tikzcd}
	1 & {U_{n,i}} & {PT_n} & {PT_{n-1}} & 1
	\arrow[from=1-1, to=1-2]
	\arrow[from=1-2, to=1-3]
	\arrow["{d_i}", from=1-3, to=1-4]
	\arrow[from=1-4, to=1-5]
\end{tikzcd}
$$
with the splitting given by $d^i: PT_{n-1}\to PT_n$ as defined in Proposition \ref{bi-delta-set on tn}. This gives a  semi-direct product  decomposition $PT_n = U_{n,i} \rtimes PT_{n-1}$. Clearly, $U_{3,0}=U_{3,1}=U_{3,2}=PT_3$. The following problem seems interesting.

\begin{problem}
Find presentations of $U_{n,i}$ for $n \geq 4$.
\end{problem}

We construct a simplicial subgroup $K_*$ of $SPT_*$ which would be the image of the simplicial sphere $S^2$ under a simplicial map. Put $K_0 = K_1 = 1$, $K_2 = SPT_2 = \langle c_{111} \rangle$, the infinite cyclic group generated by $c_{111}= (t_1 t_2)^3$, and
$$
K_3 =  \langle c_{211} = s_0 (c_{111}),~~ c_{121} = s_1 (c_{111}),~~c_{112} = s_2 (c_{111}) \rangle.
$$
 In general, we define
$$
K_n = \langle c_{klm} = s_{n-1} \ldots s_{j+1} \widehat{s_j} s_{j-1} \ldots s_{i+1} \widehat{s_i} s_{i-1} \ldots s_0 (c_{111})~\mid~0 \leq i < j \leq n-1,~~k+l+m = n+1 \rangle,
$$
the subgroup of $SPT_n$ generated by $n(n-1)/2$ elements. It follows from the simplicial identities that $d_i(c_{klm})\in K_{n-1}$ and  $s_j(c_{klm})\in K_{n+1}$ for each generator $c_{klm}$ of $K_n$ and all $d_i, s_j$. Thus,
for each $n \ge 0$, restriction of face maps $d_i: SPT_n \to SPT_{n-1}$ gives face maps $d_i:K_n \to K_{n-1}$. Similarly, restriction of degeneracy maps $s_i: SPT_n \to SPT_{n+1}$ induce degeneracy maps $s_i: K_n \to K_{n+1}$, turning $K_*=\{K_n\}_{n \ge 0}$ into a simplicial subgroup of $SPT_*$. 

\begin{theorem}\label{k3 and k4}
$K_3 \cong F[S^2]_3$ and  $K_4 \cong F[S^2]_4$.
\end{theorem}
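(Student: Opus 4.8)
The plan is to identify $K_n$ with the image $\Theta(F[S^2]_n)$ of the degree-$n$ piece of the simplicial homomorphism $\Theta\colon F[S^2]_*\to SPT_*$ sending $\sigma\mapsto c_{111}=(t_1t_2)^3$. Since $\Theta$ commutes with the degeneracy maps, it carries the free basis $\{x_{ij}\}$ of $F[S^2]_n$ onto the generating set $\{c_{klm}\}$ of $K_n$, so $\Theta$ restricts to a surjection $F[S^2]_n\twoheadrightarrow K_n$. As $F[S^2]_3$ and $F[S^2]_4$ are free of ranks $3$ and $6$, it suffices to prove that $K_3$ and $K_4$ are free of the same ranks: granting this, $\Theta$ is a surjection between free groups of equal finite rank, and composing with an isomorphism $K_n\cong F[S^2]_n$ yields a surjective endomorphism of a finitely generated free group, which is an isomorphism by the Hopfian property. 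Hence $\Theta\colon F[S^2]_n\to K_n$ is injective, giving the asserted isomorphisms.

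First I would settle freeness. Since $K_3\le SPT_3=PT_4\cong F_7$ \cite{MR4027588} and a subgroup of a free group is free, $K_3$ is free; being generated by the three elements $s_0(c_{111}),s_1(c_{111}),s_2(c_{111})$, its rank is at most $3$. Likewise $K_4\le SPT_4=PT_5$, and since $PT_5$ is free \cite{MR4027588}, $K_4$ is free of rank at most $6$.

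It remains to bound the ranks from below by exhibiting the generators as independent in the abelianisation. For $K_3$ this is immediate from the simplicial structure: because $\Theta$ commutes with the face maps and $K_2=PT_3\cong\mathbb{Z}\cong F[S^2]_2$, the homomorphism $(d_0,d_1,d_2,d_3)\colon K_3\to (PT_3)^4\cong\mathbb{Z}^4$ sends the three generators to the vectors $(1,1,0,0),(0,1,1,0),(0,0,1,1)$, read off from the face maps of $F[S^2]_3$ recorded above; these are linearly independent, so $K_3^{\mathrm{ab}}$ has rank at least $3$ and hence $K_3\cong F[S^2]_3$. For $K_4$ I would proceed analogously, now transporting independence through the ladder of commuting squares $\Theta d_k=d_k\Theta$ and the already-established isomorphism $K_3\cong F[S^2]_3$: composing the five face maps $d_k\colon K_4\to K_3$ with the abelianisation $K_3^{\mathrm{ab}}\cong\mathbb{Z}^3$ gives a homomorphism $K_4\to\mathbb{Z}^{15}$ whose values on the six generators coincide with the corresponding values in the Milnor model $F[S^2]_*$, and a direct check with the simplicial identities shows these six vectors are linearly independent. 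This forces $\mathrm{rank}(K_4)=6$ and thus $K_4\cong F[S^2]_4$.

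The conceptual content is light; the main obstacle is bookkeeping. In degree $4$ one must either compute the double strand-doublings $s_as_b(c_{111})$ explicitly as pure twins in $PT_5$ and abelianise them, or verify the linear independence of the six abelianised face-vectors in the Milnor model, and one must invoke the (known, but nontrivial) freeness of $PT_5$. An alternative that sidesteps the latter is to check directly that the six generating words are Nielsen-reduced, which certifies both the freeness of $K_4$ and its rank simultaneously; either way the theorem follows once linear independence of the generators modulo commutators is confirmed.
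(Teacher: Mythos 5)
Your argument is correct, and for the substantive half of the theorem it takes a genuinely different route from the paper. The paper establishes the rank lower bounds by brute force: it writes the three generators of $K_3$ explicitly as $x_3x_5$, $x_6x_2$, $x_1x_7$ in the free basis of $PT_4\cong F_7$, and for $K_4$ it produces a $31$-element free generating set of $PT_5$ and expresses the six generators of $K_4$ as words in it; in each case the words have pairwise disjoint support in the ambient free basis, which immediately certifies a free basis of the stated rank. You instead keep the upper bound the same (Nielsen--Schreier inside the free groups $PT_4$ and $PT_5$, rank bounded by the number of generators) but obtain the lower bound structurally, by pushing the generators through the face maps and abelianising: for $K_3$ the map to $(PT_3)^4\cong\mathbb{Z}^4$ yields the independent vectors $(1,1,0,0)$, $(0,1,1,0)$, $(0,0,1,1)$, and for $K_4$ the map to $(K_3^{\mathrm{ab}})^5\cong\mathbb{Z}^{15}$ yields six vectors that one checks are independent block by block (the first block already kills the coefficients of the three $s_ks_0$-generators, the second block the next two, the third block the last). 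This works because the relevant values of $d_ks_as_b(c_{111})$ are forced by the simplicial identities together with $d_i(c_{111})=1$, so they agree with the combinatorics of the Milnor model; no word computation in $PT_5$ is needed. What your approach buys is the avoidance of the $31$-generator computation (the least pleasant part of the paper's proof) and a template that could plausibly be iterated to attack $K_n$ for $n\ge 5$, i.e.\ the injectivity conjecture for $\Theta$; what it costs is that it still relies on the external input that $PT_5$ is free (which the paper attributes to Gonz\'alez--Le\'on-Medina--Roque-M\'arquez rather than to the reference you cite --- a minor attribution slip), whereas the paper's explicit words also serve as usable data elsewhere. Your closing remark that Hopficity converts the equal-rank surjection into an isomorphism is a clean way to finish, and your alternative of checking the generating words are Nielsen-reduced is in effect what the paper's disjoint-support observation accomplishes.
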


\begin{proof}
Using the geometrical interpretation of $c_{111}$ (see Figure \ref{pure twin in pt3}) and degeneracy maps $s_i$, we write the generators of $K_3$ in terms of the generators of $PT_4$ as follows:
$$
c_{211} = (t_2 t_1 t_3 t_2 ) (t_1 t_2 t_3 )( t_1 t_2 t_3) = \left( (t_1 t_2)^3 \right)^{t_3 t_2}  (t_2 t_3)^3 = x_3 x_5,
$$
$$
c_{121} = (t_1 t_2 t_3 )( t_2 t_1 t_3 t_2 )( t_1 t_2 t_3) = \left( (t_2 t_3)^3 \right)^{t_1}  \left( (t_1 t_2)^3 \right)^{t_3} = x_6 x_2,
$$
$$
c_{112} =  (t_1 t_2 t_3 )( t_1 t_2 t_3 )( t_2 t_1 t_3 t_2) = (t_1 t_2)^3 \left( (t_2 t_3)^3 \right)^{t_1 t_2}  = x_1 x_7.
$$
Since $PT_4$ is a free group of rank 7, it follows that $K_3$ is a free group of rank 3, and hence $K_3 \cong F[S^2]_3$. It is known from \cite[Section 5]{MR4170471} that $SPT_4 = PT_5$ is free group of rank 31, but \cite{MR4170471} does not give any free generating set for $PT_5$. However, using \cite[Theorem 4]{MR4027588}, we obtain a generating set for $PT_5$ of cardinality 43. By removing the redundant generators, we obtain the following minimial generating set for $PT_5$: 
\begin{multicols}{3}
\begin{enumerate}
\item[] $a_{1}=(t_{1} t_{2}\big)^{3}$
\item[] $a_{2}=\big((t_{1} t_{2})^{3}\big)^{t_{3}}$
\item[] $a_{3}=\big((t_{1} t_{2})^{3}\big)^{t_{3} t_{2}}$
\item[] $a_{4}=\big((t_{1} t_{2})^{3}\big)^{t_{3} t_{2} t_{1}}$
\item[] $a_{5}=\big((t_{1} t_{2})^{3}\big)^{t_{3} t_{2} t_{1} t_{4} t_{3} t_{2}}$
\item[] $a_{6}=\big((t_{1} t_{2})^{3}\big)^{t_{3} t_{2} t_{1}t_{4} t_{3}}$
\item[] $a_{7}=\big((t_{1} t_{2})^{3}\big)^{t_{3} t_{2} t_{1} t_{4}}$
\item[] $a_{8}=\big((t_{1} t_{2})^{3}\big)^{t_{3} t_{2} t_{4} t_{3}}$
\item[] $a_{9}=\big((t_{1} t_{2})^{3}\big)^{t_{3} t_{4} t_{3} t_{2}}$
\item[] $a_{10}=\big((t_{1} t_{2})^{3}\big)^{t_{3} t_{4}}$ 
\item[] $a_{11}=(t_{2} t_{3})^{3}$
\item[] $a_{12}=\big((t_{2} t_{3})^{3}\big)^{t_{1}}$
\item[] $a_{13}=\big((t_{2} t_{3})^{3}\big)^{t_{1} t_{2}} $
\item[]  $a_{14}=\big((t_{2} t_{3})^{3}\big)^{t_{4} t_{3} t_{2} t_{1}}$
\item[] $a_{15}=\big((t_{2} t_{3})^{3}\big)^{t_{4} t_{3} t_{2}}$
\item[] $a_{16}=\big((t_{2} t_{3})^{3}\big)^{t_{4} t_{3}}$
\item[] $a_{17}=\big((t_{2} t_{3})^{3}\big)^{t_{4}}$
\item[] $a_{18}=\big((t_{2} t_{3})^{3}\big)^{t_{1} t_{2} t_{4} t_{3} t_{2} t_{1}}$
\item[] $a_{19}=\big((t_{2} t_{3})^{3}\big)^{t_{1} t_{2} t_{4} t_{3} t_{2}}$
\item[] $a_{20}=\big((t_{2} t_{3})^{3}\big)^{t_{1} t_{2} t_{4} t_{3}}$
\item[] $a_{21}=\big((t_{2} t_{3})^{3}\big)^{t_{1} t_{2} t_{4}}$
\item[] $a_{22}=\big((t_{2} t_{3})^{3}\big)^{t_{1} t_{4} t_{3} t_{2} t_{1}}$
\item[] $a_{23}=\big((t_{2} t_{3})^{3}\big)^{t_{1} t_{4} t_{3} t_{2}}$
\item[] $a_{24}=\big((t_{2} t_{3})^{3}\big)^{t_{1} t_{4} t_{3} }$
\item[] $a_{25}=\big((t_{2} t_{3})^{3}\big)^{t_{1}t_{4}}$
\item[] $a_{26}=\big((t_{3} t_{4})^{3}\big)$
\item[] $a_{27}=\big((t_{3} t_{4})^{3}\big)^{t_{2}t_{1}t_{3}t_{2}}$
\item[] $a_{28}=\big((t_{3} t_{4})^{3}\big)^{t_{2}t_{1}t_{3}}$
\item[] $a_{29}=\big((t_{3} t_{4})^{3}\big)^{t_{2}t_{1}}$
\item[] $a_{30}=\big((t_{3} t_{4})^{3}\big)^{t_{2}t_{3}}$
\item[] $a_{31}=\big((t_{3} t_{4})^{3}\big)^{t_{2}}$
\end{enumerate}
\end{multicols} 

By definition, we have $K_4 = \langle s_1s_0(c_{111}), s_2s_0(c_{111}), s_3s_0(c_{111}), s_2s_1(c_{111}), s_3s_1(c_{111}),  s_3s_2(c_{111}) \rangle.$ Direct calculation gives
\begin{eqnarray*}
s_1(x_3 x_5) = a_8 a_{16} a_{26}, && s_2(x_3 x_5) =a_{23} a_9 a_{31} a_{17},\\
s_3(x_3 x_5) = a_3 a_{19} a_{11} a_{30}, && s_2(x_6 x_2) = a_{29} a_{25} a_{10},\\
s_3(x_6 x_2) =a_{12} a_{28} a_2 a_{20}, && s_3(x_1x_7) = a_1 a_{13} a_{27}.
\end{eqnarray*} 
Thus,  $K_4$ is free of rank 6, and hence $K_4 \cong F[S^2]_4$.
\end{proof}

\begin{problem} \label{q2}
Determine a presentation of $K_n$ for $n \geq 4$.
\end{problem}

We  consider $c_{111}$ as a 2-simplex in the simplicial group $SPT_*$. Since $d_0 (c_{111}) = d_1 (c_{111} )= d_2 (c_{111}) = 1$,
there is a (unique) simplicial map
$$
\theta : S^2 \to SPT_*
$$
such that $\theta(\sigma)=c_{111}$, where $\sigma=(0,1,2)$ is the non-degenerate 2-simplex of the simplicial sphere $S^2$. By Milnor's  construction, the simplicial map $\theta$ extends uniquely to a simplicial homomorphism
$$
\Theta : F[S^2]_* \longrightarrow  SPT_*.
$$
We note that  $K_*=\Theta(F[S^2]_*)$ and it is the smallest simplicial subgroup of $SPT_*$ containing $c_{111}$. Further, by Proposition \ref{k3 and k4},
$$
\Theta_n : F[S^2]_n \longrightarrow SPT_n
$$
is injective for $n\leq 4$. If each $\Theta_n:F[S^2]_n \to SPT_n$ is injective, then by \eqref{pi1s3 and moore homotopy}, we have 
$$\pi_{n+1} (S^3) \cong Z_n(F[S^2]_*)/B_n(F[S^2]_*) \cong Z_n(K_*)/B_n(K_*) \cong \pi_n (K_*).$$  Thus, if $\Theta$ is injective, then we can describe $\pi_{n+1}(S^3)$ as a quotient of a subgroup of 
$PT_{n+1}$. For instance, the generator of $\pi_3(S^3)\cong \mathbb Z$ can be represented by the pure twin $(t_1t_2)^3$.

 It appears that the following holds.

\begin{conjecture}
$\Theta : F[S^2]_* \longrightarrow  K_*$ is an isomorphism.
\end{conjecture}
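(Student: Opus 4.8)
The plan is to prove the equivalent statement that each component map $\Theta_n\colon F[S^2]_n \to K_n$ is injective. Since $K_* = \Theta(F[S^2]_*)$ by construction, $\Theta$ is already surjective onto $K_*$, so injectivity is all that remains. Now $\Theta_n$ carries the free basis $\{x_{ij}\mid 0\le i<j\le n-1\}$ of $F[S^2]_n\cong F_{n(n-1)/2}$ onto the $n(n-1)/2$ generators $c_{klm}$ of $K_n$ in a one-to-one fashion, so $\Theta_n$ is injective precisely when $K_n$ is free of rank $n(n-1)/2$ on the $c_{klm}$. I would split this into two tasks: (a) $K_n$ is a free group; and (b) the classes $[c_{klm}]$ are linearly independent in the abelianization $PT_{n+1}^{\mathrm{ab}}$. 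Granting both, the image of $H_1(\Theta_n)$ has rank $n(n-1)/2$, forcing $\mathrm{rank}(K_n)\ge n(n-1)/2$, while surjectivity from $F_{n(n-1)/2}$ forces $\mathrm{rank}(K_n)\le n(n-1)/2$; hence equality, and a surjection between free groups of equal finite rank is an isomorphism by Hopficity. Task (b) is a finite computation in the (readily computable) abelianization of $PT_{n+1}$, so the essential content lies in (a).

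To attack (a) I would induct on $n$, the cases $n\le 4$ being Theorem \ref{k3 and k4}. For the inductive step I would exploit the simplicial structure: by identity (4) for a simplicial set, $d_{n-1}s_{n-1}=d_n s_{n-1}=\id$, so the degeneracy $s_{n-1}\colon K_{n-1}\to K_n$ is a common section of the face maps $d_{n-1}$ and $d_n$. In particular $d_n\colon K_n\to K_{n-1}$ is a split surjection, giving a semidirect decomposition $K_n \cong N_n \rtimes s_{n-1}(K_{n-1})$ with $N_n = K_n\cap \ker(d_n\colon PT_{n+1}\to PT_n)$. Since $\ker(d_n)$ is free (as established in the proof of Proposition \ref{brun tn free}), its subgroup $N_n$ is free by Nielsen--Schreier, and $s_{n-1}(K_{n-1})\cong K_{n-1}$ is free by the inductive hypothesis. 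The aim of the step is to upgrade this to freeness of the extension $K_n$ itself.

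The main obstacle is exactly this upgrade: a split extension of a free group by a free group need not be free, so freeness of $N_n$ and $K_{n-1}$ does not by itself yield freeness of $K_n$. For $n\le 4$ the difficulty was invisible because the ambient group $PT_{n+1}$ is itself free ($PT_4\cong F_7$, $PT_5\cong F_{31}$), whence $K_n\le PT_{n+1}$ is free automatically by Nielsen--Schreier. For $n+1\ge 6$ no such free ambient group is available: an explicit free basis of $PT_{n+1}$ is not known, and $PT_{n+1}$ is not expected to be free. Thus the decisive problem is to realize $K_n$ inside a free subgroup of $PT_{n+1}$, or to prove freeness of $K_n$ intrinsically.

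I see two routes forward. The first is to run the Reidemeister--Schreier method on the split extension $K_n\cong N_n\rtimes K_{n-1}$: if one can exhibit an explicit free generating set of $N_n$ on which the free group $K_{n-1}$ acts without fixing any nonempty reduced word --- so that the resulting presentation has a free family of relators --- the extension is free. This is the natural generalization of the explicit basis computations of Theorem \ref{k3 and k4}, and it could be organized along the iterated free decomposition of $PT_{n+1}$ furnished by Proposition \ref{prop:ptndecomposition}, whose constituents are all free. The second route is homotopical: use the contractibility of $SPT_*$ and the inclusion of Moore complexes $Z_*(K_*)\hookrightarrow Z_*(SPT_*)$ to check the isomorphism $\pi_n(K_*)\cong \pi_{n+1}(S^3)$ predicted by \eqref{pi1s3 and moore homotopy}, and to leverage this rigidity against a hypothetical nontrivial simplicial kernel $\ker\Theta$. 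In either approach the crux --- and the part I expect to resist a short argument --- is establishing freeness of $K_n$ for $n\ge 5$, which is precisely the phenomenon the conjecture isolates.
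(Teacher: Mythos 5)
You are attempting to prove a statement that the paper itself records only as a \emph{conjecture}: there is no proof in the paper to compare against, and your proposal does not close the gap either. Your reduction is sound as far as it goes: surjectivity onto $K_*$ is by construction, injectivity of $\Theta_n$ is equivalent to $K_n$ being free of rank $n(n-1)/2$, and the Hopficity argument correctly converts ``free of the right rank'' into ``isomorphism.'' You also correctly locate the crux in the freeness and rank of $K_n$ for $n\ge 5$. But that crux is exactly the content of the conjecture, and neither of your two routes is carried out. The Reidemeister--Schreier route stalls at the point you flag: a semidirect product $N_n\rtimes K_{n-1}$ of free groups is typically not free (already $F_2\rtimes\mathbb{Z}$ need not be), and ``$K_{n-1}$ acts without fixing a nonempty reduced word'' is not a criterion that yields freeness; you would need explicit control of a basis of $N_n=K_n\cap\ker(d_n)$ and of the $K_{n-1}$-action on it, none of which is available for $n\ge 5$. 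The homotopical route is circular in the form stated: the isomorphism $\pi_n(K_*)\cong\pi_{n+1}(S^3)$ is a \emph{consequence} of injectivity of $\Theta$, not a tool for proving it, and contractibility of $SPT_*$ gives no information about $\ker\Theta$.

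Two further points weaken the framework itself. First, your task (b) is asserted to be ``a finite computation in the readily computable abelianization of $PT_{n+1}$,'' but no presentation of $PT_{n+1}$ is known for $n+1\ge 7$ (the paper poses presentations of the related kernels $U_{n,i}$ and of $K_n$ itself as open problems), so $PT_{n+1}^{\mathrm{ab}}$ is not available. Second, (b) is only a sufficient condition: elements can freely generate a subgroup while being linearly dependent in the ambient abelianization (e.g.\ $\langle a, bab^{-1}\rangle$ in $F(a,b)$), so even if the conjecture is true your criterion could fail, and its failure would tell you nothing. In short, your proposal is a reasonable reduction and a correct diagnosis of where the difficulty lies, but it is not a proof; the low-degree cases $n\le 4$ that you (and the paper, in Theorem \ref{k3 and k4}) can handle rely on the accidental freeness of $PT_4$ and $PT_5$, and no mechanism is supplied for $n\ge 5$.
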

\medskip

\begin{ack}
{\rm Valeriy Bardakov is supported by the state contract of the Sobolev Institute of Mathematics, SB RAS (No. I.1.5, Project FWNF-2022-0009). Pravin Kumar is supported by the PMRF fellowship at IISER Mohali. Mahender Singh is supported by the Swarna Jayanti Fellowship grants DST/SJF/MSA-02/2018-19 and SB/SJF/2019-20/04.}
\end{ack}
\medskip

\section{declaration}
The authors declare that there is no data associated to this paper and that there are no conflicts of interests.

\bibliographystyle{plain}
\bibliography{template}

\end{document}